\numberwithin{equation}{section}
\newcommand{\R}{\mathbb{R}}
\newcommand{\dis}{\displaystyle}
\renewcommand{\theequation}{\arabic{section}.\arabic{equation}}
\newtheorem{Thm}{Theorem}[section]
\newtheorem{Lem}[Thm]{Lemma}
\newtheorem{Prop}[Thm]{Proposition}
\newtheorem{Rem}[Thm]{Remark}
\begin{document}

\title[Neumann problem for the Lane-Emden System]{Sign-changing  solutions to the slightly supercritical Lane-Emden system with Neumann boundary conditions}

\author{Qing Guo, \,\, Shuangjie Peng}
 \address[Qing Guo]{College of Science, Minzu University of China, Beijing 100081, China} \email{guoqing0117@163.com}


\address[Shuangjie Peng]{ School of Mathematics and  Statistics, Central China Normal University, Wuhan, P.R. China}\email{ sjpeng@ccnu.edu.cn}

\keywords {
Lane-Emden systems; Critical and supercritical exponents;  Multi-bubbling solutions; Reduction method; Nonlinear elliptic boundary value problem}

\date{\today}

\begin{abstract}

We consider the following slightly supercritical problem for the Lane-Emden system with Neumann boundary conditions:
\begin{equation*}
\begin{cases}
-\Delta u_1=|u_2|^{p_\epsilon-1}u_2,\  &in\ \Omega,\\
-\Delta u_2=|u_1|^{q_\epsilon-1}u_1, \  &in\ \Omega,\\
\partial_\nu u_1=\partial_\nu u_2=0,\ &on\ \partial\Omega
\end{cases}
\end{equation*}
where $\Omega=B_1(0)$ is the unit ball in $\R^n$ ($n\geq4$) centered at the origin, $p_\epsilon=p+\alpha\epsilon, q_\epsilon=q+\beta\epsilon$ with $\alpha,\beta>0$ and $\frac1{p+1}+\frac1{q+1}=\frac{n-2}n$. 
We show the existence and multiplicity of concentrated solutions based on the Lyapunov-Schmidt reduction argument incorporating the zero-average condition by certain symmetries.

It is worth noting that we simultaneously consider two cases: $p>\frac n{n-2}$ and $p<\frac n{n-2}$. The coupling mechanisms of the system are completely different in these different cases, leading to significant changes in the behavior of the solutions. The research challenges also vary. Currently, there are very few papers that take both ranges into account when considering solution construction. Therefore, this is also the main feature and  new ingredient of our work.

\end{abstract}

\maketitle

\section{Introduction and main results}
\setcounter{equation}{0}

In this paper, we are concerned with  the following Neumann problem
\begin{equation}\label{eq1}
\begin{cases}
-\Delta u_1=|u_2|^{p_\epsilon-1}u_2,\  &in\ \Omega,\\
-\Delta u_2=|u_1|^{q_\epsilon-1}u_1, \  &in\ \Omega,\\
\partial_\nu u_1=\partial_\nu u_2=0,\ &on\ \partial\Omega
\end{cases}
\end{equation}
where $\frac1{p+1}+\frac1{q+1}=\frac{n-2}n$, $p_\epsilon=p+\alpha\epsilon, q_\epsilon=q+\beta\epsilon$ with $\alpha,\beta>0$,  $\Omega=B_1(0)$ is the unit ball in $\R^n$ ($n\geq4$) centered at the origin and $\nu$ is the outward pointing normal on $\partial\Omega$.

\medskip
The standard Lane-Emden system in \eqref{eq1}
is a typical Hamiltonian-type strongly coupled elliptic systems, which have been a subject of intense interest and have a rich structure.
Due to the fact that  tools  for analyzing a single equation cannot be used in a direct way to treat the systems, there have been very few results on the existence of solutions for strongly indefinite systems and their qualitative properties.
 One of the first result about positive solutions  appeared in \cite{c-f-m} based on topological arguments. In \cite{f-f}, a variational argument relying on a linking theorem was used to show an existence result.
 In \cite{b-s-r}, the existence, positivity and uniqueness of ground state solutions  was studied.  One may also refer to  \cite{s} and the surveys in \cite{f}.
 It is well known that the system is strongly affected by the values of the couple $(p,q)$.
 The existence theory is associated with the critical hyperbola
\begin{align}\label{pq}
\frac1{p+1}+\frac1{q+1}=\frac{n-2}n,
\end{align}
 which was introduced by  \cite{clement-figueiredo-mitidieri}
and \cite{vorst}.
\medskip

An important observation is that solutions to the Neumann problem \eqref{eq1} satisfy the following compatibility condition necessary:
\begin{align}
\int_\Omega|u_1|^{q_\epsilon-1}u_1=\int_\Omega|u_2|^{p_\epsilon-1}u_2=0,
\end{align}
from which we know that if $(u_1,u_2)\neq0$ is a strong solution, then  it is necessary that both $u_1$ and $u_2$ are  sign-changing.
In contrast to the Dirichlet problem, it is believed that the Neumann problem   allows the existence of smooth solutions on the critical hyperbola, but {\bf there have been very few results to \eqref{eq1} with Neumann conditions up to now.}
The only known existence of least energy nodal solutions is in the subcritical case  \cite{s-t} and in the critical case \cite{arxiv2}: $\frac n{p+1}+\frac n{q+1}\geq n-2$.
It is worth noting that {\bf the application of Lyapunov-Schmidt reduction strategy has several differences with respect to its implementation in the study of Dirichlet problems} (see \cite{KP,kim-moon}). For example, the maxima and minima of the solutions to the Neumann problems can be located at specific points on the boundary, while, in the Dirichlet case, located in the interior of the domain.
This difference implies vital changes in the method, since the curvature of the boundary plays an important role and the blow-up analysis in the Neumann case leads to a limiting problem in the half-space up to a rescaling.
\medskip

It is natural to believe that
 the supercritical case $\frac1{p+1}+\frac1{q+1}<\frac{n-2}n$ would be more complex, and the existence of a nontrivial homology class in $\Omega$ does not guarantee the existence of a nontrivial solution to \eqref{eq1}.  This can be seen  from the single Lane-Emden-Fowler problem with Dirichlet condition \cite{passaseo,c-f-p,wei-yan-jmpa,ackermann-clapp-pistoia}.
 Motivated by \cite{arxiv}, which is related to a single equation, we are aimed to deal with \eqref{eq1}, with small $\epsilon>0$, and investigate the existence of multiple concentrated solutions in the unit ball.


\medskip
In order to introduce our results, we first consider the limit problem.
A positive ground state $(U,V)$ to the following system was found in \cite{lions},
\begin{align}\label{eqUV}
\begin{cases}
&\displaystyle-\Delta U=|V|^{p -1}V,\ \ in\ \ \mathbb R^n,\\
&\displaystyle-\Delta V=|U|^{q -1}U,\ \ in\ \ \mathbb R^n,\\
&\displaystyle(U,V)\in \dot{W}^{2,\frac{p +1}{p }}(\mathbb R^n)\times\dot{W}^{2,\frac{q +1}{q }}(\mathbb R^n),
\end{cases}
\end{align}
where $n\geq3$ and $(p ,q )$ satisfy \eqref{pq}.
By Sobolev embeddings, there holds that
\begin{align}\label{emb}
\begin{split}
\displaystyle \dot{W}^{2,\frac{p +1}{p }}(\mathbb R^n)\hookrightarrow \dot{W}^{1,p^*}(\mathbb R^n)\hookrightarrow L^{q +1}(\R^n),\\
\displaystyle  \dot{W}^{2,\frac{q +1}{q }}(\mathbb R^n)\hookrightarrow \dot{W}^{1,q^*}(\mathbb R^n)\hookrightarrow L^{p +1}(\R^n),
\end{split}\end{align}
with
$$\frac1{p^*}=\frac {p }{p +1}-\frac1n=\frac1{q +1}+\frac1n,\ \ \ \ \frac1{q^*}=\frac {q }{q +1}-\frac1n=\frac1{p +1}+\frac1n,$$
and so the following energy functional is well-defined in $\dot W^{2,\frac{p +1}{p }}(\R^n)\times\dot W^{2,\frac{q +1}{q }}(\R^n)$:
\begin{align*}
 I_{00}(u,v):=\int_{\R^n}\nabla u\cdot\nabla v
-\frac1{p +1}\int_{\R^n}| v|^{p +1}-\frac1{q +1}\int_{\R^n}|u|^{q +1}.
\end{align*}
According to \cite{alvino-lions-trombetti},  the ground state is radially symmetric and decreasing up to a suitable translation.
Thanks to  \cite{hulshof-vorst} and \cite{wang}, \eqref{eqUV} admits a family of solutions
\begin{align*}
(U_{\xi,\delta}(y),V_{\xi,\delta}(y))=(\delta^{-\frac n{q+1}}U_{0,1}(\delta^{-1}(y-\xi)),\delta^{-\frac n{p+1}}V_{0,1}(\delta^{-1}(y-\xi))),
\end{align*}
which are positive and radially decreasing with respect to $\xi\in\R^n$ and $\delta$ is a concentration parameter related to the scalings of the system.
Denote the positive ground state $(U,V):=(U_{0,1},V_{0,1})$ of \eqref{eqUV} with $U_{0,1}(0)=1$.
It is worth noting that we need prove the compatibility condition by performing delicate asymptotic estimates, which are not at all straightforward mainly because {\bf $(U_{\xi,\delta}(y),V_{\xi,\delta}(y))$ do not have an explicit expression and one only has the access to the decay at infinity.}
Sharp asymptotic behavior  of the ground states to \eqref{eqUV}  (see \cite{hulshof-vorst}) and the non-degeneracy for \eqref{eqUV}  at each ground state (see \cite{frank-kim-pistoia}) play an important role to construct bubbling solutions especially using Lyapunov-Schmidt reduction methods.

\medskip

Denoting the Banach space
\begin{align*}
&X_{p ,q }:=\Big(W^{2,\frac{p+1}p}(\Omega)\times W^{2,\frac{q+1}q}(\Omega)\Big)\cap\Big( L^{q_\epsilon +1}(\Omega)\times L^{p_\epsilon +1}(\Omega)\Big),
\end{align*}
we introduce our workspace as follows:
\begin{align*}
 H_\epsilon:=\{&(u_1,u_2)\in X_{p,q}:u_i(x_1,\ldots,x_{n-1},-x_n)=-u_i(x_1,\ldots,x_{n-1},x_n),\\&
u_i(x_1,\ldots,-x_j,\ldots,x_n)=u_i(x_1,\ldots,x_j,\ldots,x_n),j=1,\ldots,n-1,i=1,2\},
\end{align*}
which is equipped with the norm
\begin{align}\label{norm}
\begin{split}
\|(v_1,v_2)\|_{\epsilon}&=\|(v_1,v_2)\|+\|(v_1,v_2)\|_{L^{q_\epsilon +1}\times L^{p_\epsilon +1}},
\end{split}\end{align}
where
\begin{align}\label{norm'}
\begin{split}
\|(v_1,v_2)\|
:=\|\Delta v_1\|_{L^{\frac{p+1}p}(\Omega)}+\|\Delta v_2\|_{L^{\frac{q+1}q}(\Omega)}.
\end{split}\end{align}
Note that if $(u_1,u_2)\in H_\epsilon$, then
\begin{align*}
\int_\Omega u_i=\int_\Omega|u_i|^tu_i=0,\ \ for\ any\ t>0,\ i=1,2.
\end{align*}

\medskip
For any $s>1$ and  $h\in L^s(\Omega)$ satisfying $\int_\Omega h=0$, we denote that
$u=K(h)\in W^{2,s}(\Omega)$ is the unique solution to the pure Neumann problem
\begin{align}\label{1.8}
-\Delta u=h\ \ in\ \ \Omega,\ \ \ \ \  \partial_\nu u=0\ \ on\ \ \partial\Omega,\ \ \ \ \ \int_\Omega u=0.
\end{align}
On the other hand, for $v\in W^{2,s}_{loc}(\R^n)$ such that $\int_\Omega\Delta v=0$, we set
$$Pv:=K(-\Delta v).$$
\medskip

Let $$W_{1,\delta}=U_{e_n,\delta}-U_{-e_n,\delta},\ \ \ W_{2,\delta}=V_{e_n,\delta}-V_{-e_n,\delta},$$
where $e_n=(0,\ldots,0,1)\in\R^n$. We know that $(PW_{1,\delta},PW_{2,\delta})$ is the solution to
\begin{align*}
\begin{cases}
\dis -\Delta PW_{1,\delta}=-\Delta W_{1,\delta}=V_{e_n,\delta}^p-V_{-e_n,\delta}^p \ \ in\ \ \Omega,\\
\dis -\Delta PW_{2,\delta}=-\Delta W_{2,\delta}=U_{e_n,\delta}^q-U_{-e_n,\delta}^q \ \ in\ \ \Omega,\\
\dis \partial_\nu PW_{1,\delta}=\partial_\nu PW_{1,\delta}=0\ \ on\ \ \partial\Omega,\ \ \ \ \int_\Omega PW_{1,\delta}=\int_\Omega PW_{2,\delta}=0.
\end{cases}
\end{align*}
\medskip

Throughout this paper, we assume\\
\medskip
{\bf (P)} $(p,q)$ satisfies \eqref{pq} with $p<q$ and one of the following condition holds:

(i) $\frac n{n-2}<p<\frac{n+2}{n-2}$    \ \ or
\smallskip

(ii) $p_n<p<\frac n{n-2}$ with
$$p_n=\frac{2n+1+\sqrt{(2n+1)^2-24(n-2)}}{4(n-2)}>1.$$

\medskip
\begin{Rem}
The condition (ii) of {\bf (P)} ensures that
$\frac{p-1}{q+1}n>\frac12$, which and other necessary conditions are needed in our discussion later.
\end{Rem}

Our main result in this paper can be stated as follows.
\begin{Thm}\label{th1}
Assume  {\bf (P)} with $n\geq4$ and let $\Omega\subset\R^n$ be the unite ball centred at the origin. Then, there exists $\epsilon_0>0$ such that, for each $\epsilon\in(0,\epsilon_0)$, problem \eqref{eq1} has a solution $(u_{1,\epsilon},u_{2,\epsilon})\in H_\epsilon$
of the form
\begin{align}\label{constructv}
u_{i,\epsilon}=PW_{i,\delta_\epsilon}+\phi_{i,\epsilon},\ i=1,2,
\end{align}
where $\delta_\epsilon=d(\epsilon)\epsilon$ with $d(\epsilon)\rightarrow d_*$ as $\epsilon\rightarrow0$ for certain $d_*>0$.
Moreover, $(\phi_{1,\epsilon},\phi_{2,\epsilon})\in H_\epsilon$ satisfies that
 $\|(\phi_{1,\epsilon},\phi_{2,\epsilon})\|_\epsilon\rightarrow0$.

\end{Thm}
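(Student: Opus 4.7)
The plan is to construct $(u_{1,\epsilon},u_{2,\epsilon})$ by a one-parameter Lyapunov--Schmidt reduction carried out entirely inside the symmetric space $H_\epsilon$. The ansatz $u_i=PW_{i,\delta}+\phi_i$ with $\delta=d\epsilon$ is inserted into \eqref{eq1}, the Neumann inverse $K$ from \eqref{1.8} is applied, and the resulting system for $(\phi_1,\phi_2)$ is rewritten schematically as
\begin{align*}
L_\delta(\phi_1,\phi_2)=N_\delta(\phi_1,\phi_2)+E_\delta,
\end{align*}
where $L_\delta$ is the linearization of the Lane--Emden operator at $(PW_{1,\delta},PW_{2,\delta})$, $E_\delta$ collects the error produced by the approximate solution (coming from the boundary reflection $W_{i,\delta}-PW_{i,\delta}$, the interaction between the bubbles centered at $\pm e_n$, and the shift of the exponents to $p_\epsilon,q_\epsilon$), and $N_\delta$ is the purely superlinear remainder. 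The oddness in $x_n$ and the evenness in each other variable built into $H_\epsilon$ suppress all translational zero modes and automatically enforce the compatibility identities $\int_\Omega|u_i|^{t}u_i=0$, so $L_\delta$ has only a single small eigendirection, namely the scaling mode $Z_\delta=(\partial_\delta PW_{1,\delta},\partial_\delta PW_{2,\delta})$.

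The second step is the linear theory. In the norm $\|\cdot\|_\epsilon$ of \eqref{norm} and its dual, the non-degeneracy of the ground state $(U,V)$ from \cite{frank-kim-pistoia} combined with the sharp decay estimates of \cite{hulshof-vorst} yields the invertibility of $L_\delta$ on the complement of $Z_\delta$, with uniform operator norm for $\delta$ in a bounded interval of positive multiples of $\epsilon$. Estimating $E_\delta$ in the dual norm produces $\|E_\delta\|_*=o(1)$ with an explicit rate blending a boundary-reflection contribution and an exponent-shift contribution of order $\epsilon\log(1/\delta)$. A standard contraction argument on a ball of radius $o(1)$ in $H_\epsilon$ then solves the projected equation for each admissible $d$, giving $\phi_\delta=(\phi_{1,\delta},\phi_{2,\delta})$ depending $C^1$-smoothly on $d$, together with a scalar Lagrange multiplier $c(d,\epsilon)$ associated with $Z_\delta$.

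The final step is the reduction: the condition $c(d,\epsilon)=0$ amounts to a single scalar equation
\begin{align*}
F(d,\epsilon):=\Bigl\langle R_\epsilon(u_{1},u_{2}),\,Z_\delta\Bigr\rangle=0,
\end{align*}
where $R_\epsilon$ is the full residual of \eqref{eq1} evaluated at the corrected ansatz. A careful expansion of $F$ at $\delta=d\epsilon$ isolates a limit profile $F_0(d)$ made of a boundary-reflection term coming from integrals of the type $\int_{\Omega}U_{e_n,\delta}^{q-1}V_{-e_n,\delta}$ and of a derivative-of-exponent term coming from the $\epsilon$-variation of the nonlinearities $|t|^{p_\epsilon-1}t$ and $|t|^{q_\epsilon-1}t$; the scaling $\delta\sim\epsilon$ is precisely what makes these two contributions balance, and the desired $d_*>0$ is recovered as a non-degenerate root of $F_0$. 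The implicit function theorem then delivers $d(\epsilon)\to d_*$ and produces the solution claimed in Theorem \ref{th1}.

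The main obstacle is this last asymptotic expansion. Since $(U,V)$ has no explicit formula, each leading coefficient must be extracted from the decay rates of \cite{hulshof-vorst} and from the integral identities that accompany the non-degeneracy argument in \cite{frank-kim-pistoia}. Moreover, the two alternatives in hypothesis \textbf{(P)} generate genuinely different leading interactions: for $p>\tfrac{n}{n-2}$ the boundary-reflection contribution dominates $F_0$, whereas for $p<\tfrac{n}{n-2}$ the bubble--bubble interaction becomes the principal term, and the lower bound $p>p_n$ imposed in case (ii) is exactly the threshold that guarantees convergence of the integrals involved and internal consistency of the scaling $\delta\sim\epsilon$. Handling both regimes simultaneously within one reduction scheme is the principal technical novelty, and careful case splitting will be required at several points when controlling $E_\delta$ and when computing $F_0$.
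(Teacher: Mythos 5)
Your overall scheme matches the paper's: a one-parameter Lyapunov--Schmidt reduction inside $H_\epsilon$, with the symmetries killing the translational kernel so that only the scaling mode $(\partial_\delta PW_{1,\delta},\partial_\delta PW_{2,\delta})$ survives, invertibility of the linearization from the non-degeneracy of \cite{frank-kim-pistoia}, a contraction argument solving the projected problem, and a final scalar reduction in $d$. One structural difference: the paper closes the reduction \emph{variationally}, expanding the reduced energy $J_\epsilon(d)=I_\epsilon(PW_{1,\delta}+\phi_1,PW_{2,\delta}+\phi_2)$ to leading order $\frac2n\mathcal A_1+(\tfrac{n\alpha}{(p+1)^2}+\tfrac{n\beta}{(q+1)^2})\mathcal A_1\epsilon\log\epsilon+\mathcal G(d)\epsilon+o(\epsilon)$ and showing $\mathcal G$ has a strict interior maximum, so $J_\epsilon$ inherits one for small $\epsilon$. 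You propose instead to write the orthogonality coefficient $c(d,\epsilon)$ as a scalar equation $F(d,\epsilon)=0$ and to apply the implicit function theorem at a non-degenerate zero of the limit $F_0$. The two routes are equivalent here (the paper notes $J_\epsilon'(d)=0\Leftrightarrow I_\epsilon'=0$, and $\mathcal G'(d)=\tfrac{n}{d}(\cdots)-(\cdots)$ is strictly decreasing, so the root is non-degenerate), but the variational version is slightly cheaper since it only uses the coercive shape of $\mathcal G$ and never needs a $\mathcal G''(d_*)\neq0$ argument.

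There is however a concrete misconception in the last paragraph of your proposal that you should correct before attempting the expansion. You assert that for $p<\frac{n}{n-2}$ the bubble--bubble interaction becomes the principal term in $F_0$, while for $p>\frac{n}{n-2}$ the boundary reflection dominates. This is not what happens. Lemma \ref{lemb6} shows that the cross terms $\int_\Omega U_{e_n,\delta}U_{-e_n,\delta}^q$ and $\int_\Omega V_{e_n,\delta}V_{-e_n,\delta}^p$ are $o(\delta)$ in \emph{both} regimes, precisely because in case (ii) of {\bf (P)} one still has $q((n-2)p-2)=(p+1)n>n$ and $\frac{pq-1}{q+1}n=2(p+1)>1$; hypothesis (ii), through $p>p_n$, is what secures those convergence rates. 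The dominant balance is always between the boundary-reflection contributions (the $\mathcal B_i$ and $\mathcal C_i$ terms of order $\delta\sim d\epsilon$) and the exponent-shift contribution ($\epsilon\log\delta$), and the formula for $\mathcal G(d)$ in \eqref{G} is the same in both cases. What changes with $p<\frac{n}{n-2}$ is the decay profile of $U_{0,1}$ (exponent $(n-2)p-2$ instead of $n-2$), which modifies the estimates for $\varphi_{1,0}$, $\zeta_{1,\delta}$, and all the residual bounds in Proposition \ref{lemexpansion} and in Lemmas \ref{lemb11}--\ref{lemb11'}, and also the exact exponent $\frac{(p+1)n}{2}$ appearing in the integral defining $\mathcal B_1$ (in place of $\frac{(n-2)(q+1)}{2}$). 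If you build $F_0$ around the bubble--bubble interaction for $p<\frac n{n-2}$ you will find it vanishes to higher order and the reduced equation will not close; you must keep the boundary and exponent terms as the leading pair in both cases.
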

\medskip
\begin{Rem}
It is worth noting that {\bf in the case of $p<\frac n{n-2}$, the system \eqref{eq1} exhibits   stronger nonlinear feature that the single equation does not have.}
The essential reason  lies in the fact that the decay order of the ground state solution $(U,V)$ of the limit problem \eqref{eqUV} at infinity differs from (strictly smaller than) that of the fundamental  solution of $-\Delta$ in $\R^n$ (see Lemma \ref{lemasym}), resulting in significant differences in the properties of the solution compared to the single equations.

From the counterpart problem involving the Dirichlet boundary condition, it is much easier to understand: unlike $p>\frac n{n-2}$, in the case of $p<\frac n{n-2}$, the approximate solution of the Lane-Emden system cannot mainly be expressed using the ground state solution of the limiting problem and the regular part $H$ of the Green's function. More precisely, the characterization of the boundary behavior of  a similar regular function $\tilde H$ corresponding to  $p<\frac n{n-2}$ becomes rougher and more difficult to control than that of $H$ related to the case of $p>\frac n{n-2}$.
\medskip

{\bf It is worth noting that  we consider both two ranges  contained in $p>\frac n{n-2}$ and $p<\frac n{n-2}$ respectively.} The coupling mechanism of the strongly indefinite problem in these two cases is totally different.  Even in the case of $p>\frac{n}{n-2}$, the blow-up scenario is not the same as that of the single Lane-Emden equation, and the standard approach does not work well, which forces us to adopt some new approach and analysis.

Moreover, {\bf the case $p=\frac n{n-2}$ can be treated by slightly modifying the proof of the case (i)} in {\bf (P)}, and we omit the details.
\end{Rem}

\medskip

\medskip

This paper is organized as follows. In section 2, we introduce investigate the projection of the bubbles and construct the basic Ansatz for our proofs.
In section 3, we perform the linear analysis and solve the auxiliary nonlinear problem, reducing the problem to finding a critical point of some function $J_\epsilon$,
which is called the reduced energy on a finite-dimensional set. Finally, the reduced problem is solved in section 4.  Some useful tools and basic estimates 
are put in the appendix.

\medskip

\section{Projections and Ansatz}

\subsection{Properties of the bubbles}

Recall the bubbles satisfy the following properties.

\begin{Lem}\label{lemasym}\cite{hulshof-vorst}
Assume that $p \leq\frac{n+2}{n-2}$. There exist some positive constants $a=a_{n,p }$ and $b=b_{n,p }$ depending only on $n$ and $p $ such that
\begin{align}\label{asymV}
&\lim_{r\rightarrow\infty}r^{n-2}V_{0,1}(r)=b_{n,p };
\end{align}
while
\begin{align}\label{asymU}
\begin{cases}
\dis \lim_{r\rightarrow\infty}r^{n-2}U_{0,1}(r)=a_{n,p },\ \ \ \ \ \ \ &\text{if}\  p >\frac n{n-2};\vspace{0.12cm}\\
\dis \lim_{r\rightarrow\infty}\dis\frac{r^{n-2}}{\log r}U_{0,1}(r)=a_{n,p },\ \ \ \ \ \ \ &\text{if}\ p =\frac n{n-2};\vspace{0.12cm}\\
\dis \lim_{r\rightarrow\infty}r^{(n-2)p -2}U_{0,1}(r)=a_{n,p },\ \ &\text{if}\ p <\frac n{n-2}.
\end{cases}
\end{align}
Furthermore, in the last case, we have $b_{n,p }^{p }=a_{n,p }((n-2)p -2)(n-(n-2)p )$.
\end{Lem}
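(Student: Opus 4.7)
The plan is to exploit the radial monotonicity of the ground state and rewrite the PDE system as a pair of integral equations coming from the radial fundamental solution of $-\Delta$. Writing $U=U_{0,1}$, $V=V_{0,1}$ and integrating $-(r^{n-1}U')'=r^{n-1}V^p$ twice, one obtains
\begin{equation*}
U(r)=\frac{1}{n-2}\left[r^{2-n}\int_0^r s^{n-1}V^p(s)\,ds+\int_r^\infty s\,V^p(s)\,ds\right],
\end{equation*}
and the analogous formula with $(U,V,p)$ replaced by $(V,U,q)$. The proof proceeds by a bootstrap: the integrability \eqref{emb} gives $U\in L^{q+1}$, $V\in L^{p+1}$, which together with radial monotonicity yields rough pointwise bounds $U(r)\lesssim r^{-n/(q+1)}$, $V(r)\lesssim r^{-n/(p+1)}$. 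Substituting back into the integral formulas improves the decay, and after finitely many iterations one reaches the regime where $\int_0^\infty s^{n-1}U^q\,ds<\infty$ (this is where $q>\frac{n}{n-2}$, automatic from \eqref{pq} with $p<q$, is used).

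Once $U^q$ is integrable against the measure $s^{n-1}ds$, the asymptotic behavior \eqref{asymV} of $V$ is immediate: the first term in the integral formula for $V$ gives $V(r)\sim r^{2-n}\cdot\frac{1}{n-2}\int_0^\infty s^{n-1}U^q(s)\,ds$, while the tail $\int_r^\infty sU^q(s)\,ds=o(r^{2-n})$, so $b_{n,p}=\frac{1}{n-2}\int_0^\infty s^{n-1}U^q(s)\,ds$.

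The three cases of \eqref{asymU} are then dictated by where the exponent $(2-n)p$ sits relative to the critical integrability threshold $-n$. Using \eqref{asymV}, $V^p(s)\sim b_{n,p}^p s^{(2-n)p}$ at infinity, so $\int_0^\infty s^{n-1}V^p\,ds<\infty$ iff $p>\frac{n}{n-2}$; in that case the first term of the integral formula for $U$ dominates and gives $U(r)\sim r^{2-n}a_{n,p}$ with $a_{n,p}=\frac{1}{n-2}\int_0^\infty s^{n-1}V^p(s)\,ds$. For $p=\frac{n}{n-2}$ the integral diverges logarithmically, $\int_0^r s^{n-1}V^p\sim b_{n,p}^p\log r$, producing the $\log r$ factor. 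For $p<\frac{n}{n-2}$ the first term only contributes at the lower order $r^{2-n}$ and the tail $\int_r^\infty sV^p\,ds$ takes over; a direct substitution gives
\begin{equation*}
U(r)\sim\frac{b_{n,p}^p}{n-2}\Big[\frac{1}{n-(n-2)p}+\frac{1}{(n-2)p-2}\Big]r^{2-(n-2)p}=\frac{b_{n,p}^p}{\bigl(n-(n-2)p\bigr)\bigl((n-2)p-2\bigr)}\,r^{2-(n-2)p},
\end{equation*}
where both denominators are positive precisely in the range $\frac{2}{n-2}<p<\frac{n}{n-2}$. Reading off $a_{n,p}$ from this expression yields the identity $b_{n,p}^p=a_{n,p}((n-2)p-2)(n-(n-2)p)$.

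The principal obstacle is the bootstrap step: the naive integrability from the Sobolev embedding does not immediately produce enough decay to run the asymptotic analysis, and one must iterate the integral identities while carefully keeping track of the exponents, verifying at each stage that the improved bound is compatible with radial monotonicity. Since this is a standard but lengthy ODE/integral-equation argument, carried out in detail by Hulshof--Van der Vorst \cite{hulshof-vorst}, I would invoke their analysis to conclude, the essential content being captured in the asymptotic matching above.
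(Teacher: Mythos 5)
This lemma is taken by the paper verbatim from \cite{hulshof-vorst} with no proof supplied, so there is no internal argument to compare against; your proposal is in effect a reconstruction of the cited result, and as a sketch it is sound. The radial integral representation $U(r)=\frac{1}{n-2}\bigl[r^{2-n}\int_0^r s^{n-1}V^p(s)\,ds+\int_r^\infty sV^p(s)\,ds\bigr]$ is correct, the three regimes for $U$ are correctly located by the integrability threshold of $s^{n-1}V^p(s)\sim b_{n,p}^p\,s^{n-1-(n-2)p}$, and your matching computation for $p<\frac n{n-2}$ is right: the bulk and tail contributions sum to $\frac{1}{n-2}\bigl[\frac{1}{n-(n-2)p}+\frac{1}{(n-2)p-2}\bigr]=\frac{1}{((n-2)p-2)(n-(n-2)p)}$, which yields exactly the stated identity $b_{n,p}^{p}=a_{n,p}((n-2)p-2)(n-(n-2)p)$. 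The one place where your argument is not self-contained is the step you yourself flag: upgrading the Sobolev-embedding integrability to pointwise decay strong enough that $\int_0^\infty s^{n-1}U^q\,ds<\infty$ (in the regime $p<\frac n{n-2}$ this uses $((n-2)p-2)q=2p+n+2>n$, a consequence of the critical hyperbola) and that the substitution $V^p(s)\sim b_{n,p}^p s^{(2-n)p}$ may be integrated; you defer this bootstrap to Hulshof--Van der Vorst, which is precisely what the paper does for the whole lemma, so this is acceptable, but it means your write-up is a guided citation rather than an independent proof.
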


\begin{Lem}\label{lemasym'}\cite{kim-moon}
For $r=|y|\geq1$, there exists some $C>0$ such that
\begin{align}\label{asymV'}
&\Big|V_{0,1}(r)-\frac{b_{n,p}}{r^{n-2}}\Big|\leq\frac C{r^n},\ \ \Big|V'_{0,1}(r)+\frac{b_{n,p}(n-2)}{r^{n-1}}\Big|\leq\frac C{r^{n+1}}
\end{align}
while for $\kappa_0=(n-2)p-n$, $\kappa_1\in(0,(\min\{n-(n-2)p,((n-2)p-2)q-n\})^6)$,
\begin{align}\label{asymU}
\begin{cases}
\dis \Big|U_{0,1}(r)-\frac{a_{n,p}}{r^{n-2}}\Big|\leq\frac C{r^{n-2+\kappa_0}},\ \ \Big|U'_{0,1}(r)+\frac{a_{n,p}(n-2)}{r^{n-1}}\Big|\leq\frac C{r^{n-1+\kappa_0}},\ &\text{if}\  p >\frac n{n-2};\vspace{0.12cm}\\
\dis\Big|U_{0,1}(r)-\frac{a_{n,p}\log r}{r^{n-2}}\Big|\leq\frac C{r^{n-2}},\ \ \Big|U'_{0,1}(r)+\frac{a_{n,p}(n-2)\log r}{r^{n-1}}\Big|\leq\frac C{r^{n-1}},\ \ \ &\text{if}\ p =\frac n{n-2};\vspace{0.12cm}\\
\dis\Big|U_{0,1}(r)-\frac{a_{n,p}}{r^{(n-2)p-2}}\Big|\leq\frac C{r^{(n-2)p-2+\kappa_1}},\ \ \Big|U'_{0,1}(r)+\frac{a_{n,p}((n-2)p-2)}{r^{(n-2)p-1}}\Big|\leq\frac C{r^{(n-2)p-1+\kappa_1}},\ \ &\text{if}\ p <\frac n{n-2}.
\end{cases}
\end{align}

\end{Lem}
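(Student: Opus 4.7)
The plan is to bootstrap off the leading-order asymptotics already recorded in Lemma \ref{lemasym} by exploiting the radial ODE representation of the Lane-Emden system. Writing $U = U_{0,1}$, $V = V_{0,1}$ as functions of $r = |y|$, the system becomes $-(r^{n-1}U')' = r^{n-1}V^p$ and $-(r^{n-1}V')' = r^{n-1}U^q$, and integrating twice (using $U, V \to 0$ at infinity) yields the exact representations
\begin{align*}
-r^{n-1} U'(r) &= \int_0^r s^{n-1} V(s)^p\, ds, \qquad U(r) = \int_r^\infty s^{1-n}\int_0^s t^{n-1} V(t)^p\, dt\, ds,\\
-r^{n-1} V'(r) &= \int_0^r s^{n-1} U(s)^q\, ds, \qquad V(r) = \int_r^\infty s^{1-n}\int_0^s t^{n-1} U(t)^q\, dt\, ds.
\end{align*}
The coefficients $a_{n,p}$, $b_{n,p}$ will be identified with the limiting values of these integrals whenever they converge, and the refined error bounds will be extracted by substituting the leading-order information back into the tails.

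For the $V$ estimate I would set $B := \int_0^\infty t^{n-1} U(t)^q\, dt$ and check that $B<\infty$ in every case of $p$: the critical-hyperbola identity gives the algebraic relation $((n-2)p-2)q = 2p + n + 2$, so in case~(iii) (where $U \sim r^{-((n-2)p-2)}$ by Lemma \ref{lemasym}) one has $t^{n-1}U^q \sim t^{-(2p+2)}$, which is integrable. Splitting $\int_0^s = B - \int_s^\infty$ produces $V(r) = \tfrac{B}{n-2}\,r^{-(n-2)} - \int_r^\infty s^{1-n}\tau(s)\, ds$ with $\tau(s) := \int_s^\infty t^{n-1} U^q\, dt$, identifying $b_{n,p} = B/(n-2)$. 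Plugging the Lemma \ref{lemasym} decay of $U$ into $\tau(s)$ yields $\tau(s) = O(s^{2-(n-2)q})$ in case~(i) and $\tau(s) = O(s^{-(2p+2)})$ in case~(iii); in both situations the outer integral is $O(r^{-n})$. The derivative estimate then follows at once from $V'(r) = -r^{1-n}(B-\tau(r))$.

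The $U$ estimates split according to the three ranges of $p$. When $p > n/(n-2)$, the integral $A := \int_0^\infty t^{n-1} V^p\, dt$ converges since $(n-2)p > n$, and the same scheme identifies $a_{n,p} = A/(n-2)$ and delivers $\int_s^\infty t^{n-1} V^p\, dt = O(s^{-\kappa_0})$ with $\kappa_0 = (n-2)p - n$, hence $|U(r) - a_{n,p}/r^{n-2}| \lesssim r^{-(n-2+\kappa_0)}$. The marginal case $p = n/(n-2)$ is the same calculation with the logarithmic divergence of $A$ absorbed into the leading $(a_{n,p}\log r)/r^{n-2}$ term. The genuinely delicate case is $p < n/(n-2)$: here $\int_0^r t^{n-1} V^p\, dt$ blows up like $r^{n-(n-2)p}$ because $V^p \sim b^p r^{-(n-2)p}$ is not integrable against $r^{n-1}$, so using the already-established $V(t) = b/t^{n-2} + O(t^{-n})$ I would expand
\begin{align*}
-r^{n-1}U'(r) = \frac{b^p}{n-(n-2)p}\,r^{n-(n-2)p} + O(r^{n-(n-2)p-\kappa_1}),
\end{align*}
and a second integration identifies $a_{n,p} = b^p/[(n-(n-2)p)((n-2)p-2)]$, equivalently the identity $b_{n,p}^p = a_{n,p}((n-2)p-2)(n-(n-2)p)$ appearing in Lemma \ref{lemasym}, together with the error $O(r^{-((n-2)p-2+\kappa_1)})$.

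The main obstacle is precisely case~(iii): because the Newtonian-potential representation for $U$ has divergent total mass, one has to track the cancellation between the leading divergent piece and the genuine correction, with each round of refinement in $V$ feeding back into one round of refinement in $U$. The sixth power in the admissible range for $\kappa_1$ provides the slack needed to absorb the losses coming from several rounds of this bootstrap, and in particular ensures $\kappa_1 < n-(n-2)p$ and $\kappa_1 < ((n-2)p-2)q - n$, which are precisely the two budgets that control the second-order term in $V^p$ and the contribution from the region $\{t\sim r\}$. The derivative bounds for $U'$ then follow from $U'(r) = -r^{1-n}\int_0^r t^{n-1} V^p\, dt$ by inserting the refined $V$ expansion. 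Since the statement is quoted verbatim from \cite{kim-moon}, one could alternatively cite their detailed argument; the outline above is meant to show how the critical-hyperbola algebra makes each case quantitatively precise.
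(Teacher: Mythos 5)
The paper does not actually prove this lemma: it is imported verbatim from \cite{kim-moon}, so there is no internal argument to compare yours against. Your sketch is nevertheless a legitimate, essentially self-contained route, and it is the natural one: pass to the radial ODE system, integrate $-(r^{n-1}U')'=r^{n-1}V^p$, $-(r^{n-1}V')'=r^{n-1}U^q$ twice, identify $a_{n,p},b_{n,p}$ with the (possibly divergent, in case (iii)) total masses, and feed the crude decay of Lemma \ref{lemasym} back into the tails. This is consistent in spirit with how the estimates are obtained in \cite{kim-moon}; what your version buys is transparency about where the algebra of the critical hyperbola enters, in particular the identity $((n-2)p-2)(q+1)=n(p+1)$, which yields $((n-2)p-2)q-n=2p+2$ and both the integrability of $r^{n-1}U^q$ in case (iii) and the relation $b_{n,p}^p=a_{n,p}((n-2)p-2)(n-(n-2)p)$.

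A few points should be tightened, though none is a genuine gap. First, two exponents are off: in case (i) one has $\tau(s)=\int_s^\infty t^{n-1}U^q\,dt=O(s^{\,n-(n-2)q})$, not $O(s^{\,2-(n-2)q})$ (the outer integral is then $O(r^{\,2-(n-2)q})\subset O(r^{-n})$ because $q>\frac{n+2}{n-2}$ on the hyperbola with $p<q$), and in case (iii) $t^{n-1}U^q\sim t^{-(2p+3)}$, so $\tau(s)=O(s^{-(2p+2)})$. Second, no iterated bootstrap is actually needed: the leading-order decay of $U$ from Lemma \ref{lemasym} already gives $V=b_{n,p}r^{2-n}+O(r^{-n})$, and one insertion of this refined $V$ into $-r^{n-1}U'=\int_0^r t^{n-1}V^p\,dt$ produces, after the second integration, the case (iii) bound with any $\kappa_1\le\min\{n-(n-2)p,\,2\}$; the constant of integration contributes the $C\,r^{2-n}$ term, which is why the constraint $\kappa_1\le n-(n-2)p$ appears. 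Finally, your claim that the sixth power in the admissible range \emph{ensures} $\kappa_1<\min\{n-(n-2)p,\,((n-2)p-2)q-n\}$ is only valid when that minimum is at most $1$; under assumption {\bf (P)}(ii) this does hold, since $p_n>\frac{n-1}{n-2}$ forces $n-(n-2)p<1$, but it is worth verifying rather than asserting. With these adjustments your outline would constitute a complete alternative to simply citing \cite{kim-moon}.
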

\begin{Rem}\label{rem2.3}
By use of the equation \eqref{eqUV} of $(U,V)$, we obtain
\begin{align}\label{eqrUV}
\begin{cases}
&\displaystyle-U''(r)-\frac{n-1}rU'(r)=V^p(r),\\
&\displaystyle-V''(r)-\frac{n-1}rV'(r)=U^q(r).
\end{cases}
\end{align}
By Lemma \ref{lemasym} and Lemma \ref{lemasym'}, we obain
\begin{align}\label{UV''}
\begin{split}
 |U''(r)|&\leq C\Big(\frac1r|U'(r)|+V^p(r)\Big)\leq C\begin{cases}
\dis \frac1{r^n}+\frac1{r^{(n-2)p}}=O(\frac1{r^n}) \ &if\ p\in(\frac n{n-2},\frac{n+2}{n-2}),\\
\dis \frac{\log r}{r^n}+\frac1{r^{(n-2)p}}=O(\frac{\log r}{r^n}) \ &if\ p=\frac n{n-2},\\
\dis O(\frac1{r^{(n-2)p}}) \ &if\ p\in(\frac 2{n-2},\frac{n}{n-2}),
\end{cases}\\
 |V''(r)|&\leq C\Big(\frac1r|V'(r)|+U^q(r)\Big)\leq C\begin{cases}
\dis \frac1{r^n}+\frac1{r^{(n-2)q}}=O(\frac1{r^n}) \ &if\ p\in(\frac n{n-2},\frac{n+2}{n-2}),\\
\dis \frac{\log r}{r^n}+\frac{(\log r)^q}{r^{(n-2)q}}=O(\frac{1}{r^n}) \ &if\ p=\frac n{n-2},\\
\dis \frac1{r^n}+\frac1{r^{((n-2)p-2)q}}=O(\frac1{r^n})  \ &if\ p\in(\frac 2{n-2},\frac{n}{n-2})
\end{cases}\\
&=O(\frac1{r^n}).
\end{split}
\end{align}

\end{Rem}

\begin{Lem}\label{lemnonde}\cite{frank-kim-pistoia}
Set
$$
(\Psi_{0,1}^0,\Phi_{0,1}^0)=\Big(y\cdot\nabla U_{0,1}+\frac{n U_{0,1}}{q +1},y\cdot\nabla V_{0,1}+\frac{nV_{0,1}}{p +1}\Big)
$$
 and
$$(\Psi_{0,1}^l,\Phi_{0,1}^l)=(\partial_l U_{0,1},\partial_l V_{0,1}),\ \ for\ \ l=1,\ldots,n.$$
Then the space of solutions to the linear system
\begin{align}\label{eqspan}
\begin{cases}
&\displaystyle -\Delta\Psi=p V_{0,1}^{p -1}\Phi,\ \ \text{in}\ \mathbb R^n,\vspace{0.12cm}\\
&\displaystyle -\Delta\Phi=q U_{0,1}^{q -1}\Psi,\ \ \text{in}\ \mathbb R^n,\vspace{0.12cm}\\
&\displaystyle (\Psi,\Phi)\in  \dot{W}^{2,\frac{p +1}{p }}(\mathbb R^n)\times\dot{W}^{2,\frac{q +1}{q }}(\mathbb R^n)
\end{cases}
\end{align}
is spanned by
$$\Big\{(\Psi_{0,1}^0,\Phi_{0,1}^0),(\Psi_{0,1}^1,\Phi_{0,1}^1),\ldots,(\Psi_{0,1}^n,\Phi_{0,1}^n)\Big\}.$$
\end{Lem}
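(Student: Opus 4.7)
The plan is to exploit the radial symmetry of $(U,V):=(U_{0,1},V_{0,1})$ by decomposing any admissible $(\Psi,\Phi)$ into spherical harmonics,
$$
\Psi(r,\theta)=\sum_{k\geq 0}\psi_k(r)Y_k(\theta),\qquad \Phi(r,\theta)=\sum_{k\geq 0}\phi_k(r)Y_k(\theta),
$$
where $\{Y_k\}$ is an $L^2(S^{n-1})$-orthonormal basis of spherical harmonics with $-\Delta_{S^{n-1}}Y_k=\lambda_k Y_k$, $\lambda_k=k(k+n-2)$. Since $V^{p-1}(r)$ and $U^{q-1}(r)$ are radial, \eqref{eqspan} splits into a family of radial systems
$$
\mathcal L_k\psi_k = pV^{p-1}\phi_k,\qquad \mathcal L_k\phi_k = qU^{q-1}\psi_k,\qquad r\in(0,\infty),
$$
with $\mathcal L_k:=-\partial_r^2-\tfrac{n-1}{r}\partial_r+\tfrac{\lambda_k}{r^2}$. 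The goal is then to show that the admissible solution space of this radial system is one-dimensional for $k=0$, one-dimensional for $k=1$ (which produces $n$ dimensions in $(\Psi,\Phi)$ after multiplying by the $n$ independent harmonics of degree one), and trivial for every $k\geq 2$.

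\textbf{Accounting for the known solutions at $k=0$ and $k=1$.} Differentiating the two-parameter family $\{(U_{\xi,\delta},V_{\xi,\delta})\}$ at $(\xi,\delta)=(0,1)$ in $\delta$ produces the radial direction $(\Psi^0_{0,1},\Phi^0_{0,1})$, and differentiating in each component $\xi_l$ produces $(\partial_l U,\partial_l V)=(\Psi^l_{0,1},\Phi^l_{0,1})$, whose radial factor lives in the first harmonic mode. The decay estimates of Lemma \ref{lemasym}, together with the scaling $U_{0,\delta}(y)=\delta^{-n/(q+1)}U(y/\delta)$, $V_{0,\delta}(y)=\delta^{-n/(p+1)}V(y/\delta)$, imply these explicit kernel elements lie in $\dot W^{2,(p+1)/p}\times\dot W^{2,(q+1)/q}$. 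To verify that they exhaust the admissible kernel in modes $k=0$ and $k=1$, I would run an ODE shooting/phase-space analysis: the four-dimensional space of local solutions of the radial system at $r=0$ is cut down by regularity at the origin (two singular branches of logarithmic/negative-power type are excluded), and the Sobolev decay at infinity excludes one further growing branch, leaving exactly one admissible direction in each of these two modes.

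\textbf{Main obstacle: triviality for $k\geq 2$.} The substantive step, and the main difficulty, is to show that the only admissible solution for every $k\geq 2$ is $(\psi_k,\phi_k)\equiv 0$. My approach would be to introduce the Emden--Fowler substitution $r=e^t$ together with the weighted rescaling $\psi_k(r)=r^{-n/(q+1)}\tilde\psi(t)$, $\phi_k(r)=r^{-n/(p+1)}\tilde\phi(t)$, which normalizes the scaling invariance and converts the radial system into a linear nonautonomous system whose coefficients tend, as $t\to\pm\infty$, to explicit constant-coefficient limits computed from Lemma \ref{lemasym}. A monotonicity-in-$k$ comparison then becomes available: the $k=1$ mode already realizes the slowest admissible decay at infinity, namely that of $(\partial_l U,\partial_l V)$, so the strict inequality $\lambda_k>\lambda_1$ for $k\geq 2$ raises the effective potential in $\mathcal L_k$ and forces any admissible $(\tilde\psi,\tilde\phi)$ to decay strictly faster at both ends of $t$ than any nontrivial solution of the limiting autonomous system can, yielding $(\tilde\psi,\tilde\phi)\equiv 0$. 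The delicate technical point---where the proof requires the most care---is controlling the coupling between $\tilde\psi$ and $\tilde\phi$ across the transition region, especially since the relevant decay rate of $U$ at infinity changes between the subranges $p>\tfrac{n}{n-2}$ and $p<\tfrac{n}{n-2}$ of Lemma \ref{lemasym}, and the two cases must be handled separately when comparing the autonomous model at $t\to+\infty$.
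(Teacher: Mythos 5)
You should first note that the paper does not prove this lemma at all: it is quoted verbatim from \cite{frank-kim-pistoia}, so there is no internal argument to compare with, and your proposal has to stand or fall as a proof of that non-degeneracy theorem itself. Your overall frame (decompose into spherical harmonics, identify the known kernel elements in the modes $k=0$ and $k=1$ coming from the dilation and translation invariances, and kill the modes $k\geq 2$) is indeed the strategy of the cited work, and your observation that the asymptotics of Lemma \ref{lemasym} change across $p=\tfrac n{n-2}$ is a real issue there.

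The genuine gap is in the two steps you treat as routine. For the modes $k=0,1$, the count ``four local branches at $r=0$, minus two singular ones, minus one growing one at infinity'' is only heuristic: for a coupled system of two second-order ODEs the space of solutions regular at the origin is two-dimensional in each mode, and you must \emph{prove} that at most one of those directions has the decay required by $\dot W^{2,\frac{p+1}{p}}\times\dot W^{2,\frac{q+1}{q}}$; note that the same naive count applied to $k\geq2$ would also leave one admissible direction, contradicting the triviality you need, so the count cannot be the argument. More seriously, the key step for $k\geq 2$ is asserted through a Sturm-type monotonicity in $\lambda_k$ (``raising the effective potential forces faster decay''), but this comparison principle is a feature of a single self-adjoint Schr\"odinger operator and does not transfer to the strongly indefinite coupled system $\mathcal L_k\psi=pV^{p-1}\phi$, $\mathcal L_k\phi=qU^{q-1}\psi$: there is no scalar energy or oscillation theory in which increasing $\lambda_k$ monotonically shrinks the solution set, and since $U,V$ have no explicit formula, the Emden--Fowler limits at $t\to\pm\infty$ only give indicial roots, not a mechanism excluding a solution that decays at both ends. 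Making this monotonicity rigorous for the Hamiltonian system is exactly the substantive content of \cite{frank-kim-pistoia}, so as written your proposal postpones rather than supplies the core of the proof.
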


Therefore, the space of solutions of \eqref{eqspan} which are even in $x_1,\ldots,x_n$, is spanned by
$$(\partial_\delta U_{0,\delta}|_{\delta=1},\partial_\delta V_{0,\delta}|_{\delta=1}).$$
Observe that
\begin{align}\label{deltaU}
\delta|\partial_\delta U_{\xi,\delta}(x)|\leq CU_{\xi,\delta}(x),\ \ \delta|\partial_\delta V_{\xi,\delta}(x)|\leq CV_{\xi,\delta}(x).
\end{align}
\medskip
For convenience, in the sequel, we denote $$(U,V):=(U_{0,1},V_{0,1}).$$
\medskip

\subsection{Expansions of the projections}

In this subsection, we consider both the case  $p>\frac n{n-2}$ and  $p<\frac n{n-2}$ to give the expansion of the projections.

For $\Omega=B_1(0)$, we denote
\begin{align}\label{omegasplit}
\Omega_+=\Omega\cap B_{\frac12}(e_n),\ \ \ \Omega_-=\Omega\cap B_{\frac12}(-e_n).
\end{align}

Let $\varphi_{1,0}$ and $\varphi_{2,0}$ be the solution of
\begin{align}\label{phi0}
\begin{split}
&-\Delta\varphi_{1,0}=0,\ in\ \R^n_+,\ \ \frac{\partial\varphi_{1,0}}{\partial x_n}=-\frac{r}2 U'_{0,1}(r)|_{x_n=0},\ on\ \partial\R^n_+,\ \ \varphi_{1,0}\rightarrow0\ as\ r\rightarrow\infty;\\
&-\Delta\varphi_{2,0}=0,\ in\ \R^n_+,\ \ \frac{\partial\varphi_{2,0}}{\partial x_n}=-\frac{r}2V'_{0,1}(r)|_{x_n=0},\ on\ \partial\R^n_+,\ \ \varphi_{2,0}\rightarrow0\ as\ r\rightarrow\infty,
\end{split}
\end{align}
where $r=|x|,x=(x_1,\ldots,x_n)$.

We are aimed to establish the following expansions.
\begin{Prop}\label{lemexpansion}
It holds that
\begin{align*}
&PW_{1,\delta}(x)=W_{1,\delta}(x)-\delta^{-\frac n{q+1}+1}\Big(\varphi_{1,0}(\frac{e_n-x}{\delta})-\varphi_{1,0}(\frac{e_n+x}{\delta})\Big)+\zeta_{1,\delta}(x),\\
&PW_{2,\delta}(x)=W_{2,\delta}(x)-\delta^{-\frac n{p+1}+1}\Big(\varphi_{2,0}(\frac{e_n-x}{\delta})-\varphi_{2,0}(\frac{e_n+x}{\delta})\Big)+\zeta_{2,\delta}(x),
\end{align*}
where $\zeta_{1,\delta}=O(\delta^{2-\frac n{q+1}})$, $\zeta_{2,\delta}=O(\delta^{2-\frac n{p+1}})$,
$\partial_\delta\zeta_{1,\delta}=O(\delta^{1-\frac n{q+1}})$, $\partial_\delta\zeta_{2,\delta}=O(\delta^{1-\frac n{p+1}})$ as $\delta\rightarrow0$ uniformly in $\Omega$.
Moreover, for some constant $C>0$, we have
\begin{align}\label{zeta1}
\begin{split}
&|\zeta_{1,\delta}(x)|,|PW_{1,\delta}(x)-W_{1,\delta}(x)|\leq \begin{cases}\dis C\Big(\frac{\delta^{1-\frac n{q+1}}}{\big(1+\frac{|x-e_n|}{\delta}\big)^{n-3}}+\frac{\delta^{1-\frac n{q+1}}}{\big(1+\frac{|x+e_n|}{\delta}\big)^{n-3}}\Big) &if\ p>\frac n{n-2}\\
\dis C\Big(\frac{\delta^{1-\frac n{q+1}}}{\big(1+\frac{|x-e_n|}{\delta}\big)^{(n-2)p-3}}+\frac{\delta^{1-\frac n{q+1}}}{\big(1+\frac{|x+e_n|}{\delta}\big)^{(n-2)p-3}}\Big)
 &if\ p<\frac n{n-2}
\end{cases},\\
&|\zeta_{2,\delta}(x)|,|PW_{2,\delta}(x)-W_{2,\delta}(x)|\leq C\Big(\frac{\delta^{1-\frac n{p+1}}}{\big(1+\frac{|x-e_n|}{\delta}\big)^{n-3}}+\frac{\delta^{1-\frac n{p+1}}}{\big(1+\frac{|x+e_n|}{\delta}\big)^{n-3}}\Big),
\end{split}\end{align}
\begin{align}\label{zeta2}
\begin{split}
&|\partial_\delta\zeta_{1,\delta}(x)|,|\partial_\delta(PW_{1,\delta}(x)-W_{1,\delta}(x))|\leq
\begin{cases}\dis C\Big(\frac{\delta^{-\frac n{q+1}}}{\big(1+\frac{|x-e_n|}{\delta}\big)^{n-3}}+\frac{\delta^{-\frac n{q+1}}}{\big(1+\frac{|x+e_n|}{\delta}\big)^{n-3}}\Big)
 &if\ p>\frac n{n-2} \\
\dis C\Big(\frac{\delta^{-\frac n{q+1}}}{\big(1+\frac{|x-e_n|}{\delta}\big)^{(n-2)p-3}}+\frac{\delta^{-\frac n{q+1}}}{\big(1+\frac{|x+e_n|}{\delta}\big)^{(n-2)p-3}}\Big)
 &if\ p<\frac n{n-2}
 \end{cases},\\
&|\partial_\delta\zeta_{2,\delta}(x)|,|\partial_\delta(PW_{2,\delta}(x)-W_{2,\delta}(x))|\leq C\Big(\frac{\delta^{-\frac n{p+1}}}{\big(1+\frac{|x-e_n|}{\delta}\big)^{n-3}}+\frac{\delta^{-\frac n{p+1}}}{\big(1+\frac{|x+e_n|}{\delta}\big)^{n-3}}\Big).
\end{split}\end{align}

\end{Prop}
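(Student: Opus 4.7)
The plan is to define $\zeta_{1,\delta}$ implicitly by the equality stated in the proposition and analyze the simple boundary value problem it satisfies (the treatment of $\zeta_{2,\delta}$ being identical with $U\leftrightarrow V$ and $q+1\leftrightarrow p+1$). By construction $-\Delta W_{1,\delta}=-\Delta PW_{1,\delta}$ in $\Omega$, while the profiles $\varphi_{1,0}((e_n\mp x)/\delta)$ are harmonic in $\R^n\setminus\{\mp e_n\}$, so $\zeta_{1,\delta}$ is harmonic in $\Omega$. Since $PW_{1,\delta}$ has zero Neumann trace and mean zero, only a boundary defect plus a constant survives, and the whole proof reduces to estimating this defect on $\partial\Omega$ and propagating it inside via the Neumann Green's function of $B_1(0)$.

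First I would compute $\partial_\nu W_{1,\delta}$ on $\partial\Omega=S^{n-1}$. Writing $x=e_n+\delta y$ on the sphere forces $y_n=-\delta|y|^2/2$, and a direct computation using the radial form of $U_{0,1}$ together with $\nu(x)=x$ yields
\begin{align*}
\partial_\nu U_{e_n,\delta}(x)=\tfrac{1}{2}\delta^{-n/(q+1)}\,r\,U'_{0,1}(r)+O\bigl(\delta^{1-n/(q+1)}\,r^{2}|U''_{0,1}(r)|\bigr),\qquad r=|x-e_n|/\delta,
\end{align*}
with the analogous expression at $-e_n$. The leading piece is exactly the boundary datum prescribed for $\varphi_{1,0}$ in \eqref{phi0} (after the scaling $\delta^{-n/(q+1)+1}$), so the Neumann traces of $W_{1,\delta}$ and of the explicit correction cancel to leading order. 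The residual then involves only the second-order Taylor remainders, plus tangential derivatives $\delta\,y\cdot\nabla\varphi_{1,0}$, whose size is controlled by Lemma~\ref{lemasym'} and Remark~\ref{rem2.3}. Cross terms (e.g.\ the trace of $U_{-e_n,\delta}$ near $e_n$) are negligible by \eqref{asymV'}--\eqref{asymU}.

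Next I would represent $\zeta_{1,\delta}$ through the Neumann function of the ball and bound it pointwise by pairing that kernel with the boundary defect from the previous step. A scaling computation in the half-space shows that the harmonic extension in $\R^n_+$ with boundary data of size $|z|^{-(n-2)}$ (resp.\ $|z|^{-((n-2)p-2)}$ in case (ii) of {\bf(P)}) behaves like $|z|^{-(n-3)}$ (resp.\ $|z|^{-((n-2)p-3)}$) at infinity, and this same profile is transferred to $\zeta_{1,\delta}$ once the $\delta$-rescaling is undone. This produces the two-case bound in \eqref{zeta1}; the corresponding estimate for $|PW_{1,\delta}-W_{1,\delta}|$ then follows by adding the just-derived pointwise bound on the explicit $\varphi_{1,0}$-correction, which shares the same envelope. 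The second-component estimate has no case split because $V_{0,1}$ always decays like $r^{-(n-2)}$.

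For \eqref{zeta2} I would differentiate the defining identity in $\delta$: since the projection $P$ commutes with $\partial_\delta$, the function $\partial_\delta\zeta_{1,\delta}$ solves an analogous harmonic Neumann problem whose data is $\partial_\delta$ of the Step~1 residual, so it loses exactly one power of $\delta$ while keeping the same spatial profile, using also \eqref{deltaU}. The main technical obstacle will be the range $p<n/(n-2)$: here $U_{0,1}$ decays only like $r^{-((n-2)p-2)}$, strictly slower than the Newtonian rate, and one must verify that $\varphi_{1,0}$ is itself well-defined and enjoys the sharp decay $|z|^{-((n-2)p-3)}$ claimed above. This demands $(n-2)p-2>1$, which is precisely the structural reason behind the restriction $p>p_n$ in assumption (ii) (cf.\ the remark after \textbf{(P)}). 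Once this integrability is established, both cases are handled uniformly and the bounds \eqref{zeta1}--\eqref{zeta2} follow.
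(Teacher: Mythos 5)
Your proposal follows essentially the same route as the paper's proof: you define $\zeta_{i,\delta}$ implicitly, reduce to estimating the Neumann defect on $\partial\Omega$ after matching $\partial_\nu W_{i,\delta}$ against the Neumann datum prescribed for $\varphi_{i,0}$ in \eqref{phi0}, and propagate into the interior via potential-theoretic representation of the pure Neumann problem on the ball; the $\partial_\delta$-bounds then follow by differentiating the same identity, exactly as in the paper (which works on $\partial B_1(\pm e_n)$ after translating the bubbles to the origin and transferring back to $\partial\Omega$ by the isometries $\tau_1,\tau_2$, and which adds a final rescaling step to capture the uniform $O(\delta^{2-n/(q+1)})$ magnitude). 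One small clarification: the integrability/decay requirement $(n-2)p-2>1$ you isolate is indeed what makes $\varphi_{1,0}$ well-defined with decay rate $|x|^{-((n-2)p-3)}$, but it is strictly weaker than the hypothesis $p>p_n$ of \textbf{(P)}(ii); as the paper's remark after \textbf{(P)} indicates, $p_n$ is calibrated so that $\frac{p-1}{q+1}\,n>\frac12$, a stronger bound used later in the linear analysis and remainder estimates of Section 3, not in Proposition~\ref{lemexpansion} itself.
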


\medskip

The proof of Proposition \ref{lemexpansion} required the potential theory associated with the Neumann condition and the boundary analysis below.
\medskip

\subsubsection{{\bf Estimates for $\varphi_{i,0}$}}
Applying the Poisson kernel for the halfspace (see \cite{c-p}) and the fact that $n\geq4$, we have that
\begin{align*}
&\varphi_{1,0}(x)=\frac2{\omega_n(n-2)}\int_{\R^{n-1}}\frac{1}{|x'-y'|^{n-2}}\frac{|y|}2U'_{0,1}(|y|)\Big|_{y_n=0}dy',\\
&\varphi_{2,0}(x)=\frac2{\omega_n(n-2)}\int_{\R^{n-1}}\frac{1}{|x'-y'|^{n-2}}\frac{|y|}2V'_{0,1}(|y|)\Big|_{y_n=0}dy',
\end{align*}
where $\omega_n$ is the measure of the unit sphere in $\R^n$, $x=(x',x_n)=(x_1,\ldots,x_{n-1},x_n)$.

\medskip
(i) For $p>\frac n{n-2}$, by Lemma \ref{lemasym'}, for $r=|y|$,
$$\frac{|y|}2U'_{0,1}(|y|)\Big|_{y_n=0}
\leq \frac{C|y'|^2}{(1+|y'|^2)^{\frac n2}},\ \ \ \frac{|y|}2V'_{0,1}(|y|)\Big|_{y_n=0}
\leq \frac{C|y'|^2}{(1+|y'|^2)^{\frac n2}}.$$
Similar as in \cite{arxiv}, we can prove that for $i=1,2$ and $x\in\R^n_+$,
\begin{align*}
|\varphi_{i,0}(x)|&\leq C\int_{\R^{n-1}}\frac1{|x-y|^{n-2}}\frac{|y|}{(1+|y|)^{n-1}}dy\\
&\leq C\frac1{|x|^{n-3}}\int_{\R^{n-1}}\frac1{|\frac{x}{|x|}-z|^{n-2}}\frac{|z|}{(\frac{1}{|x|}+|z|)^{n-1}}dz.
\end{align*}
Actually, there holds that
\begin{align}\label{estphi0}
|\varphi_{i,0}(x)|\leq\frac C{(1+|x|)^{n-3}},\ \ |\nabla\varphi_{i,0}(x)|\leq\frac C{(1+|x|)^{n-2}},\ \
|D^2\varphi_{i,0}(x)|\leq\frac C{(1+|x|)^{n-1}}.
\end{align}
\medskip

(ii) For $p<\frac n{n-2}$,
$$\frac{|y|}2U'_{0,1}(|y|)\Big|_{y_n=0}
\leq \frac{C|y'|^2}{(1+|y'|^2)^{\frac {p(n-2)}2}},\ \ \ \frac{|y|}2V'_{0,1}(|y|)\Big|_{y_n=0}
\leq \frac{C|y'|^2}{(1+|y'|^2)^{\frac n2}}.$$
Then, for $x\in\R^n_+$, it is similar to get
\begin{align*}
|\varphi_{2,0}(x)|&\leq\frac C{(1+|x|)^{n-3}},\ \ |\nabla\varphi_{i,0}(x)|\leq\frac C{(1+|x|)^{n-2}},\ \
|D^2\varphi_{2,0}(x)|\leq\frac C{(1+|x|)^{n-1}}.
\end{align*}
While for $i=1$,
\begin{align*}
|\varphi_{1,0}(x)|&\leq C\int_{\R^{n-1}}\frac1{|x-y|^{n-2}}\frac{|y|}{(1+|y|)^{p(n-2)-1}}dy\\
&\leq C\frac1{|x|^{p(n-2)-3}}\int_{\R^{n-1}}\frac1{|\frac{x}{|x|}-z|^{n-2}}\frac{|z|}{(\frac{1}{|x|}+|z|)^{p(n-2)-1}}dz.
\end{align*}
Moreover,
\begin{align}\label{estphi0'}
|\varphi_{1,0}(x)|\leq\frac C{(1+|x|)^{p(n-2)-3}},\ \ |\nabla\varphi_{i,0}(x)|\leq\frac C{(1+|x|)^{p(n-2)-2}},\ \
|D^2\varphi_{1,0}(x)|\leq\frac C{(1+|x|)^{p(n-2)-1}}.
\end{align}
\medskip

\subsubsection{{\bf Boundary estimates }}
For $x\in\partial B_1(e_n)$, let $$z_{1,\delta}=\partial_\nu\Big(U_{0,\delta}(x)-\delta^{1-\frac n{q+1}}\varphi_{1,0}(\frac x\delta)\Big),\ \
z_{2,\delta}=\partial_\nu\Big(V_{0,\delta}(x)-\delta^{1-\frac n{p+1}}\varphi_{2,0}(\frac x\delta)\Big).$$
\begin{Lem}\label{lema1}
As $\delta\rightarrow0$, on $\partial B_1(e_n)\cap B^c_1(0)$, there holds that
\begin{align}\label{zbc}
\begin{split}
 z_{1,\delta}&=\begin{cases}\dis O (\delta^{\frac n{p+1}} ) &if\ p>\frac n{n-2}\\
\dis O (\delta^{\frac {pn}{q+1}} ) &if\ p<\frac n{n-2}
\end{cases},\ \ \
z_{2,\delta}=O (\delta^{\frac n{q+1}} ),\\
 \partial_\delta z_{1,\delta}&=\begin{cases}\dis O (\delta^{\frac n{p+1}-1} ) &if\ p>\frac n{n-2}\\
\dis O (\delta^{\frac {pn}{q+1}-1} ) &if\ p<\frac n{n-2}
\end{cases},\ \
\partial_\delta z_{2,\delta}=O (\delta^{\frac n{q+1}-1} );
\end{split}
\end{align}
While on $\partial B_1(e_n)\cap B_1(0)$
\begin{align}\label{a4}
\begin{split}
 z_{1,\delta}&=\begin{cases}\dis O\Big(\frac{\delta^{1-\frac n{q+1}}}{(1+\frac{|x'|}\delta)^{n-3}}\Big) &if\ p>\frac n{n-2}\\
\dis O\Big(\frac{\delta^{1-\frac n{q+1}}}{(1+\frac{|x'|}\delta)^{(n-2)p-3}}\Big) &if\ p<\frac n{n-2}
\end{cases},\ \ \ \ \
z_{2,\delta}=O\Big(\frac{\delta^{1-\frac n{p+1}}}{(1+\frac{|x'|}\delta)^{n-3}}\Big),\\
 \partial_\delta z_{1,\delta}&=\begin{cases}\dis O\Big(\frac{\delta^{-\frac n{q+1}}}{(1+\frac{|x'|}\delta)^{n-3}}\Big) &if\ p>\frac n{n-2}\\
\dis O\Big(\frac{\delta^{-\frac n{q+1}}}{(1+\frac{|x'|}\delta)^{(n-2)p-3}}\Big) &if\ p<\frac n{n-2}
\end{cases},\ \ \ \
\partial_\delta z_{2,\delta}=O\Big(\frac{\delta^{-\frac n{p+1}}}{(1+\frac{|x'|}\delta)^{n-3}}\Big).
\end{split}\end{align}

\end{Lem}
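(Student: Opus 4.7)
The plan is to unpack both terms of $z_{i,\delta}$ using the radial structure of $(U_{0,\delta},V_{0,\delta})$ and the rescaling of $\varphi_{i,0}$, then estimate separately on the outer piece $\partial B_1(e_n)\cap B_1^c(0)$ (where pure decay bounds suffice) and on the inner piece $\partial B_1(e_n)\cap B_1(0)$ (where a cancellation built into the boundary condition for $\varphi_{i,0}$ must be exposed). The geometric input I would record first is that for $x\in\partial B_1(e_n)$ with outward normal $\nu=x-e_n$, the identity $|x-e_n|=1$ gives $|x|^2=2x_n$ and hence $x\cdot\nu=x_n=|x|^2/2$. Combined with radial symmetry this yields the clean expressions
\begin{align*}
\partial_\nu U_{0,\delta}(x)=\delta^{-\frac{n}{q+1}-1}U'(|x|/\delta)\,\frac{|x|}{2},\qquad \partial_\nu V_{0,\delta}(x)=\delta^{-\frac{n}{p+1}-1}V'(|x|/\delta)\,\frac{|x|}{2},
\end{align*}
while for the rescaled harmonic term $\partial_\nu[\delta^{1-n/(q+1)}\varphi_{1,0}(x/\delta)]=\delta^{-n/(q+1)}\nabla\varphi_{1,0}(x/\delta)\cdot\nu$, and similarly for $\varphi_{2,0}$.

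On the outer piece $\partial B_1(e_n)\cap B_1^c(0)$ one has $|x|\geq 1$, so $|x|/\delta\to\infty$, and the proof reduces to inserting the sharp decay of $U',V'$ from Lemma \ref{lemasym'} together with the decay of $\nabla\varphi_{i,0}$ from \eqref{estphi0}-\eqref{estphi0'}. Using the critical hyperbola identity $n/(p+1)+n/(q+1)=n-2$, each of the two contributions to $z_{1,\delta}$ is separately of the asserted order: for $p>n/(n-2)$ both are $O(\delta^{n/(p+1)})$, while for $p<n/(n-2)$ the algebraic identity $(n-2)p-2=(p+1)n/(q+1)$ converts the bounds into $O(\delta^{pn/(q+1)})$; the estimate for $z_{2,\delta}$ uses only the $V$-decay, which is insensitive to the range of $p$. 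The $\partial_\delta$ estimates then come from differentiating the scaling prefactors, which costs one extra power of $\delta^{-1}$ and uses the second-derivative bounds of Remark \ref{rem2.3}.

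On the inner piece $\partial B_1(e_n)\cap B_1(0)$ the main ingredient is the cancellation engineered into \eqref{phi0}. I would parametrize $x=(x',x_n)$ with $x_n=1-\sqrt{1-|x'|^2}$, so that $x_n=O(|x'|^2)$ and $|x|^2=2x_n$, and decompose $\nu=(x',x_n-1)$. Taylor-expanding $\partial_{y_n}\varphi_{1,0}(y)$ in the $y_n$-variable around the base point $(x'/\delta,0)\in\partial\R^n_+$, the definition \eqref{phi0} cancels the principal part of $\partial_\nu U_{0,\delta}(x)$, leaving three types of error: (a) the discrepancy $|x|-|x'|=O(|x|^3)$ between the radial coordinates on $\partial B_1(e_n)$ and on $\partial\R^n_+$; (b) the Taylor remainder involving $D^2\varphi_{1,0}$ integrated over an interval of length $x_n/\delta=O(|x'|^2/\delta)$; (c) the tangential correction $x'\cdot\nabla'\varphi_{1,0}(x/\delta)$. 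Inserting the pointwise bounds \eqref{estphi0}-\eqref{estphi0'} together with the $U''$, $V''$ control from Remark \ref{rem2.3}, all three pieces are dominated by $C\delta^{1-n/(q+1)}(1+|x'|/\delta)^{-(n-3)}$ when $p>n/(n-2)$, and by the same expression with exponent $(n-2)p-3$ when $p<n/(n-2)$. The argument for $z_{2,\delta}$ is parallel using $V$ and $\varphi_{2,0}$, whose decays do not depend on the range of $p$, and the $\partial_\delta$ estimates follow verbatim with an extra $\delta^{-1}$ from differentiating the scaling.

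The delicate step will be the inner region: the leading-order cancellation is forced by the very construction of $\varphi_{i,0}$, but one must carefully track the three correction terms and match them against the required weights $(1+|x'|/\delta)^{-\alpha}$ uniformly for $|x'|\in[0,1]$, with two distinct behaviors depending on which side of $n/(n-2)$ the exponent $p$ lies. The slower decay of $\nabla\varphi_{1,0}$ in the regime $p<n/(n-2)$ recorded in \eqref{estphi0'} is precisely what drops the exponent from $n-3$ to $(n-2)p-3$ in the bound for $z_{1,\delta}$, and it is the reason one cannot treat both ranges by a single uniform argument.
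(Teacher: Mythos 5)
Your proposal is correct and follows essentially the same route as the paper: the identity $x\cdot\nu = x_n = |x|^2/2$ on $\partial B_1(e_n)$, the decay bounds plus the critical hyperbola on the outer piece, and the leading-order cancellation forced by \eqref{phi0} with three correction terms on the inner piece (the paper's decomposition $\partial_\nu\eta_{1,\delta}(x)=\partial_{-e_n}\eta_{1,\delta}(x',0)+(\nabla\eta_{1,\delta}(x)-\nabla\eta_{1,\delta}(x',0))\cdot\nu(x)+\nabla\eta_{1,\delta}(x',0)\cdot(\nu(x)+e_n)$ matches your items (a)--(c)). You also correctly isolate the reason the two ranges of $p$ must be treated separately, namely the weaker decay $|\nabla\varphi_{1,0}|\lesssim(1+|y|)^{-((n-2)p-2)}$ in \eqref{estphi0'}, and the algebraic identity $(n-2)p-2=(p+1)n/(q+1)$ that converts the exponent to $pn/(q+1)$ on the outer piece.
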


\begin{proof}
On $\partial B_1(e_n)\cap B^c_1(0)$, from the asymptotic behavior of $U_{0,1}$ and $V_{0,1}$ as in Lemma \ref{lemasym}, Lemma \ref{lemasym'}, Remark \ref{rem2.3} and using \eqref{estphi0},
we get that when $p>\frac n{n-2}$,
\begin{align*}
&|\partial_\nu U_{0,\delta}(x)|=O(\delta^{n-2-\frac{n}{q+1}}),\ \  |\partial_\nu V_{0,\delta}(x)|=O(\delta^{n-2-\frac{n}{p+1}}),\\
&|\partial_\delta\partial_\nu U_{0,\delta}(x)|=O(\delta^{\frac{n}{p+1}-1}),\ \  |\partial_\delta\partial_\nu V_{0,\delta}(x)|=O(\delta^{\frac{n}{q+1}-1}),\\
&\Big|\delta^{1-\frac n{q+1}}\partial_\nu\Big(\varphi_{1,0}(\frac x\delta)\Big)\Big|=O(\delta^{n-2-\frac{n}{q+1}}),\ \
\Big|\partial_\delta\Big[\delta^{1-\frac n{q+1}}\partial_\nu\Big(\varphi_{1,0}(\frac x\delta)\Big)\Big]\Big|=O(\delta^{\frac{n}{p+1}-1}),\\
&\Big|\delta^{1-\frac n{p+1}}\partial_\nu\Big(\varphi_{2,0}(\frac x\delta)\Big)\Big|=O(\delta^{n-2-\frac{n}{p+1}}),\ \
\Big|\partial_\delta\Big[\delta^{1-\frac n{p+1}}\partial_\nu\Big(\varphi_{2,0}(\frac x\delta)\Big)\Big]\Big|=O(\delta^{\frac{n}{q+1}-1}).
\end{align*}
While when $p<\frac n{n-2}$,
\begin{align*}
&|\partial_\nu U_{0,\delta}(x)|=O(\delta^{\frac{pn}{q+1}}),\ \  |\partial_\nu V_{0,\delta}(x)|=O(\delta^{\frac{n}{q+1}}),\\
&|\partial_\delta\partial_\nu U_{0,\delta}(x)|=O(\delta^{\frac{pn}{q+1}-1}),\ \  |\partial_\delta\partial_\nu V_{0,\delta}(x)|=O(\delta^{\frac{n}{q+1}-1}),\\
&\Big|\delta^{1-\frac n{q+1}}\partial_\nu\Big(\varphi_{1,0}(\frac x\delta)\Big)\Big|=O(\delta^{\frac{pn}{q+1}}),\ \
\Big|\partial_\delta\Big[\delta^{1-\frac n{q+1}}\partial_\nu\Big(\varphi_{1,0}(\frac x\delta)\Big)\Big]\Big|=O(\delta^{\frac{pn}{q+1}-1}),\\
&\Big|\delta^{1-\frac n{p+1}}\partial_\nu\Big(\varphi_{2,0}(\frac x\delta)\Big)\Big|=O(\delta^{\frac{n}{q+1}}),\ \
\Big|\partial_\delta\Big[\delta^{1-\frac n{p+1}}\partial_\nu\Big(\varphi_{2,0}(\frac x\delta)\Big)\Big]\Big|=O(\delta^{\frac{n}{q+1}-1}).
\end{align*}
Hence on $\partial B_1(e_n)\cap B^c_1(0)$, we obtain \eqref{zbc}.
\medskip

Next, we estimate on  the set $\partial B_1(e_n)\cap B_1(0)$.
Note that $$x_n=\rho(x')=1-\sqrt{1-|x'|^2}=\frac{|x'|^2}2+O(|x'|^3)\ \ as\ |x'|\rightarrow0.$$
Moreover, the exterior normal on the boundary is
\begin{align*}
\nu(x)=\frac{(\nabla\rho,-1)}{\sqrt{1+|\nabla\rho|^2}}=(x',-\sqrt{1-|x'|^2}).
\end{align*}
Thus, for $x\in\partial B_1(e_n)\cap B_1(0)$, when $p>\frac n{n-2}$,

\begin{align}\label{nuU}
\begin{split}
\partial_\nu U_{0,\delta}&=\nabla U_{0,\delta}\cdot\nu(x)
=\delta^{-\frac n{q+1}-1}U'(\frac r\delta)\frac{x\cdot\nu(x)}r\\
&=\delta^{-\frac n{q+1}-1}U'(\frac r\delta)\frac{1-\sqrt{1-|x'|^2}}r\\
&=\frac12\delta^{-\frac n{q+1}-1}U'(\frac r\delta)\frac{|x'|^2}{|x|}+O\Big(\delta^{-\frac n{q+1}-1}U'(\frac r\delta)\frac{|x'|^3}{|x|}\Big)\\
&=\frac{|x'|}2\delta^{-\frac n{q+1}-1}U'(\frac r\delta)|_{x_n=0}+O\Big(\frac{\delta^{-\frac n{q+1}+1}}{(1+\frac{|x'|}\delta)^{n-3}}\Big),
\end{split}
\end{align}
where the last equality is because Lemma \ref{lemasym'} and \eqref{UV''},
\begin{align*}
&\Big|\frac12\delta^{-\frac n{q+1}-1}U'(\frac r\delta)\frac{|x'|^2}{|x|}-\frac{|x'|}2\delta^{-\frac n{q+1}-1}U'(\frac r\delta)|_{x_n=0}\Big|\\
&\leq \frac{C\delta^{-\frac n{q+1}-1}|x'|^2x_n^2}2\Big|U''(\frac{|x'|}\delta)\frac{\delta^{-3}}{\frac{|x'|^2}{\delta^2}}-U'(\frac{|x'|}\delta)\frac{\delta^{-3}}{\frac{|x'|^3}{\delta^3}}\Big|\\
&\leq C\delta^{n-2-\frac n{q+1}}\frac{|x'|^4}{(\delta+|x'|)^n}
=o\Big(\frac{\delta^{-\frac n{q+1}+1}}{(1+\frac{|x'|}\delta)^{n-3}}\Big).
\end{align*}
While  when $p<\frac n{n-2}$,  since now
\begin{align*}
&\Big|\frac12\delta^{-\frac n{q+1}-1}U'(\frac r\delta)\frac{|x'|^2}{|x|}-\frac{|x'|}2\delta^{-\frac n{q+1}-1}U'(\frac r\delta)|_{x_n=0}\Big|\\
&\leq C\delta^{(n-2)p-2-\frac n{q+1}}\frac{|x'|^4}{(\delta+|x'|)^{(n-2)p}}
=o\Big(\frac{\delta^{-\frac n{q+1}+1}}{(1+\frac{|x'|}\delta)^{(n-2)p-3}}\Big),
\end{align*}
then
\begin{align}\label{nuU'}
\begin{split}
\partial_\nu U_{0,\delta}&=\nabla U_{0,\delta}\cdot\nu(x)
=\frac{|x'|}2\delta^{-\frac n{q+1}-1}U'(\frac r\delta)|_{x_n=0}+O\Big(\frac{\delta^{-\frac n{q+1}+1}}{(1+\frac{|x'|}\delta)^{(p(n-2)-3}}\Big).
\end{split}
\end{align}

Similarly
\begin{align}\label{nuV}
\partial_\nu V_{0,\delta}=\frac{|x'|}2\delta^{-\frac n{p+1}-1}V'(\frac r\delta)|_{x_n=0}+O\Big(\frac{\delta^{-\frac n{p+1}+1}}{(1+\frac{|x'|}\delta)^{n-3}}\Big).
\end{align}

Now let $\eta_{i,\delta}=\delta^{-\frac n{q+1}+1}\varphi_{i,0}(\frac x\delta)$ for $i=1,2$.
Using the definition of $\varphi_{i,0}$, for $x\in\partial B_1(e_n)\cap B_1(0)$, it holds that in the case of $p>\frac n{n-2}$,
\begin{align}
\begin{split}
&\partial_\nu\eta_{1,\delta}(x)=\nabla\eta_{1,\delta}(x)\cdot\nu(x)\\&
=\partial_{-e_n}\eta_{1,\delta}(x',0)+(\nabla\eta_{1,\delta}(x)-\nabla\eta_{1,\delta}(x',0))\cdot\nu(x)+\nabla\eta_{1,\delta}(x',0)\cdot(\nu(x)+e_n)\\
&=-\delta^{-\frac n{q+1}}\frac{\varphi_{1,0}}{\partial x_n}(\frac x\delta)\Big|_{x_n=0}+O\Big(\frac{\delta^{-\frac n{q+1}}|x'|}{(1+\frac{|x'|}\delta)^{n-2}}\Big)
\\&=\frac{|x'|}2\delta^{-\frac n{q+1}-1}U'(\frac r\delta)|_{x_n=0}+O\Big( \frac{\delta}{(1+\frac{|x'|}\delta)^{n-3}}\Big),
\end{split}
\end{align}
where we estimate by \eqref{phi0} that
\begin{align*}
&\delta^{\frac n{q+1}}|(\nabla\eta_{1,\delta}(x)-\nabla\eta_{1,\delta}(x',0))\cdot\nu(x)|
\leq|\nabla\varphi_{1,0}(\frac{x}\delta)-\nabla\varphi_{1,0}(\frac{x'}\delta,0)|\\
&\leq\Big|\Big(\frac{\partial^2\varphi_{1,0}}{\partial x_1\partial x_n}(\frac{x'}{\delta},\theta_1\frac{x_1}{\delta}),\ldots,\frac{\partial^2\varphi_{1,0}}{\partial x_n^2}(\frac{x'}{\delta},\theta_n\frac{x_1}{\delta})\Big)
\Big|\frac{|x_n|}\delta\\
&\leq\frac{C|x_n|}{\delta(1+\frac{|x'|}\delta)^{n-1}}=O\Big(\frac{|x'|^2}{\delta(1+\frac{|x'|}\delta)^{n-1}}\Big)
=O\Big(\frac{\delta}{(1+\frac{|x'|}\delta)^{n-3}}\Big)
\end{align*}
and
\begin{align*}
&\delta^{\frac n{q+1}}|\nabla\eta_{1,\delta}(x',0)\cdot(\nu(x)+e_n)|
\leq|\nabla\varphi_{1,0}(\frac{x'}\delta,0)||\nu(x)+e_n|\\
&\leq C\frac{|(x',1-\sqrt{1-|x'|^2})|}{(1+\frac{|x'|}\delta)^{n-2}}=O\Big(\frac{|x'|}{(1+\frac{|x'|}\delta)^{n-2}}\Big)
=O\Big( \frac{\delta}{(1+\frac{|x'|}\delta)^{n-3}}\Big).
\end{align*}
While in the case of $p<\frac n{n-2}$,
\begin{align}
\begin{split}
&\partial_\nu\eta_{1,\delta}(x)=\nabla\eta_{1,\delta}(x)\cdot\nu(x)\\
&=-\delta^{-\frac n{q+1}}\frac{\varphi_{1,0}}{\partial x_n}(\frac x\delta)\Big|_{x_n=0}+O\Big(\frac{\delta^{-\frac n{q+1}}|x'|}{(1+\frac{|x'|}\delta)^{p(n-2)-2}}\Big)
\\&=\frac{|x'|}2\delta^{-\frac n{q+1}-1}U'(\frac r\delta)|_{x_n=0}+O\Big( \frac{\delta}{(1+\frac{|x'|}\delta)^{p(n-2)-3}}\Big),
\end{split}
\end{align}
since in this case,
\begin{align*}
&\delta^{\frac n{q+1}}|(\nabla\eta_{1,\delta}(x)-\nabla\eta_{1,\delta}(x',0))\cdot\nu(x)|
\leq|\nabla\varphi_{1,0}(\frac{x}\delta)-\nabla\varphi_{1,0}(\frac{x'}\delta,0)|\\
&\leq\frac{C|x_n|}{\delta(1+\frac{|x'|}\delta)^{p(n-2)-1}}=O\Big(\frac{|x'|^2}{\delta(1+\frac{|x'|}\delta)^{p(n-2)-1}}\Big)
=O\Big(\frac{\delta}{(1+\frac{|x'|}\delta)^{p(n-2)-3}}\Big)
\end{align*}
and
\begin{align*}
&\delta^{\frac n{q+1}}|\nabla\eta_{1,\delta}(x',0)\cdot(\nu(x)+e_n)|
\leq|\nabla\varphi_{1,0}(\frac{x'}\delta,0)||\nu(x)+e_n|\\
&\leq C\frac{|(x',1-\sqrt{1-|x'|^2})|}{(1+\frac{|x'|}\delta)^{p(n-2)-2}}=O\Big(\frac{|x'|}{(1+\frac{|x'|}\delta)^{p(n-2)-2}}\Big)
=O\Big( \frac{\delta}{(1+\frac{|x'|}\delta)^{p(n-2)-3}}\Big).
\end{align*}

\end{proof}
\medskip

\subsubsection{{\bf The expansion}}
\begin{proof}[\textbf{Proof of Proposition \ref{lemexpansion}}]

For $x\in\Omega=B_1(0)$, set
\begin{align*}
&\zeta_{1,\delta}=PW_{1,\delta}-W_{1,\delta}-\delta^{1-\frac n{q+1}}\Big(\varphi_{1,0}(\frac{e_n-x}\delta)-\varphi_{1,0}(\frac{e_n+x}\delta)\Big)\\
&\zeta_{2,\delta}=PW_{2,\delta}-W_{2,\delta}-\delta^{1-\frac n{p+1}}\Big(\varphi_{2,0}(\frac{e_n-x}\delta)-\varphi_{2,0}(\frac{e_n+x}\delta)\Big).
\end{align*}
Then we have
\begin{align*}
-\Delta\zeta_{1,\delta}=0\ \  in\ \Omega,\ \ \ \partial_\nu\zeta_{1,\delta}=z_{1,\delta}'+z_{1,\delta}''\ \ on\ \partial\Omega,\ \ \ \int_\Omega\zeta_{1,\delta}=0,\\
-\Delta\zeta_{2,\delta}=0\ \  in\ \Omega,\ \ \ \partial_\nu\zeta_{2,\delta}=z_{2,\delta}'+z_{2,\delta}''\ \ on\ \partial\Omega,\ \ \ \int_\Omega\zeta_{2,\delta}=0,
\end{align*}
where
\begin{align*}
z_{1,\delta}'=\partial_\nu\Big[\delta^{1-\frac n{q+1}}\varphi_{1,0}(\frac{e_n-x}\delta)-U_{e_n,\delta}\Big],\ \
z_{2,\delta}'=\partial_\nu\Big[\delta^{1-\frac n{p+1}}\varphi_{2,0}(\frac{e_n-x}\delta)-V_{e_n,\delta}\Big],\\
z_{1,\delta}''=\partial_\nu\Big[U_{-e_n,\delta}-\delta^{1-\frac n{q+1}}\varphi_{1,0}(\frac{e_n+x}\delta)\Big],\ \
z_{2,\delta}''=\partial_\nu\Big[V_{e_n,\delta}-\delta^{1-\frac n{p+1}}\varphi_{2,0}(\frac{e_n+x}\delta)\Big].
\end{align*}

Using the transformation $\tau_1(x)=e_n-x$, we have $\tau_1(B_1(0))=B_1(e_n)$. For $x\in\partial B_1(0)$, denoting $\nu(x)$ as the exterior unitary normal on $\partial B_1(0)$
at $x$, then $\nu(\tau_1(x))$ is the exterior unitary normal on $\partial B_1(e_n)$ at $\tau_1(x)$ and it is easy that $\nu(x)=-\nu(\tau_1(x))$.
For $y\in B_1(e_n)$, set $$\chi_{1,\delta}(y)=\delta^{1-\frac n{q+1}}\varphi_{1,0}(\frac y\delta)-U_{0,\delta}(y),\ \
\chi_{2,\delta}(y)=\delta^{1-\frac n{p+1}}\varphi_{2,0}(\frac y\delta)-V_{0,\delta}(y).$$
Then
$$z_{i,\delta}'=\partial_\nu[\chi_{i,\delta}(\tau_1(x))]=\nabla[\chi_{i,\delta}(\tau_1(x))]\cdot\nu(x)=\nabla\chi_{i,\delta}(\tau_1(x))\cdot\nu(\tau_1(x)),\ \ i=1,2.$$
Therefore, \eqref{phi0}, \eqref{estphi0} and \eqref{estphi0'} imply that
on $\partial B_1(0)\cap B^c_1(e_n)$
\begin{align*}
\begin{split}
&z_{1,\delta}'=\begin{cases}\dis O (\delta^{\frac n{p+1}} ) &if\ p>\frac n{n-2}\\
\dis O (\delta^{\frac {pn}{q+1}} ) &if\ p<\frac n{n-2}
\end{cases},\ \ \ \ \
z_{2,\delta}'=O (\delta^{\frac n{q+1}} ).
\end{split}
\end{align*}
While on $\partial B_1(0)\cap B_1(e_n)$
\begin{align*}
&z_{1,\delta}'=\begin{cases}\dis O\Big(\frac{\delta^{1-\frac n{q+1}}}{(1+\frac{|x'|}\delta)^{n-3}}\Big) &if\ p>\frac n{n-2}\\
\dis O\Big(\frac{\delta^{1-\frac n{q+1}}}{(1+\frac{|x'|}\delta)^{(n-2)p-3}}\Big)  &if\ p<\frac n{n-2}
\end{cases},\ \ \
z_{2,\delta}'=O\Big(\frac{\delta^{1-\frac n{p+1}}}{(1+\frac{|x'|}\delta)^{n-3}}\Big).
\end{align*}

Analogously, we use the isometry $\tau_2(x)=x+e_n=x-(-e_n)$ and obtain the same estimates for $z_{i,\delta}''(i=1,2)$ on
$\partial B_1(0)\cap B^c_1(-e_n)$ and $\partial B_1(0)\cap B_1(-e_n)$ respectively.

Combining the above estimates, there hold that on $\partial B_1(0)\cap B_1(\pm e_n)$ and using the expansion of $x_n$ on $\partial B_1(0)\cap B_1(\pm e_n)$
\begin{align}\label{a14}
\begin{split}
&\partial_\nu \zeta_{1,\delta}=\begin{cases}\dis O\Big(\frac{\delta^{1-\frac n{q+1}}}{(1+\frac{|x'|}\delta)^{(n-2)p-3}}\Big)
=O\Big(\frac{\delta^{1-\frac n{q+1}}}{(1+\frac{|x\mp e_n|}\delta)^{(n-2)p-3}}\Big) &if\ p<\frac n{n-2}\\
\dis O\Big(\frac{\delta^{1-\frac n{q+1}}}{(1+\frac{|x'|}\delta)^{n-3}}\Big)
=O\Big(\frac{\delta^{1-\frac n{q+1}}}{(1+\frac{|x\mp e_n|}\delta)^{n-3}}\Big) &if\ p>\frac n{n-2}
\end{cases},\\
&\partial_\nu \zeta_{2,\delta}=O\Big(\frac{\delta^{1-\frac n{p+1}}}{(1+\frac{|x'|}\delta)^{n-3}}\Big)
=O\Big(\frac{\delta^{1-\frac n{p+1}}}{(1+\frac{|x\mp e_n|}\delta)^{n-3}}\Big),
\end{split}\end{align}
while on $\partial B_1(0)\cap B^c_1(e_n)$
\begin{align*}
\begin{split}
&\partial_\nu \zeta_{1,\delta}=\begin{cases}O (\delta^{\frac {pn}{q+1}} ) &if\ p<\frac n{n-2}\\
O (\delta^{\frac n{q+1}} ) &if\ p>\frac n{n-2}
\end{cases},\ \ \ \
\partial_\nu \zeta_{2,\delta}=O (\delta^{\frac n{p+1}} ).
\end{split}
\end{align*}
In particular, on $\partial B_1(0)$, it holds that
\begin{align*}
&\partial_\nu \zeta_{1,\delta}
=\begin{cases}\dis O\Big(\frac{\delta^{1-\frac n{q+1}}}{(1+\frac{|x-e_n|}\delta)^{(n-2)p-3}}+\frac{\delta^{1-\frac n{q+1}}}{(1+\frac{|x+e_n|}\delta)^{(n-2)p-3}}\Big)  &if\ p<\frac n{n-2}\\
\dis O\Big(\frac{\delta^{1-\frac n{q+1}}}{(1+\frac{|x-e_n|}\delta)^{n-3}}+\frac{\delta^{1-\frac n{q+1}}}{(1+\frac{|x+e_n|}\delta)^{n-3}}\Big)  &if\ p>\frac n{n-2}
\end{cases},\\
&\partial_\nu \zeta_{2,\delta}
=O\Big(\frac{\delta^{1-\frac n{p+1}}}{(1+\frac{|x-e_n|}\delta)^{n-3}}+\frac{\delta^{1-\frac n{p+1}}}{(1+\frac{|x+ e_n|}\delta)^{n-3}}\Big).
\end{align*}

Using the integral representation formulas \cite{f-j-r}, we know that
\begin{align*}
& \zeta_{1,\delta}
= \begin{cases}\dis O\Big(\frac{\delta^{1-\frac n{q+1}}}{(1+\frac{|x-e_n|}\delta)^{(n-2)p-3}}+\frac{\delta^{1-\frac n{q+1}}}{(1+\frac{|x+e_n|}\delta)^{(n-2)p-3}}\Big)  &if\ p<\frac n{n-2}\\
\dis O\Big(\frac{\delta^{1-\frac n{q+1}}}{(1+\frac{|x-e_n|}\delta)^{n-3}}+\frac{\delta^{1-\frac n{q+1}}}{(1+\frac{|x+e_n|}\delta)^{n-3}}\Big)  &if\ p>\frac n{n-2}
\end{cases},\\
&\zeta_{2,\delta}
=O\Big(\frac{\delta^{1-\frac n{p+1}}}{(1+\frac{|x-e_n|}\delta)^{n-3}}+\frac{\delta^{1-\frac n{p+1}}}{(1+\frac{|x+ e_n|}\delta)^{n-3}}\Big),
\end{align*}
which combined with \eqref{estphi0} and \eqref{estphi0'} leads to the same estimate for $|PW_{i,\delta}-W_{i,\delta}|$ for $i=1,2$, concluding \eqref{zeta1} in Proposition \ref{lemexpansion}.

Finally, set
$$\hat\zeta_{1,\delta}(x)=\delta^{\frac n{q+1}}\zeta_{1,\delta}(e_n-\delta x),\ \ \hat\zeta_{2,\delta}(x)=\delta^{\frac n{p+1}}\zeta_{2,\delta}(e_n-\delta x).$$
Then by \eqref{a14}, for $i=1,2$,
\begin{align}
-\Delta\hat\zeta_{i,\delta}=0\ in\ \Omega_\delta=B_{\frac1\delta}(\frac{e_n}\delta),\ \ \ \ \int_{\Omega_\delta}\hat\zeta_{i,\delta}=0,\ \ \ \
\partial_\nu\hat\zeta_{i,\delta}=O(\delta^2)\ \ on\ \partial\Omega_\delta\subset\R^n_+.
\end{align}
Hence, by the explicit Green's formula in balls \cite{s-t-t,r-w}, we obtain that $\hat\zeta_{i,\delta}=O(\delta^2)$, which gives
$\zeta_{1,\delta}=O(\delta^{2-\frac n{q+1}})$, $\zeta_{2,\delta}=O(\delta^{2-\frac n{p+1}})$.

\medskip
As for the estimates for $\partial_\delta PW_{i,\delta},i=1,2$, the proof can follow the previous steps based on \eqref{a4}.

\end{proof}

\medskip

\medskip
\subsection{The Ansatz}
Denote that $$f_{2,\epsilon}(t)=|t|^{p_\epsilon-1}t,\ \ \ f_{1,\epsilon}(t)=|t|^{q_\epsilon-1}t,$$
where $p_\epsilon=p+\alpha\epsilon,q_\epsilon=q+\beta\epsilon$.
We are aimed to search for a solution of the form
$$(u_1,u_2)=(PW_{1,\delta}+\phi_1,PW_{2,\delta}+\phi_2)$$
with $\delta=d\epsilon$, $d\in(\eta,\frac1\eta)$ for some small $0<\eta<1$ and $(\phi_1,\phi_2)\in H_\epsilon$.

For simplicity, we denote $K(h_1,h_2)=(Kh_1,Kh_2)$ with the same $K$ as in \eqref{1.8}. Hence,
\begin{align}\label{eqK}
(PW_{1,\delta}+\phi_1,PW_{2,\delta}+\phi_2)=K(f_{2,\epsilon}(PW_{2,\delta}+\phi_2),f_{1,\epsilon}(PW_{1,\delta}+\phi_1))
\end{align}
or equivalently,
\begin{align*}
&(\phi_1,\phi_2)-K(f'_{2,\epsilon}(PW_{2,\delta})\phi_2,f'_{1,\epsilon}(PW_{1,\delta})\phi_1)\\&
=K\Big(f_{2,\epsilon}(PW_{2,\delta}+\phi_2)-f'_{2,\epsilon}(PW_{2,\delta})\phi_2-f_{2,\epsilon}(PW_{2,\delta}),\\
&\qquad \quad f_{1,\epsilon}(PW_{1,\delta}+\phi_1)-f'_{1,\epsilon}(PW_{1,\delta})\phi_1-f_{1,\epsilon}(PW_{1,\delta})\Big)\\
& \quad+K(f_{2,\epsilon}(PW_{2,\delta}),f_{1,\epsilon}(PW_{1,\delta}))-(PW_{1,\delta},PW_{2,\delta}).
\end{align*}

Define
\begin{align*}
&K_{d,\epsilon}:=span\{(\delta\partial_\delta PW_{1,\delta},\delta\partial_\delta PW_{2,\delta})\}
\end{align*}
and
\begin{align*}
E_{d,\epsilon}:=\Big\{(\phi_1,\phi_2)\in H_\epsilon:
\int_\Omega\nabla\phi_1\cdot\nabla\partial_\delta PW_{2,\delta}+\nabla\phi_2\cdot\nabla\partial_\delta PW_{1,\delta}=0\Big\}.
\end{align*}

Set the projections: $$\Pi_\epsilon: H_\epsilon\rightarrow K_{d,\epsilon},\ \ \ \Pi^\bot_\epsilon: H_\epsilon\rightarrow E_{d,\epsilon}.$$

\begin{Lem}
Let $0<\eta<1$ and $\delta=d\epsilon$ with $\eta<d<\frac1\eta$. As $\epsilon\rightarrow0$, for any $(\phi_1,\phi_2)\in H_\epsilon$,
 $$ \|\Pi^\bot_\epsilon(\phi_1,\phi_2)\|_\epsilon\leq C\Big(\|(\phi_1,\phi_2)\|+\|(\phi_1,\phi_2)\|_{L^{q_\epsilon+1}\times L^{p_\epsilon+1}(\Omega)}\Big).$$

\end{Lem}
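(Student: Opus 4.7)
The plan is to exploit that $K_{d,\epsilon}$ is one-dimensional and spanned by $Z:=(\delta\partial_\delta PW_{1,\delta},\delta\partial_\delta PW_{2,\delta})$. Every $(\phi_1,\phi_2)\in H_\epsilon$ decomposes uniquely as $\Pi_\epsilon(\phi_1,\phi_2)=c_\epsilon Z$ and $\Pi_\epsilon^\bot(\phi_1,\phi_2)=(\phi_1,\phi_2)-c_\epsilon Z$, with the scalar $c_\epsilon$ forced by the orthogonality defining $E_{d,\epsilon}$:
\[
c_\epsilon=\frac{N_\epsilon}{D_\epsilon},\qquad
N_\epsilon:=\int_\Omega\!\bigl(\nabla\phi_1\!\cdot\!\nabla\partial_\delta PW_{2,\delta}+\nabla\phi_2\!\cdot\!\nabla\partial_\delta PW_{1,\delta}\bigr),\qquad
D_\epsilon:=2\delta\!\int_\Omega\!\nabla\partial_\delta PW_{1,\delta}\!\cdot\!\nabla\partial_\delta PW_{2,\delta}.
\]
By the triangle inequality $\|\Pi_\epsilon^\bot(\phi_1,\phi_2)\|_\epsilon\le\|(\phi_1,\phi_2)\|_\epsilon+|c_\epsilon|\,\|Z\|_\epsilon$, it suffices to prove (a) $|D_\epsilon|\ge c_0\delta^{-1}$, (b) $|N_\epsilon|\le C\delta^{-1}\|(\phi_1,\phi_2)\|_{L^{q_\epsilon+1}\times L^{p_\epsilon+1}}$, and (c) $\|Z\|_\epsilon\le C$ uniformly for small $\epsilon$.

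For (b), since $\partial_\nu PW_{2,\delta}=0$ on $\partial\Omega$ and this survives differentiation in $\delta$, integration by parts gives
\[
\int_\Omega\nabla\phi_1\cdot\nabla\partial_\delta PW_{2,\delta}=\int_\Omega\phi_1(-\Delta\partial_\delta PW_{2,\delta})=\int_\Omega\phi_1\,\partial_\delta(U_{e_n,\delta}^q-U_{-e_n,\delta}^q).
\]
Writing this as $\delta^{-1}\!\int_\Omega\phi_1\cdot\delta\partial_\delta(U^q_{\pm e_n,\delta})$ and applying Hölder's inequality with exponents $(q_\epsilon+1,(q_\epsilon+1)/q_\epsilon)$, the task reduces to bounding $\|\delta\partial_\delta U^q_{\pm e_n,\delta}\|_{L^{(q_\epsilon+1)/q_\epsilon}}$. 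Using the identity $\delta\partial_\delta U_{\pm e_n,\delta}=-\delta^{-n/(q+1)}\Psi^0\bigl((x\mp e_n)/\delta\bigr)$ and a direct change of variables, this norm is $O(\delta^{n\beta\epsilon/((q+1)(q_\epsilon+1))})=O(1)$ since $\delta=d\epsilon$, and the integral $\int_{\mathbb R^n}|U^{q-1}\Psi^0|^{(q_\epsilon+1)/q_\epsilon}$ is finite by Lemma~\ref{lemasym'} (this is where condition~\textbf{(P)}~(ii) matters when $p<n/(n-2)$, guaranteeing $U^{q-1}\Psi^0$ decays fast enough at infinity). The symmetric estimate for the $\phi_2$ piece completes (b).

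For (a), the same integration by parts and rescaling $y=(x-e_n)/\delta$ give
\[
D_\epsilon=-\frac{2}{\delta}\int_\Omega(\delta\partial_\delta PW_{1,\delta})\cdot\bigl(\delta\Delta\partial_\delta PW_{2,\delta}\bigr)dx.
\]
Substituting the asymptotic identities $\delta\partial_\delta W_{i,\delta}(x)=-\delta^{-n/(q_i+1)}\Psi_i^0(\cdot)$ (with the obvious notation) together with Proposition~\ref{lemexpansion}, which controls the correction $\zeta_{i,\delta}$ and $\partial_\delta\zeta_{i,\delta}$ as strictly lower order in $\delta$, and exploiting the $x_n$-odd symmetry so that the two bubbles at $\pm e_n$ each contribute independently (the cross terms are $O(\delta^{n-2})$ by the decay of $U,\Psi^0$), one obtains
\[
\delta D_\epsilon=4q\int_{\mathbb R^n_+}U^{q-1}(\Psi^0)^2\,dy+o(1)\qquad\text{as }\epsilon\to0,
\]
and the limit integral is strictly positive (the integrand is nonnegative and not identically zero, as guaranteed by the explicit form of $\Psi^0$ in Lemma~\ref{lemnonde}). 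This yields (a) with $c_0>0$. The verification of (c) is similar and easier: $\|\Delta(\delta\partial_\delta PW_{i,\delta})\|_{L^{s_i}}$ is scale-invariant while $\|\delta\partial_\delta PW_{i,\delta}\|_{L^{q_\epsilon+1}}$ and $\|\delta\partial_\delta PW_{i,\delta}\|_{L^{p_\epsilon+1}}$ carry only a factor $\delta^{O(\epsilon)}=O(1)$.

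Combining (a)--(c), $|c_\epsilon|\le C\|(\phi_1,\phi_2)\|_{L^{q_\epsilon+1}\times L^{p_\epsilon+1}}$, and therefore $|c_\epsilon|\,\|Z\|_\epsilon\le C\|(\phi_1,\phi_2)\|_\epsilon$, which closes the estimate. The main obstacle is (a): one must handle the boundary/projection corrections $\zeta_{i,\delta}$ carefully so that they remain subleading uniformly in $\delta$, and this is where the two sub-cases of \textbf{(P)} must be treated separately---fortunately, Proposition~\ref{lemexpansion} was written in a form that provides exactly the decay needed in both cases, so the leading term of $D_\epsilon$ is determined by the limit problem via Lemma~\ref{lemnonde}.
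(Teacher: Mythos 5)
Your proof is correct and takes essentially the same approach as the paper: decompose $\Pi_\epsilon(\phi_1,\phi_2)=c\,Z$, bound the coefficient via the non-degeneracy estimate $\delta^2\int_\Omega\nabla\partial_\delta PW_{1,\delta}\cdot\nabla\partial_\delta PW_{2,\delta}\geq \tilde C>0$ (which reduces to $\int_{\R^n} U^{q-1}(\Psi^0)^2>0$ after using Proposition~\ref{lemexpansion} to discard the projection corrections), and verify $\|Z\|_\epsilon=O(1)$. The one genuine divergence is in how you bound the numerator $N_\epsilon$: you integrate by parts onto $\partial_\delta PW_{i,\delta}$ and Hölder with exponents $(q_\epsilon+1,(q_\epsilon+1)/q_\epsilon)$, which yields $|c_\epsilon|\leq C\|(\phi_1,\phi_2)\|_{L^{q_\epsilon+1}\times L^{p_\epsilon+1}}$, whereas the paper integrates by parts onto $\phi_i$ to get $|c_0|\leq C\|(\phi_1,\phi_2)\|$ (the $W^{2,\cdot}$ norm); since both norms appear on the right of the lemma, either works, and your variant is in fact a bit more self-contained because the paper first invokes the topological-complement result of \cite{KP} to assert boundedness of $\Pi_\epsilon^\bot$ in $\|\cdot\|$ before passing to the $L^{q_\epsilon+1}\times L^{p_\epsilon+1}$ part, while your triangle-inequality route handles both components of $\|\cdot\|_\epsilon$ simultaneously once $\|Z\|_\epsilon\leq C$ is established.
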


\begin{proof}
Since by the definition of the projection, $ \|\Pi^\bot_\epsilon(\phi_1,\phi_2)\|\leq C\|(\phi_1,\phi_2)\|$, then we are sufficed to consider the
 $L^{q_\epsilon+1}\times L^{p_\epsilon+1}$-norm to show that
\begin{align}\label{212}
\begin{split}
\|\Pi^\bot_\epsilon(\phi_1,\phi_2)\|_{L^{q_\epsilon+1}\times L^{p_\epsilon+1}}&\leq C\Big(\|(\phi_1,\phi_2)\|_{L^{q_\epsilon+1}\times L^{p_\epsilon+1}}+\|\Pi_\epsilon(\phi_1,\phi_2)\|_{L^{q_\epsilon+1}\times L^{p_\epsilon+1}}\Big)\\
&\leq C\Big(\|(\phi_1,\phi_2)\|_{L^{q_\epsilon+1}\times L^{p_\epsilon+1}}+\|(\phi_1,\phi_2)\|\Big).
\end{split}\end{align}

\smallskip
Firstly, by Proposition \ref{lemexpansion} and Lemma \ref{lemb3}, there hold that
\begin{align*}
\|\delta\partial_\delta PW_{1,\delta}\|_{L^{q+1}}\leq C(\|U_{e_n,\delta}\|_{L^{q+1}}+\|U_{e_n,\delta}\|_{L^{q+1}})=O(1)
\end{align*}
and
\begin{align*}
\|\delta\partial_\delta PW_{2,\delta}\|_{L^{p+1}}\leq C(\|V_{e_n,\delta}\|_{L^{p+1}}+\|V_{e_n,\delta}\|_{L^{p+1}})=O(1),
\end{align*}
which imply that
\begin{align}\label{212'}
\|(\delta\partial_\delta PW_{1,\delta},\delta\partial_\delta PW_{2,\delta})\|_{L^{q_\epsilon+1}\times L^{p_\epsilon+1}}=
O(\epsilon^{-\alpha\epsilon}+\epsilon^{-\beta\epsilon})=O(1).\end{align}
\smallskip

Secondly, we show that
writing 
\begin{align}\label{2.27}
\Pi_\epsilon(\phi_1,\phi_2)=c_0(\delta\partial_\delta PW_{1,\delta},\delta\partial_\delta PW_{2,\delta}),
\end{align}
 then
\begin{align}\label{c0}
2c_0\int_\Omega\nabla\delta\partial_\delta PW_{1,\delta}\cdot\nabla\delta\partial_\delta PW_{2,\delta}=\int_\Omega\nabla\phi_1\cdot\nabla\delta\partial_\delta PW_{2,\delta}
+\nabla\phi_2\cdot\nabla\delta\partial_\delta PW_{1,\delta}
\end{align}
 implies $c_0\leq C\|(\phi_1,\phi_2)\|$.

\smallskip
In fact, we write the left hand side of \eqref{c0} as
\begin{align}\label{2.13}
\begin{split}
&\int_\Omega\nabla\delta\partial_\delta PW_{1,\delta}\cdot\nabla\delta\partial_\delta PW_{2,\delta}
=\delta^2\int_\Omega(-\Delta)\partial_\delta PW_{1,\delta}\partial_\delta PW_{2,\delta}\\&
=\delta^2\int_\Omega\partial_\delta(V_{e_n,\delta}^p-V_{-e_n,\delta}^p)\partial_\delta PW_{2,\delta}\\&
=\delta^2\int_\Omega p(V_{e_n,\delta}^{p-1}\partial_\delta V_{e_n,\delta}-V_{-e_n,\delta}^{p-1}\partial_\delta V_{-e_n,\delta})(\partial_\delta V_{e_n,\delta}-\partial_\delta V_{-e_n,\delta})\\
&+\delta^2\int_\Omega p(V_{e_n,\delta}^{p-1}\partial_\delta V_{e_n,\delta}-V_{-e_n,\delta}^{p-1}\partial_\delta V_{-e_n,\delta})(\partial_\delta PV_{e_n,\delta}-\partial_\delta V_{e_n,\delta}+\partial_\delta PV_{-e_n,\delta}-\partial_\delta V_{-e_n,\delta})\\
&=\delta^2\int_\Omega p(V_{e_n,\delta}^{p-1}(\partial_\delta V_{e_n,\delta})^2+V_{-e_n,\delta}^{p-1}(\partial_\delta V_{-e_n,\delta})^2)+\Theta(\delta)
\end{split}\end{align}
where
\begin{align*}
&\Theta(\delta) =-\delta^2\int_\Omega p(V_{e_n,\delta}^{p-1}\partial_\delta V_{e_n,\delta}\partial_\delta V_{-e_n,\delta}+V_{-e_n,\delta}^{p-1}\partial_\delta V_{-e_n,\delta}\partial_\delta V_{e_n,\delta})\\
&+\delta^2\int_\Omega p(V_{e_n,\delta}^{p-1}\partial_\delta V_{e_n,\delta}-V_{-e_n,\delta}^{p-1}\partial_\delta V_{-e_n,\delta})(\partial_\delta PV_{e_n,\delta}-\partial_\delta V_{e_n,\delta}+\partial_\delta PV_{-e_n,\delta}-\partial_\delta V_{-e_n,\delta}).
\end{align*}

By Proposition \ref{lemexpansion}, it holds that
\begin{align*}
&|\partial_\delta PV_{e_n,\delta}-\partial_\delta V_{e_n,\delta}+\partial_\delta PV_{-e_n,\delta}-\partial_\delta V_{-e_n,\delta}|
\leq C(v_{1,\delta,e_n}+v_{1,\delta,-e_n})
\end{align*}
where $v_{i,\delta,\xi},i=1,2$ are as in Lemma \ref{lemb3}.
Hence, in view of \eqref{deltaU},
\begin{align*}
 |\Theta(\delta)| \leq &C\int_\Omega
(V_{e_n,\delta}^{p-1} V_{e_n,\delta}V_{-e_n,\delta}+V_{-e_n,\delta}^{p-1} V_{-e_n,\delta} V_{e_n,\delta})+ (V_{e_n,\delta}^{p}+V_{-e_n,\delta}^{p})(v_{1,\delta,e_n}+v_{1,\delta,-e_n})\\=&o(1).
\end{align*}
Moreover, by Lemma \ref{lemasym'}, we know that
\begin{align*}
\partial_\delta V_{\xi,\delta}\sim-\frac{\delta^{-\frac{n}{p+1}-1}\frac{|x-e_n|^2}{\delta^2}}{(1+\frac{|x-e_n|^2}{\delta^2})^{\frac{n}2}}.
\end{align*}
Then for some $\tilde C>0$,
\begin{align}\label{tildeC}
\begin{split}
&\delta^2\int_\Omega p(V_{e_n,\delta}^{p-1}(\partial_\delta V_{e_n,\delta})^2+V_{-e_n,\delta}^{p-1}(\partial_\delta V_{-e_n,\delta})^2)\\
&\geq
C\delta^2\int_\Omega\frac{\delta^{-\frac{n(p-1)}{p+1}-\frac{2n}{p+1}-2}\frac{|x-e_n|^4}{\delta^4}}{(1+\frac{|x-e_n|^2}{\delta^2})^{\frac{(n-2)(p-1)}2+n}}dx
=C\int_{\frac{\Omega-e_n}\delta}\frac{|y|^4}{(1+|y|^2)^{\frac{(n-2)(p-1)}{2}+n}}dy\geq\tilde C.
\end{split}\end{align}
Hence, from \eqref{c0} and \eqref{2.13}, we get that
$c_0\leq C\|(\phi_1,\phi_2)\|$,
which combined with \eqref{2.27}, gives
$$\|\Pi_\epsilon(\phi_1,\phi_2)\|_{L^{q_\epsilon+1}\times L^{p_\epsilon+1}}\leq C\|(\phi_1,\phi_2)\|.$$
Thus  \eqref{212'} holds true.

\end{proof}
\smallskip
One can refer to Lemma 3.1 in \cite{KP} for a similar argument showing that
 $K_{d,\epsilon}$ and $E_{d,\epsilon}$ are topological complements of each other.
\medskip

Decompose \eqref{eqK} as follows
\begin{align}
\Pi_\epsilon
(PW_{1,\delta}+\phi_1,PW_{2,\delta}+\phi_2)=\Pi_\epsilon\circ K(f_{2,\epsilon}(PW_{2,\delta}+\phi_2),f_{1,\epsilon}(PW_{1,\delta}+\phi_1))\label{Pi},\\
\Pi_\epsilon^\bot
(PW_{1,\delta}+\phi_1,PW_{2,\delta}+\phi_2)=\Pi_\epsilon^\bot\circ K(f_{2,\epsilon}(PW_{2,\delta}+\phi_2),f_{1,\epsilon}(PW_{1,\delta}+\phi_1)).\label{Pibot}
\end{align}
Moreover, \eqref{Pibot} can be written as
\begin{align}\label{eqbot}
L_{d,\epsilon}(\phi_1,\phi_2)=N_{d,\epsilon}(\phi_1,\phi_2)+R_{d,\epsilon},
\end{align}
where
\begin{align*}
&L_{d,\epsilon}(\phi_1,\phi_2)=\Pi_\epsilon^\perp\Big((\phi_1,\phi_2)-K(f'_{2,\epsilon}(PW_{2,\delta})\phi_2,f'_{1,\epsilon}(PW_{1,\delta})\phi_1)\Big),\\
&N_{d,\epsilon}(\phi_1,\phi_2)=\Pi_\epsilon^\perp\circ K\Big(f_{2,\epsilon}(PW_{2,\delta}+\phi_2)-f'_{2,\epsilon}(PW_{2,\delta})\phi_2-f_{2,\epsilon}(PW_{2,\delta}),\\
&\qquad\qquad \qquad\qquad+f_{1,\epsilon}(PW_{1,\delta}+\phi_1)-f'_{1,\epsilon}(PW_{1,\delta})\phi_1-f_{1,\epsilon}(PW_{1,\delta})\Big),\\
&R_{d,\epsilon}=\Pi_\epsilon^\perp\Big(K(f_{2,\epsilon}(PW_{2,\delta}),f_{1,\epsilon}(PW_{1,\delta}))-(PW_{1,\delta},PW_{2,\delta})\Big).
\end{align*}

\medskip

\section{Reduction to a finite dimensional problem}
In this section, we are devoted to solve \eqref{eqbot} and prove the following result.

\begin{Prop}\label{prop3.1}
For every $0<\eta<1$ sufficiently small, there exist some $\epsilon_0>0$ and $C>0$ such that if $\epsilon\in(0,\epsilon_0)$ and $d\in(\eta,\frac1\eta)$, then the
problem \eqref{eqbot} has a unique solution $(\phi_1,\phi_2)=(\phi_{1,d,\epsilon},\phi_{2,d,\epsilon})\in E_{d,\epsilon}$ satisfying that for every $\sigma>0$, as $\epsilon\rightarrow0$
$
\|(\phi_1,\phi_2)\|_\epsilon=o(\epsilon^{1-\sigma})
$ uniformly in $d\in(\eta,\frac1\eta)$. Moreover the map $(\eta,\frac1\eta)\rightarrow E_{d,\epsilon}, d\mapsto(\phi_{1,d,\epsilon},\phi_{2,d,\epsilon})$ is of class $C^1$.

\end{Prop}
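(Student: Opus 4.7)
The plan is to solve \eqref{eqbot} by a standard contraction-mapping argument on a small ball in $E_{d,\epsilon}$. Writing the equation as
\begin{equation*}
(\phi_1,\phi_2)=T_{d,\epsilon}(\phi_1,\phi_2):=L_{d,\epsilon}^{-1}\bigl(N_{d,\epsilon}(\phi_1,\phi_2)+R_{d,\epsilon}\bigr),
\end{equation*}
I would establish four ingredients: (i) uniform invertibility of $L_{d,\epsilon}:E_{d,\epsilon}\to E_{d,\epsilon}$ with $\|L_{d,\epsilon}^{-1}\|\le C$ independent of $\epsilon$ and $d\in(\eta,\frac1\eta)$; (ii) the sharp error bound $\|R_{d,\epsilon}\|_\epsilon=o(\epsilon^{1-\sigma})$; (iii) a Lipschitz/quadratic estimate for $N_{d,\epsilon}$ on small balls; (iv) $C^1$ dependence on $d$.

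For (i), I would argue by contradiction. Assume there exist $\epsilon_k\to0$, $d_k\to d_*\in[\eta,\frac1\eta]$ and $(\phi_{1,k},\phi_{2,k})\in E_{d_k,\epsilon_k}$ with $\|(\phi_{1,k},\phi_{2,k})\|_{\epsilon_k}=1$ and $\|L_{d_k,\epsilon_k}(\phi_{1,k},\phi_{2,k})\|_{\epsilon_k}\to0$. Rescaling around $e_n$ via $\tilde\phi_{1,k}(y)=\delta_k^{n/(q+1)}\phi_{1,k}(e_n-\delta_k y)$, $\tilde\phi_{2,k}(y)=\delta_k^{n/(p+1)}\phi_{2,k}(e_n-\delta_k y)$ with $\delta_k=d_k\epsilon_k$, elliptic regularity together with the decay afforded by $W^{2,(p+1)/p}\hookrightarrow L^{q+1}$, $W^{2,(q+1)/q}\hookrightarrow L^{p+1}$ gives a limit $(\Psi,\Phi)$ defined on the half-space $\{y_n>0\}$. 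The antisymmetry in $x_n$ imposed by $H_\epsilon$ lets me reflect evenly across $\partial\mathbb{R}^n_+$ (the bubble $U_{0,1}$ being radial), producing a solution of the linearized limit system \eqref{eqspan} on $\mathbb{R}^n$ that is even in $x_1,\dots,x_n$. Lemma \ref{lemnonde} then forces $(\Psi,\Phi)\in\mathrm{span}\{(\partial_\delta U_{0,\delta}|_{\delta=1},\partial_\delta V_{0,\delta}|_{\delta=1})\}$, while the orthogonality condition defining $E_{d_k,\epsilon_k}$ passes to the limit and kills even this direction. An analogous argument at $-e_n$ and on the complement $\Omega\setminus(\Omega_+\cup\Omega_-)$ (where the right-hand side is uniformly small thanks to Lemma \ref{lemasym'}) then yields $\|(\phi_{1,k},\phi_{2,k})\|_{\epsilon_k}\to0$, contradicting the normalization.

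For (ii), the error $R_{d,\epsilon}$ splits into a projection piece and a supercritical piece. Writing
\begin{equation*}
f_{2,\epsilon}(PW_{2,\delta})-V_{e_n,\delta}^p+V_{-e_n,\delta}^p = \bigl[f_{2,\epsilon}(PW_{2,\delta})-|PW_{2,\delta}|^{p-1}PW_{2,\delta}\bigr]+\bigl[|PW_{2,\delta}|^{p-1}PW_{2,\delta}-V_{e_n,\delta}^p+V_{-e_n,\delta}^p\bigr],
\end{equation*}
the first bracket is of order $\epsilon|\log\epsilon|$ in the dual norm, and the second is handled with Proposition \ref{lemexpansion} together with the cross-interaction estimates from the appendix Lemma \ref{lemb3}. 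Here the two cases of hypothesis (P) manifest through the different decay exponents $n-3$ versus $(n-2)p-3$; the lower bound $p>p_n$ in case (ii) is precisely what guarantees $(p-1)n/(q+1)>1/2$ so the cross-interaction between the bubbles at $\pm e_n$ still sits strictly below $\epsilon$. The mean value theorem and Sobolev embeddings give (iii): $\|N_{d,\epsilon}(\phi)-N_{d,\epsilon}(\psi)\|_\epsilon\le C(\|\phi\|_\epsilon+\|\psi\|_\epsilon)^{\min\{1,p-1,q-1\}}\|\phi-\psi\|_\epsilon$ with $N_{d,\epsilon}(0)=0$. Combining (i)--(iii), $T_{d,\epsilon}$ is a contraction on $\{\|(\phi_1,\phi_2)\|_\epsilon\le\epsilon^{1-\sigma}\}\cap E_{d,\epsilon}$, producing the unique fixed point $(\phi_{1,d,\epsilon},\phi_{2,d,\epsilon})$. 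The $C^1$ dependence on $d$ comes from the implicit function theorem applied to $F(d,\phi):=L_{d,\epsilon}(\phi)-N_{d,\epsilon}(\phi)-R_{d,\epsilon}$, whose $\phi$-derivative at the solution differs from the invertible $L_{d,\epsilon}$ by an operator of norm $O(\epsilon^{1-\sigma})$.

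The principal obstacle is step (i). The Hamiltonian coupling forces me to track both components simultaneously through a dual-pair blow-up, and because the concentration points $\pm e_n$ lie on $\partial\Omega$ the rescaled domain is a half-space rather than $\mathbb{R}^n$; bridging the half-space limit back to Lemma \ref{lemnonde} crucially exploits the antisymmetry built into $H_\epsilon$. A secondary technical point is that the norm $\|\cdot\|_\epsilon$ includes the $L^{q_\epsilon+1}\times L^{p_\epsilon+1}$ contribution with slowly varying exponents, so uniform estimates require care (e.g.\ uniform bounds $\epsilon^{-\alpha\epsilon}=O(1)$ as already seen around \eqref{212'}).
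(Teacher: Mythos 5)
Your proposal follows essentially the same route as the paper: a contraction-mapping argument built on (i) uniform invertibility of $L_{d,\epsilon}$ proved by contradiction, blow-up to a half-space problem, even reflection, and the non-degeneracy of Lemma \ref{lemnonde} together with the symmetries of $H_\epsilon$ and the orthogonality in $E_{d,\epsilon}$; (ii) an error estimate for $R_{d,\epsilon}$; (iii) a higher-order estimate for $N_{d,\epsilon}$; and (iv) the implicit function theorem for the $C^1$ dependence on $d$. This mirrors the paper's Propositions \ref{propL} and \ref{prop3.1}, Lemmas \ref{lemR} and the nonlinear estimate, and the concluding Banach fixed point argument.

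One point worth sharpening. You set up the contraction on a ball of radius $\epsilon^{1-\sigma}$ and assert $\|R_{d,\epsilon}\|_\epsilon=o(\epsilon^{1-\sigma})$ for \emph{every} $\sigma>0$; but the bottleneck you yourself identify (the bubble cross-interaction estimate from Lemma \ref{lemb3}, giving $(p-1)n/(q+1)>\tfrac12$ by hypothesis (P)) only yields $\|R_{d,\epsilon}\|_\epsilon=O(\epsilon^{\frac12+\gamma'})$ for a fixed $\gamma'\in(0,\tfrac12)$ determined by $p,q,n$ --- which is what Lemma \ref{lemR} proves. For $\sigma<\tfrac12-\gamma'$ the proposed ball $\{\|(\phi_1,\phi_2)\|_\epsilon\le\epsilon^{1-\sigma}\}$ does not contain $L_{d,\epsilon}^{-1}R_{d,\epsilon}$ and the fixed-point map is not a self-map. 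The paper instead runs the contraction on the ball of radius $2C_0\epsilon^{\frac12+\gamma'}$, and the ``for every $\sigma>0$'' in the statement should be read in the range $\sigma>\tfrac12-\gamma'$; this is also exactly what the energy expansion in Section 4 requires ($\|\phi\|_\epsilon^2=o(\epsilon)$). So choose the contraction ball consistent with the provable rate $O(\epsilon^{\frac12+\gamma'})$, not with the target rate $\epsilon^{1-\sigma}$ for unrestricted $\sigma$.

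Two further remarks on the invertibility step (i), which you correctly flag as the main difficulty. First, after blow-up you need a lower bound to derive the contradiction; in the paper this is the positivity estimate (Step 2 of Proposition \ref{propL}) showing $\liminf_k\bigl(\|f'_{2,\epsilon_k}(PW_{2,\delta_k})u_{2,k}\|_{L^{\frac{p+1}p}}+\|f'_{1,\epsilon_k}(PW_{1,\delta_k})u_{1,k}\|_{L^{\frac{q+1}q}}\bigr)>0$, proved via the dual Sobolev chain $W^{2,\frac{p+1}p}\hookrightarrow L^{q+1}$ etc. Second, it is not the orthogonality in $E_{d,\epsilon}$ alone that kills the dilation direction in the limit: the argument in Step 3 also uses the oddness in $x_n$ (to identify the contributions at $\pm e_n$) and the fact that $u_{i,k}=\phi_{i,k}-h_{i,k}-z_{i,k}$ with $\|h_k\|,\|z_k\|\to0$ from Step 1. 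Both ingredients are implicit in your sketch but should be made explicit.
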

\medskip

\subsection{Invertibility of $L_{d,\epsilon}$}

\begin{Prop}\label{propL}
For every $0<\eta<1$ sufficiently small, there exists some $\epsilon_0>0$ and $C>0$ such that if $\epsilon\in(0,\epsilon_0)$ and $d\in(\eta,\frac1\eta)$,
then $$\|L_{d,\epsilon}(\phi_1,\phi_2)\|_\epsilon\geq C\|(\phi_1,\phi_2)\|_\epsilon,\ \ \forall(\phi_{1},\phi_{2})\in E_{d,\epsilon}.$$
Moreover, $L_{d,\epsilon}$ is invertible in $E_{d,\epsilon}.$
\end{Prop}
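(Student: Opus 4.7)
\medskip

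The plan is a standard contradiction/blow-up argument tailored to the strongly coupled, sign-changing, Neumann setting. Suppose the estimate fails: there exist $\epsilon_n\to 0$, $d_n\in(\eta,1/\eta)$ and $(\phi_{1,n},\phi_{2,n})\in E_{d_n,\epsilon_n}$ with $\|(\phi_{1,n},\phi_{2,n})\|_{\epsilon_n}=1$ but $\|L_{d_n,\epsilon_n}(\phi_{1,n},\phi_{2,n})\|_{\epsilon_n}\to 0$. Writing $L_{d_n,\epsilon_n}(\phi_{1,n},\phi_{2,n})=(\psi_{1,n},\psi_{2,n})$, using the decomposition into $K_{d,\epsilon}\oplus E_{d,\epsilon}$, and applying $(-\Delta)$ to both components, I obtain a system of the form
\begin{align*}
-\Delta\phi_{1,n}&=p_{\epsilon_n}|PW_{2,\delta_n}|^{p_{\epsilon_n}-1}\phi_{2,n}+c_n\,\partial_\delta(V^p_{e_n,\delta_n}-V^p_{-e_n,\delta_n})+h_{1,n},\\
-\Delta\phi_{2,n}&=q_{\epsilon_n}|PW_{1,\delta_n}|^{q_{\epsilon_n}-1}\phi_{1,n}+c_n\,\partial_\delta(U^q_{e_n,\delta_n}-U^q_{-e_n,\delta_n})+h_{2,n}
\end{align*}
with Neumann conditions and zero-average, where $h_{i,n}\to 0$ in the appropriate dual norms and $c_n$ is the Lagrange multiplier. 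Testing against $(\delta_n\partial_\delta PW_{2,\delta_n},\delta_n\partial_\delta PW_{1,\delta_n})$ and using the lower bound $\tilde C>0$ established in \eqref{tildeC}, together with $\|(\phi_{1,n},\phi_{2,n})\|\le 1$, I get $c_n\to 0$.

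\medskip

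Next I perform the blow-up analysis. By the symmetries defining $H_\epsilon$, it is enough to localize near $\xi_n=e_n$. Rescale $\tilde\phi_{1,n}(y):=\delta_n^{n/(q+1)}\phi_{1,n}(e_n+\delta_n y)$ and $\tilde\phi_{2,n}(y):=\delta_n^{n/(p+1)}\phi_{2,n}(e_n+\delta_n y)$, defined on $\Omega_n:=\delta_n^{-1}(\Omega-e_n)$, which exhausts the half-space $\mathbb R^n_-=\{y_n<0\}$ (since $e_n\in\partial\Omega$ and the boundary flattens). Using Proposition \ref{lemexpansion} to control $PW_{i,\delta_n}$ by $U_{e_n,\delta_n}$, $V_{e_n,\delta_n}$ plus lower-order terms (the far bubble at $-e_n$ contributes uniformly negligible amounts in $\Omega_+$), the rescaled system converges in $W^{2,s}_{loc}\times W^{2,s}_{loc}$ to a solution $(\tilde\phi_1,\tilde\phi_2)$ of the limit system
\begin{align*}
-\Delta\tilde\phi_1=pV^{p-1}\tilde\phi_2,\qquad -\Delta\tilde\phi_2=qU^{q-1}\tilde\phi_1\ \text{in }\mathbb R^n_-,\qquad \partial_\nu\tilde\phi_1=\partial_\nu\tilde\phi_2=0\ \text{on }\partial\mathbb R^n_-.
\end{align*}
A reflection across $\partial\mathbb R^n_-$ extends $(\tilde\phi_1,\tilde\phi_2)$ to a solution on all of $\mathbb R^n$. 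The symmetry $u_i(x',-x_n)=-u_i(x',x_n)$ is compatible with reflection about $y_n=0$ after translating $e_n$ to the origin (and the full set of reflections in $x_1,\dots,x_{n-1}$ make the limit even in those variables).

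\medskip

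Invoking the non-degeneracy Lemma \ref{lemnonde}, the limit $(\tilde\phi_1,\tilde\phi_2)$ lies in the span of $\{(\Psi_{0,1}^0,\Phi_{0,1}^0),(\Psi_{0,1}^l,\Phi_{0,1}^l):l=1,\dots,n\}$. The evenness in $y_1,\dots,y_{n-1}$ kills the $l=1,\dots,n-1$ directions, and the Neumann condition at $y_n=0$ (equivalently the original evenness/oddness symmetries after reflection) kills the $\partial_{y_n}$ direction; the orthogonality encoded in $(\phi_{1,n},\phi_{2,n})\in E_{d_n,\epsilon_n}$, passed to the limit via the exact computation \eqref{tildeC}, kills the dilation direction $(\Psi^0,\Phi^0)$. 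Therefore $(\tilde\phi_1,\tilde\phi_2)\equiv 0$. A standard splitting of $\Omega=B_{R\delta_n}(e_n)\cup B_{R\delta_n}(-e_n)\cup\text{rest}$ then shows that the main contribution to $\|(\phi_{1,n},\phi_{2,n})\|_{\epsilon_n}$ comes from the concentration regions, where the rescaled sequence goes to zero, while the exterior region is handled by elliptic $L^s$-estimates with small right-hand side (using that $f'_{2,\epsilon}(PW_{2,\delta})$ and $f'_{1,\epsilon}(PW_{1,\delta})$ have small measure support in the appropriate Lorentz-type norm). This contradicts $\|(\phi_{1,n},\phi_{2,n})\|_{\epsilon_n}=1$. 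Invertibility then follows from the a priori estimate together with a Fredholm/degree argument: $L_{d,\epsilon}$ is a compact perturbation of the identity on $E_{d,\epsilon}$ (via $K$), so injectivity implies surjectivity.

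\medskip

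The main obstacle I expect is the interplay between the two components' different scaling rates $\delta^{-n/(q+1)}$ and $\delta^{-n/(p+1)}$ in the case $p<n/(n-2)$: the slower decay of $U$ at infinity (Lemma \ref{lemasym}) means the usual $L^{q+1}$ estimates in the exterior region of the bubbles must be replaced by the sharper weighted bounds from Lemma \ref{lemasym'} and Proposition \ref{lemexpansion}, and the contribution of $PW$ vs.\ $W$ in the rescaled limit equation must be tracked carefully so that the limit system is genuinely the unperturbed one of Lemma \ref{lemnonde}. The requirement $p>p_n$ in condition (P)(ii) enters exactly here, guaranteeing enough integrability of the linearized coefficients to make the blow-up and the orthogonal decomposition rigorous.
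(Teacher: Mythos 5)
Your overall strategy—contradiction, blow-up to the linearized limit problem on the half-space, even reflection, and then Lemma \ref{lemnonde} combined with the symmetry group of $H_\epsilon$ and the orthogonality in $E_{d,\epsilon}$ to kill the dilation direction—matches the paper's proof of Proposition \ref{propL} closely. The identification of $c_n\to 0$ via \eqref{tildeC}, the rescaling rates $\delta^{-n/(q+1)}$, $\delta^{-n/(p+1)}$, and the role of condition \textbf{(P)}(ii) are also correct.

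However, there is a genuine gap in your final contradiction step, and it is precisely the part where the paper does its most delicate work. You assert that the rescaled sequence ``goes to zero'' in the concentration regions and that therefore the main contribution to $\|(\phi_{1,n},\phi_{2,n})\|_{\epsilon_n}$ vanishes there. But the blow-up yields only \emph{weak} convergence of $\hat u_{i,n}$ to zero in $\dot W^{1,p^*}\times\dot W^{1,q^*}$ (and correspondingly in $L^{q+1}\times L^{p+1}$); weak convergence to zero gives no control whatsoever on the $L^{q_\epsilon+1}\times L^{p_\epsilon+1}$ or $\dot W^{2,\cdot}$ norms in the concentration region, so the ``main contribution vanishes'' argument does not close. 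Your heuristic that the norm of $(\phi_{1,n},\phi_{2,n})$ is carried by the concentration regions is in fact not the mechanism used: the paper first (Step 2) exploits the structure of the linearized equation and an elliptic bootstrap to establish the strictly positive lower bound
\[
\liminf_{k}\Bigl(\|f'_{2,\epsilon_k}(PW_{2,\delta_k})u_{2,k}\|_{L^{\frac{p+1}p}}+\|f'_{1,\epsilon_k}(PW_{1,\delta_k})u_{1,k}\|_{L^{\frac{q+1}q}}\Bigr)>0,
\]
where $u_{i,k}=\phi_{i,k}-h_{i,k}-z_{i,k}$, by showing that otherwise $\|(u_{1,k},u_{2,k})\|_{\epsilon_k}\to 0$, contradicting $\|(\phi_{1,k},\phi_{2,k})\|_{\epsilon_k}=1$. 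Only then does it show (Step 4) that this same quantity is $o(1)$—and the key trick there is that $\hat u_{2,k}^{(p+1)/p}\rightharpoonup 0$ weakly in $L^{p}(\mathbb R^n)$ paired with the \emph{fixed} integrable weight $(1+|y|)^{-(n-2)(p+1)(p-1)/p}\in L^{p/(p-1)}(\mathbb R^n)$ (and the analogous $(n-2)p-2$ exponent when $p<n/(n-2)$), so that the pairing vanishes by definition of weak convergence; the outer region $\Omega_c$ is killed by the smallness of $\|f'_{i,\epsilon_k}(PW_{i,\delta_k})\|_{L^{(p+1)/(p-1)}(\Omega_c)}$. Without articulating this two-sided squeeze—lower bound from the equation plus upper bound from weak convergence tested against decaying weights—your proof does not reach a contradiction, and the ``standard splitting / Lorentz-norm'' description you give is too vague to substitute for it.
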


\begin{proof}
By contradiction, we assume that there exists some $\eta\in(0,1), d_k\in(\eta,\frac1\eta)$ with $d_k\rightarrow d_0,\epsilon_k\rightarrow0$ and $(\phi_{1,k},\phi_{2,k})\in E_{d_k,\epsilon_k}$ such that $\|(\phi_{1,k},\phi_{2,k})\|_\epsilon=1$ and $\|(h_{1,k},h_{2,k})\|_\epsilon\rightarrow0$,
where $(h_{1,k},h_{2,k})=L_{d_k,\epsilon_k}(\phi_{1,k},\phi_{2,k})\in E_{d_k,\epsilon_k}$.

We denote that $\delta_k=d_k\epsilon_k$, $$Z_{i,\delta}=\delta\partial_{\delta} W_{i,\delta}.$$
 Hence
\begin{align}\label{eqzk}
(\phi_{1,k},\phi_{2,k})-K(f'_{2,\epsilon}(PW_{2,\delta})\phi_{2,k},f'_{1,\epsilon}(PW_{1,\delta})\phi_{1,k})=(h_{1,k},h_{2,k})+(z_{1,k},z_{2,k})
\end{align}
with $c_k\in\R$, $$(z_{1,k},z_{2,k})=c_k(PZ_{1,\delta_k},PZ_{2,\delta_k})\in K_{d_k,\epsilon_k}.$$
\medskip

{\bf Step 1.}  We prove that $\|(z_{1,k},z_{2,k})\|_{\epsilon_k}\rightarrow0$ as $k\rightarrow+\infty$.

Since $$\int_\Omega\nabla\phi_{1,k}\cdot\nabla z_{2,k}+\nabla\phi_{2,k}\cdot\nabla z_{1,k}=0=\int_\Omega\nabla h_{1,k}\cdot\nabla z_{2,k}+\nabla h_{2,k}\cdot\nabla z_{1,k},$$
 we test \eqref{eqzk} with $(z_{1,k},z_{2,k})$ to get that
\begin{align}\label{3.7}
\int_\Omega\nabla z_{1,k}\cdot\nabla z_{2,k}+\nabla z_{2,k}\cdot\nabla z_{1,k}=-c_k\int_\Omega f'_{2,\epsilon_k}(PW_{2,\delta_k})\phi_{2,k}PZ_{2,\delta_k}
+f'_{1,\epsilon_k}(PW_{1,\delta_k})\phi_{1,k}PZ_{1,\delta_k}.
\end{align}
By denoting $$\partial_\delta V_{\pm e_n,\delta_k}=\partial_\delta V_{\pm e_n,\delta}|_{\delta=\delta_k}, \ \ \partial_\delta U_{\pm e_n,\delta_k}=\partial_\delta U_{\pm e_n,\delta}|_{\delta=\delta_k},$$
it holds that
\begin{align*}
0=&\int_\Omega\nabla\phi_{1,k}\cdot\nabla z_{2,k}+\nabla\phi_{2,k}\cdot\nabla z_{1,k}=c_k\int_\Omega (-\Delta PZ_{2,\delta}\phi_{1,k}-\Delta PZ_{1,\delta}\phi_{2,k})\\&=
c_k\int_\Omega (f'_{2,\epsilon_k}(V_{e_n,\delta_k})\partial_\delta V_{e_n,\delta_k}-f'_{2,\epsilon_k}(V_{-e_n,\delta_k})\partial_\delta V_{-e_n,\delta_k})\phi_{2,k}\\
&\qquad+ (f'_{1,\epsilon_k}(U_{e_n,\delta_k})\partial_\delta U_{e_n,\delta_k}-f'_{1,\epsilon_k}(U_{-e_n,\delta_k})\partial_\delta U_{-e_n,\delta_k})\phi_{1,k}.
\end{align*}
Hence, the right hand side of \eqref{3.7} becomes
\begin{align}\label{rhs}
\begin{split}
&-c_k\int_\Omega f'_{2,\epsilon_k}(PW_{2,\delta_k})\phi_{2,k}(PZ_{2,\delta_k}-Z_{2,\delta_k})
+( f'_{2,\epsilon_k}(PW_{2,\delta_k})- f'_{2,0}(PW_{2,\delta_k}))\phi_{2,k}Z_{2,\delta_k}\\&
+(f'_{2,0}(PW_{2,\delta_k})Z_{2,\delta_k}-f'_{2,\epsilon_k}(V_{e_n,\delta_k})\delta_k\partial_\delta V_{e_n,\delta_k}+f'_{2,\epsilon_k}(V_{-e_n,\delta_k})\delta_k\partial_\delta V_{-e_n,\delta_k})\phi_{2,k}\\&
-c_k\int_\Omega f'_{1,\epsilon_k}(PW_{1,\delta_k})\phi_{1,k}(PZ_{1,\delta_k}-Z_{1,\delta_k})
+( f'_{1,\epsilon_k}(PW_{1,\delta_k})- f'_{1,0}(PW_{1,\delta_k}))\phi_{1,k}Z_{1,\delta_k}\\&
+(f'_{1,0}(PW_{1,\delta_k})Z_{1,\delta_k}-f'_{1,\epsilon_k}(U_{e_n,\delta_k})\delta_k\partial_\delta U_{e_n,\delta_k}+f'_{1,\epsilon_k}(U_{-e_n,\delta_k})\delta_k\partial_\delta U_{-e_n,\delta_k})\phi_{1,k}\\
&:=I_1+I_2+I_3+J_1+J_2+J_3.
\end{split}\end{align}

From Proposition \ref{lemexpansion}, Lemma \ref{lemb3}, Lemma \ref{lemb13}, we get that
\begin{align}\label{I1}
\begin{split}
&|I_1|=\Big|-c_k\int_\Omega f'_{2,\epsilon_k}(PW_{2,\delta_k})\phi_{2,k}(PZ_{2,\delta_k}-Z_{2,\delta_k})\Big|\\
&\leq C\|f'_{2,\epsilon_k}(PW_{2,\delta_k})\|_{L^{\frac{p+1}{p-1}}}\|\phi_{2,k}\|_{L^{p+1}}\|PZ_{2,\delta_k}-Z_{2,\delta_k}\|_{L^{p+1}}=O(\delta)=o(1).
\end{split}\end{align}
Similarly, using Lemma \ref{lemb3}, Lemma \ref{lemb13} and the fact \eqref{deltaU},
\begin{align}\label{I2}
\begin{split}
&|I_2|=\Big|\int_\Omega( f'_{2,\epsilon_k}(PW_{2,\delta_k})- f'_{2,0}(PW_{2,\delta_k}))\phi_{2,k}Z_{2,\delta_k}\Big|\\
&\leq C\| f'_{2,\epsilon_k}(PW_{2,\delta_k})- f'_{2,0}(PW_{2,\delta_k})\|_{L^{\frac{p+1}{p-1}}}\|\phi_{2,k}\|_{L^{p+1}}\|Z_{2,\delta_k}\|_{L^{p+1}}=O(\epsilon^{1-\gamma})=o(1).
\end{split}\end{align}

Results for $J_1$ and $J_2$ can be followed by similar estimates.

For $I_3$,
\begin{align}\label{I3'}
\begin{split}
&|I_3|=\Big|\int_\Omega(f'_{2,0}(PW_{2,\delta_k})Z_{2,\delta_k}-f'_{2,\epsilon_k}(V_{e_n,\delta_k})\delta_k\partial_\delta V_{e_n,\delta_k}+f'_{2,\epsilon_k}(V_{-e_n,\delta_k})\delta_k\partial_\delta V_{-e_n,\delta_k})\phi_{2,k}\Big|\\
&\leq  \int_\Omega|f'_{2,0}(PW_{2,\delta_k})-f'_{2,\epsilon_k}(V_{e_n,\delta_k})||\delta_k\partial_\delta V_{e_n,\delta_k}||\phi_{2,k}|\\
&\quad+ \int_\Omega|f'_{2,0}(PW_{2,\delta_k})-f'_{2,\epsilon_k}(V_{-e_n,\delta_k})||\delta_k\partial_\delta V_{-e_n,\delta_k}||\phi_{2,k}|.
\end{split}\end{align}
For $p-1\leq1$, for example, it holds that
\begin{align*}
&|f'_{2,0}(PW_{2,\delta_k})- f'_{2,0}(V_{e_n,\delta_k})|=
||PW_{2,\delta_k}|^{p-1}- |V_{e_n,\delta_k}|^{p-1}|\\
&\leq C|PW_{2,\delta_k}-V_{e_n,\delta_k}|^{p-1}
\leq C|PV_{e_n,\delta_k}-V_{e_n,\delta_k}|^{p-1}+C|PV_{-e_n,\delta_k}|^{p-1}.
\end{align*}
Then we have
\begin{align*}
&\int_\Omega|f'_{2,0}(PW_{2,\delta_k})-f'_{2,\epsilon_k}(V_{e_n,\delta_k})||\delta_k\partial_\delta V_{e_n,\delta_k}||\phi_{2,k}|\\
&\quad+ \int_\Omega|f'_{2,0}(PW_{2,\delta_k})-f'_{2,\epsilon_k}(V_{-e_n,\delta_k})||\delta_k\partial_\delta V_{-e_n,\delta_k}||\phi_{2,k}|\\
&
\leq C \int_\Omega|PV_{e_n,\delta_k}-V_{e_n,\delta_k}|^{p-1}|\delta_k\partial_\delta V_{e_n,\delta_k}||\phi_{2,k}|+C \int_\Omega|PV_{-e_n,\delta_k}|^{p-1}|\delta_k\partial_\delta V_{e_n,\delta_k}||\phi_{2,k}|\\
&\leq C\|PV_{e_n,\delta_k}-V_{e_n,\delta_k}\|_{L^{p+1}}^{p-1}\|\phi_{2,k}\|_{L^{p+1}}\|V_{e_n,\delta_k}\|_{L^{p+1}}
+C \int_\Omega|PV_{-e_n,\delta_k}|^{p-1}|\delta_k\partial_\delta V_{e_n,\delta_k}||\phi_{2,k}|\\
&\leq o(1)+C\int_\Omega\frac{\delta_k^{-\frac n{p+1}}}{(1+\frac{|x-e_n|}{\delta_k})^{n-2}}\frac{\delta_k^{-\frac {n(p-1)}{p+1}}}{(1+\frac{|x+e_n|}{\delta_k})^{(p-1)(n-2)}}
|\phi_{2,k}(x)|\\
&\leq o(1)+C\frac{\delta_k^{n-2}}{|e_n+e_n|^{n-2}}\int_\Omega\delta_k^{-\frac{np}{p+1}}\Big(\frac{1}{(1+\frac{|x-e_n|}{\delta_k})^{(p-1)(n-2)}}+\frac{1}{(1+\frac{|x+e_n|}{\delta_k})^{(p-1)(n-2)}}\Big)
|\phi_{2,k}(x)|\\
&\leq o(1)+C\frac{\delta_k^{n-2}}{|e_n+e_n|^{n-2}}\left(\int_\Omega\delta_k^{-n}\Big(\frac{1}{(1+\frac{|x-e_n|}{\delta_k})^{(p-1)(n-2)}}\Big)^{\frac{p+1}p}\right)^{\frac p{p+1}}
\|\phi_{2,k}\|_{L^{p+1}}\\
&\leq o(1)+\frac{C\delta_k^{n-2}}{|e_n+e_n|^{n-2}}\Big(\int_{\frac{\Omega-e_n}{\delta_k}}\frac1{(1+|y|)^{(p-1)(n-2)\frac{p+1}p}}\Big)^{\frac p{p+1}}\|\phi_{2,k}\|_{L^{p+1}}\\
&=o(1)+\begin{cases}\dis O(\delta_k^{n-2-\epsilon_0}) &if\ (p-1)(n-2)\frac{p+1}p\geq n,\\
\dis O(\delta_k^{\frac{pn}{q+1}}) &if\ (p-1)(n-2)\frac{p+1}p<n
\end{cases}
=o(1),
\end{align*}
where $\epsilon_0>0$ is any small constant.

So \eqref{I3'} gives $I_3=o(1)$.
\smallskip

For $J_3$,  since $q-1\leq1$ can be handled similar as $I_3$, we are sufficed to deal with the case when $q-1>1$ as follows
\begin{align*}
&|f'_{1,0}(PW_{1,\delta_k})- f'_{1,0}(U_{e_n,\delta_k})|=
||PW_{1,\delta_k}|^{q-1}- |U_{e_n,\delta_k}|^{q-1}|\\
&\leq C(|PW_{1,\delta_k}-U_{e_n,\delta_k}|^{q-1}+|U_{e_n,\delta_k}|^{q-2}|PW_{1,\delta_k}-U_{e_n,\delta_k}|)\\
&\leq C\Big(|PU_{e_n,\delta_k}-U_{e_n,\delta_k}|^{q-1}+|PU_{-e_n,\delta_k}|^{q-1}+|U_{e_n,\delta_k}|^{q-2}(|PU_{e_n,\delta_k}-U_{e_n,\delta_k}|
+|PU_{-e_n,\delta_k}|)\Big).
\end{align*}
then
\begin{align*}
&\int_\Omega|f'_{1,0}(PW_{1,\delta_k})-f'_{1,\epsilon_k}(U_{e_n,\delta_k})||\delta_k\partial_\delta U_{e_n,\delta_k}||\phi_{1,k}|\\
&\quad+ \int_\Omega|f'_{1,0}(PW_{1,\delta_k})-f'_{1,\epsilon_k}(U_{-e_n,\delta_k})||\delta_k\partial_\delta U_{-e_n,\delta_k}||\phi_{1,k}|\\
&
\leq C \int_\Omega|PU_{e_n,\delta_k}-U_{e_n,\delta_k}|^{q-1}|\delta_k\partial_\delta U_{e_n,\delta_k}||\phi_{1,k}|+C \int_\Omega|PU_{-e_n,\delta_k}|^{q-1}|\delta_k\partial_\delta U_{e_n,\delta_k}||\phi_{1,k}|\\
&\quad+ \int_\Omega |U_{e_n,\delta_k}|^{q-2}|PU_{e_n,\delta_k}-U_{e_n,\delta_k}||\delta_k\partial_\delta U_{e_n,\delta_k}||\phi_{1,k}|
+ \int_\Omega |U_{e_n,\delta_k}|^{q-2}
|PU_{-e_n,\delta_k}||\delta_k\partial_\delta U_{e_n,\delta_k}||\phi_{1,k}|.
\end{align*}
At this time, similar as \eqref{I3'}, we can use Lemma \ref{lemb3} to estimate $J_3=o(1)$ in two cases: $p>\frac n{n-2}$ and $p<\frac n{n-2}$ respectively.
\smallskip

The left hand side of \eqref{3.7} gives that
\begin{align*}
&c_k^2\int\nabla\delta \partial_\delta PW_{1,\delta_k}\cdot\nabla\delta \partial_\delta PW_{2,\delta_k}+\nabla\delta \partial_\delta PW_{2,\delta_k}\cdot\nabla\delta \partial_\delta PW_{1,\delta_k}\\&=
c_k^2\delta^2\int_\Omega (-\Delta \partial_\delta PW_{1,\delta_k} \partial_\delta PW_{2,\delta_k}-\Delta \partial_\delta PW_{2,\delta_k} \partial_\delta PW_{1,\delta_k})\\&=
2c_k^2\delta^2\int_\Omega \partial_\delta(V_{e_n,\delta_k}^p-V_{-e_n,\delta_k}^p) \partial_\delta PW_{2,\delta_k}\\
&=2pc_k^2\int_\Omega\Big(V_{e_n,\delta_k}^{p-1}(\delta\partial_\delta V_{e_n,\delta_k})^2+V_{-e_n,\delta_k}^{p-1}(\delta\partial_\delta V_{-e_n,\delta_k})^2\Big)+\xi(\delta_k)
\end{align*}
with $\xi(\delta_k)=o(1)$.
Moreover, similar as \eqref{tildeC}, by Lemma \ref{lemasym} and Lemma \ref{lemasym'}, there exists some constant $c'>0$ such that
\begin{align}\label{left}
\begin{split}
&\int_\Omega\Big(V_{e_n,\delta_k}^{p-1}(\delta_k\partial_\delta V_{e_n,\delta_k})^2+V_{-e_n,\delta_k}^{p-1}(\delta_k\partial_\delta V_{-e_n,\delta_k})^2\Big)
\geq c'>0.\end{split}
\end{align}
Hence the left hand side of \eqref{3.7} $\geq c>0$, which combined
 with the right hand side estimates \eqref{I1}-\eqref{I3'} and the estimates on $J_1-J_3$, we have that
$c_k\rightarrow0$. Recalling that $(z_{1,k},z_{2,k})=c_k(PZ_{1,\delta_k},PZ_{2,\delta_k})\in K_{d_k,\epsilon_k}$, then we arrive at
$\|(z_{1,k},z_{2,k})\|_\epsilon\rightarrow0$.

\medskip

{\bf Step 2. }  We show \begin{align}\label{>0}
\liminf_{k\rightarrow+\infty}(\|f'_{2,\epsilon_k}(PW_{2,\delta_k})u_{2,k}\|_{L^{\frac{p+1}p}}+\|f'_{1,\epsilon_k}(PW_{1,\delta_k})u_{1,k}\|_{L^{\frac{q+1}q}})>0.
\end{align}

By \eqref{eqzk}, we set for $i=1,2$ that
$$u_{i,k}:=\phi_{i,k}-h_{i,k}-z_{i,k}$$ and then
\begin{align}\label{equk}
\begin{split}
&(u_{1,k},u_{2,k}) =K(f'_{2,\epsilon}(PW_{2,\delta})\phi_{2,k},f'_{1,\epsilon}(PW_{1,\delta})\phi_{1,k})\\& \quad
=K\Big(f'_{2,\epsilon_k}(PW_{2,\delta_k})u_{2,k}+f'_{2,\epsilon_k}(PW_{2,\delta_k})(h_{2,k}+z_{2,k}),\\
&\qquad \qquad  f'_{1,\epsilon_k}(PW_{1,\delta_k})u_{1,k}+f'_{1,\epsilon_k}(PW_{1,\delta_k})(h_{1,k}+z_{1,k})\Big)
\end{split}\end{align}
or
\begin{align}\label{equk0}
\begin{cases}
-\Delta u_{1,k}=f'_{2,\epsilon_k}(PW_{2,\delta_k})u_{2,k}+f'_{2,\epsilon_k}(PW_{2,\delta_k})(h_{2,k}+z_{2,k})\\
-\Delta u_{2,k} =f'_{1,\epsilon_k}(PW_{1,\delta_k})u_{1,k}+f'_{1,\epsilon_k}(PW_{1,\delta_k})(h_{1,k}+z_{1,k})\\
\int_\Omega u_{1,k}=\int_\Omega u_{2,k}=0,\ \ \ \partial_\nu u_{1,k}=\partial_\nu u_{2,k}=0\ on\ \partial\Omega.
\end{cases}\end{align}
Now we claim that
\begin{align}\label{3.13}
\liminf_{k\rightarrow+\infty}\|(u_{1,k},u_{2,k})\|>0.
\end{align}

In fact, in view of the definition of $K$, set $q_k=q_{\epsilon_k},p_k=p_{\epsilon_k}$,
\begin{align}\label{1}
\begin{split}
 \|u_{1,k}\|_{L^{q_k+1}}&\leq C\|K(f'_{2,\epsilon_k}(PW_{2,\delta_k})u_{2,k}+f'_{2,\epsilon_k}(PW_{2,\delta_k})(h_{2,k}+z_{2,k}))\|_{L^{q_k+1}}\\
&\leq C\|K(f'_{2,\epsilon_k}(PW_{2,\delta_k})u_{2,k}+f'_{2,\epsilon_k}(PW_{2,\delta})(h_{2,k}+z_{2,k}))\|_{\dot W^{2,\tilde p_k}}\\
&\leq C\|f'_{2,\epsilon_k}(PW_{2,\delta_k})u_{2,k}+f'_{2,\epsilon_k}(PW_{2,\delta_k})(h_{2,k}+z_{2,k})\|_{L^{\tilde p_k}}\\
&\leq C\|f'_{2,\epsilon_k}(PW_{2,\delta_k})\|_{L^{\bar p_k}}\|u_{2,k}\|_{L^{p+1}}+\|f'_{2,\epsilon_k}(PW_{2,\delta_k})\|_{L^{\hat p_k}}\|h_{2,k}+z_{2,k}\|_{L^{p_k+1}}
\end{split}
\end{align}
where $$\tilde p_k=\frac1{\frac2n+\frac1{q_k+1}},\bar p_k=\frac1{\frac2n+\frac1{q_k+1}-\frac1{p+1}}, \hat p_k=\frac1{\frac2n+\frac1{q_k+1}-\frac1{p_k+1}}$$ and we have used the fact that
$\bar p_k=\frac{p+1}{p-1}+O(\epsilon_k)$ and $\|f'_{2,\epsilon_k}(PW_{2,\delta_k})\|_{L^{\bar p_k}}=O(1)$ as $k\rightarrow+\infty$.

Similarly, we also have
\begin{align}\label{2}
\begin{split}
 \|u_{2,k}\|_{L^{p_k+1}}&\leq C\|u_{2,k}\|_{L^{q+1}}+o(1).
\end{split}
\end{align}

Combining   \eqref{1} and \eqref{2}, we get that if $\|(u_{1,k},u_{2,k}) \|\rightarrow0$ then also
$\|(u_{1,k},u_{2,k}) \|_{L^{q_k+1}\times L^{p_k+1}}\rightarrow0$, which means $\|(u_{1,k},u_{2,k}) \|_{\epsilon}\rightarrow0$,
a contradiction with the assumption $\|(\phi_{1,k},\phi_{2,k})\|_{\epsilon_k}=1$.

Hence, $\liminf_{k\rightarrow+\infty}\|(u_{1,k},u_{2,k}) \|>0$, and,  by the equations \eqref{equk0}, we  have proved  \eqref{>0}.

\medskip

{\bf Step 3.}  In order to carry out a blowing up procedure, we are inspired by
\cite{arxiv} to choose the extension operator $E: W^{1,p^*}(\Omega)\times W^{1,q^*}(\Omega)\rightarrow W^{1,p^*}(\R^n)\times W^{1,q^*}(\R^n)$ such that $E(u_{1,k},u_{2,k})=(u_{1,k},u_{2,k})$ in $\Omega$ and can obtain
\begin{align}\label{ext}
\begin{split}
&\|E(u_{1,k},u_{2,k})\|_{\dot W^{1,p^*}\times\dot W^{1,q^*}(\R^n)}\leq \|E(u_{1,k},u_{2,k})\|_{ W^{1,p^*}\times W^{1,q^*}(\R^n)}\\&\leq C\| (u_{1,k},u_{2,k})\|_{ W^{1,p^*}\times W^{1,q^*}(\Omega)}\leq C\|(u_{1,k},u_{2,k})\|_{\dot W^{1,p^*}\times\dot W^{1,q^*}(\Omega)},
\end{split}\end{align}
where, in the last inequalities,  we use Poincar\'e-Wirtinger inequality since $u_{i,k}(i=1,2)$ has zero average in $\Omega$,
and $p^*$ and $q^*$ are defined in \eqref{emb}.
We identify $u_{i,k}(i=1,2)$ with its extension.

Now we use the blowing up to set
 $$\hat u_{1,k}(y)=\delta_k^{\frac n{q+1}}u_{1,k}(\delta_ky+e_n),\ \ \hat u_{2,k}(y)=\delta_k^{\frac n{p+1}}u_{2,k}(\delta_ky+e_n)$$
and $\Omega_k=\frac{\Omega-e_n}{\delta_k}=B_{\frac1{\delta_k}}(-\frac{e_n}{\delta_k}).$
Then \eqref{ext} turns out
\begin{align}\label{ext'}
\begin{split}
&\|(\hat u_{1,k},\hat u_{2,k})\|_{\dot W^{1,p^*}\times\dot W^{1,q^*}(\R^n)}= \|(u_{1,k},u_{2,k})\|_{\dot W^{1,p^*}\times\dot W^{1,q^*}(\R^n)}\\&\leq \|(u_{1,k},u_{2,k})\|_{\dot W^{1,p^*}\times\dot W^{1,q^*}(\Omega)}\leq C\|(\hat u_{1,k},\hat u_{2,k})\|_{\dot W^{1,p^*}\times\dot W^{1,q^*}(\Omega_k)},
\end{split}\end{align}
which means that $(\hat u_{1,k},\hat u_{2,k})$ is bounded in $\dot W^{1,p^*}(\R^n)\times \dot W^{1,q^*}(\R^n)$.
Up to some subsequence, there is some $(\hat u_1,\hat u_2)\in \dot W^{1,p^*}(\R^n)\times \dot W^{1,q^*}(\R^n)$ such that
$$\hat u_{1,k}\rightharpoonup\hat u_1\ \ weakly\ in\ \dot W^{1,p^*}(\R^n),\ \ \ \ \hat u_{2,k}\rightharpoonup\hat u_2\ \ weakly\ in\ \dot W^{1,q^*}(\R^n)$$
and we show that $(\hat u_1,\hat u_2)=0$.

From the equations of $(u_{1,k},u_{2,k})$ we know that
\begin{align}\label{equkhat}
\begin{cases}
-\Delta \hat u_{1,k}=\delta_k^{\frac{n(p-1)}{p+1}}f'_{2,\epsilon_k}(PW_{2,\delta_k}(\delta_ky+e_n))\hat u_{2,k}+\delta_k^{\frac{n(p-1)}{p+1}}f'_{2,\epsilon_k}(PW_{2,\delta_k}(\delta_ky+e_n))(\hat h_{2,k}+\hat z_{2,k})\\
-\Delta \hat u_{2,k} =\delta_k^{\frac{n(q-1)}{q+1}}f'_{1,\epsilon_k}(PW_{1,\delta_k}(\delta_ky+e_n))\hat u_{1,k}+\delta_k^{\frac{n(q-1)}{q+1}}f'_{1,\epsilon_k}(PW_{1,\delta_k}(\delta_ky+e_n))(\hat h_{1,k}+\hat z_{1,k})\\
\int_{\Omega_k} \hat u_{1,k}=\int_{\Omega_k} \hat u_{2,k}=0,\ \ \ \partial_\nu \hat u_{1,k}=\partial_\nu \hat u_{2,k}=0\ on\ \partial\Omega_k,
\end{cases}\end{align}
where $\hat h_{1,k}(y)=\delta_k^{\frac n{q+1}}h_{1,k}(\delta_ky+e_n),\hat h_{2,k}(y)=\delta_k^{\frac n{p+1}}h_{2,k}(\delta_ky+e_n),\hat z_{1,k}(y)=\delta_k^{\frac n{q+1}}z_{1,k}(\delta_ky+e_n),\hat z_{2,k}(y)=\delta_k^{\frac n{p+1}}z_{2,k}(\delta_ky+e_n)$.

Take $\psi_2,\psi_1\in C_c^\infty(\overline{\R_+^n})$ as the test functions of two equations in \eqref{equkhat} respectively to obtain that
\begin{align*}
&\int_{\R_+^n}\nabla \hat u_1\cdot\nabla\psi_2+\nabla \hat u_2\cdot\nabla\psi_1
=\lim_{k\rightarrow+\infty}\int_{\Omega_k}\nabla \hat u_{1,k}\cdot\nabla\psi_2+\nabla \hat u_{2,k}\cdot\nabla\psi_1\\
&=\lim_{k\rightarrow+\infty}\int_{\Omega_k}\delta_k^{\frac{n(p-1)}{p+1}}f'_{2,\epsilon_k}(PW_{2,\delta_k}(\delta_ky+e_n))\hat u_{2,k}(y)\psi_2(y)dy\\&+
\lim_{k\rightarrow+\infty}\int_{\Omega_k}\delta_k^{\frac{n(q-1)}{q+1}}f'_{1,\epsilon_k}(PW_{1,\delta_k}(\delta_ky+e_n))\hat u_{1,k}(y)\psi_1(y)dy\\
&+\lim_{k\rightarrow+\infty}\int_{\Omega_k}\delta_k^{\frac{n(p-1)}{p+1}}f'_{2,\epsilon_k}(PW_{2,\delta_k}(\delta_ky+e_n))(\hat h_{2,k}+\hat z_{2,k})\psi_2(y)dy\\
&+\lim_{k\rightarrow+\infty}\int_{\Omega_k}\delta_k^{\frac{n(q-1)}{q+1}}f'_{1,\epsilon_k}(PW_{1,\delta_k}(\delta_ky+e_n))(\hat h_{1,k}+\hat z_{1,k})\psi_1(y)dy\\
&=\int_{\R_+^n}f'_{2,0}(V)\hat u_2\psi_2dy+\int_{\R_+^n}f'_{1,0}(U)\hat u_1\psi_1dy,
\end{align*}
which implies that $(\hat u_1,\hat u_2)$ would be a solution of
\begin{align}\label{eqhat}
\begin{cases}
-\Delta \hat u_{1}=f'_{2,0}(V)\hat u_{2},\ &in\ \R^n_+\\
-\Delta \hat u_{2} =f'_{1,0}(U)\hat u_{1},\ &in\ \R^n_+\\
 \partial_\nu \hat u_{1}=\partial_\nu \hat u_{2}=0\ on\ \partial\R^n_+.
\end{cases}\end{align}
If we identify $\hat u_1,\hat u_2$ with their even reflections with respect to $\partial\R^n_+$, then
\begin{align}\label{eqhat0}
\begin{cases}
-\Delta \hat u_{1}=f'_{2,0}(V)\hat u_{2},\ &in\ \R^n\\
-\Delta \hat u_{2} =f'_{1,0}(U)\hat u_{1},\ &in\ \R^n\\
 (\hat u_{1},\hat u_{2})\in \dot W^{2,\frac{p+1}p}(\R^n)\times\dot W^{2,\frac{q+1}q}(\R^n).
\end{cases}\end{align}
Since $(u_{1,k},u_{2,k})\in H_{\epsilon_k}$, we see that $u_{1,k},u_{2,k}$ are both even with respect to $x_1,\ldots,x_{n-1}$. So
$\hat u_{1,k},\hat u_{2,k}$ are   even with respect to $x_1,\ldots,x_{n-1}$ too, which means $\hat u_1,\hat u_2$ are also even  with respect to $x_1,\ldots,x_{n-1}$.
Therefore, we obtain that $(\hat u_1,\hat u_2)=c_0(\partial_\delta U_{0,\delta}|_{\delta=1},\partial_\delta V_{0,\delta}|_{\delta=1})$.

We claim that $c_0=0$.

In fact, since $u_{1,k},u_{2,k}$ are odd with respect to $x_n$ and $\Omega=B_1(0)$, it holds that
\begin{align*}
\int_\Omega u_{1,k}f'_{1,0}(U_{e_n,\delta_k})\delta_k\partial_\delta U_{e_n,\delta_k}=-\int_\Omega u_{1,k}f'_{1,0}(U_{-e_n,\delta_k})\delta_k\partial_\delta U_{-e_n,\delta_k},\\
\int_\Omega u_{2,k}f'_{2,0}(V_{e_n,\delta_k})\delta_k\partial_\delta V_{e_n,\delta_k}=-\int_\Omega u_{2,k}f'_{2,0}(V_{-e_n,\delta_k})\delta_k\partial_\delta V_{-e_n,\delta_k}.
\end{align*}
Thus, setting $(\Psi,\Phi)=(\partial_\delta U_{0,\delta}|_{\delta=1},\partial_\delta V_{0,\delta}|_{\delta=1})$ and by the symmetry of $\Omega$, we get
\begin{align*}
c_0\int_{\R^n}\nabla\Psi\nabla\Phi&=2\int_{\R^n_+}\nabla\Psi\nabla \hat u_{2}=2\int_{\Omega_k}\nabla\Psi\nabla \hat u_{2,k}+o(1)\\
&=2\int_{\Omega_k} \hat u_{2,k}f'_{2,0}(V_{0,1})\Psi+o(1)
=2\int_{\Omega}u_{2,k}f'_{2,0}(V_{e_n,\delta_k})\delta_k\partial_\delta V_{e_n,\delta_k}+o(1)\\
&=\int_{\Omega}u_{2,k}\Big(f'_{2,0}(V_{e_n,\delta_k})\delta_k\partial_\delta V_{e_n,\delta_k}-f'_{2,0}(V_{-e_n,\delta_k})\delta_k\partial_\delta V_{-e_n,\delta_k}\Big)+o(1).
\end{align*}
Similarly, we also have
\begin{align*}
c_0\int_{\R^n}\nabla\Psi\nabla\Phi&=2\int_{\R^n_+}\nabla\Psi\nabla \hat u_{2}=\int_{\Omega}u_{1,k}\Big(f'_{1,0}(U_{e_n,\delta_k})\delta_k\partial_\delta U_{e_n,\delta_k}-f'_{1,0}(U_{-e_n,\delta_k})\delta_k\partial_\delta U_{-e_n,\delta_k}\Big)+o(1).
\end{align*}
Hence, recalling that $u_{i,k}=\phi_{i,k}-h_{i,k}-z_{i,k}$ for $i=1,2$, we have
\begin{align*}
2c_0\int_{\R^n}\nabla\Psi\nabla\Phi=2\int_{\R^n_+}\nabla\Psi\nabla \hat u_{2}&=\int_\Omega\nabla u_{1,k}\cdot\nabla PZ_{2,\delta_k}+\nabla u_{2,k}\cdot\nabla PZ_{1,\delta_k}+o(1)\\&
=\int_\Omega\nabla z_{1,k}\cdot\nabla PZ_{2,\delta_k}+\nabla z_{2,k}\cdot\nabla PZ_{1,\delta_k}+o(1)=o(1),
\end{align*}
which, by a similar argument as in \eqref{left}, gives $c_0=0$ and so $(\hat u_1,\hat u_2)=0$.

\medskip

{\bf Step 4.}  Now we are in position to show a contradiction with \eqref{>0}.

As we did in \eqref{omegasplit}, we split $\Omega=\Omega_+\cup\Omega_-\cup\Omega_c$ with $\Omega_c:=\Omega\setminus(\Omega_+\cup\Omega_-)$.
First on $\Omega_c$,
\begin{align}\label{f1}
\begin{split}
&\|f'_{2,\epsilon_k}(PW_{2,\delta_k})u_{2,k}\|_{L^{\frac{p+1}p}(\Omega_c)}
\leq C\Big(\int_{\Omega_c}V_{e_n,\delta_k}^{\frac{(p_k-1)(p+1)}{p-1}}+V_{-e_n,\delta_k}^{\frac{(p_k-1)(p+1)}{p-1}}\Big)^{\frac{p-1}{p+1}}
\Big(\int_{\Omega_c}u_{2,k}^{p+1}\Big)^{\frac{1}{p+1}}\\
&\leq C\Big(\int_{\Omega_c}\frac{\delta_k^{((n-2)(p+1)-n)\frac{p_k-1}{p-1}}}{|x-e_n|^{\frac{(n-2)(p+1)(p_k-1)}{p-1}}}
+\frac{\delta_k^{((n-2)(p+1)-n)\frac{p_k-1}{p-1}}}{|x+e_n|^{\frac{(n-2)(p+1)(p_k-1)}{p-1}}}\Big)^{\frac{p-1}{p+1}}=o(1).
\end{split}\end{align}
Similarly,
\begin{align}\label{f2}
\begin{split}
&\|f'_{1,\epsilon_k}(PW_{1,\delta_k})u_{1,k}\|_{L^{\frac{q+1}q}(\Omega_c)}
\leq C\Big(\int_{\Omega_c}U_{e_n,\delta_k}^{\frac{(q_k-1)(q+1)}{q-1}}+U_{-e_n,\delta_k}^{\frac{(q_k-1)(q+1)}{q-1}}\Big)^{\frac{q-1}{q+1}}
\Big(\int_{\Omega_c}u_{1,k}^{q+1}\Big)^{\frac{1}{q+1}}\\
&\leq C\begin{cases}\dis\Big(\int_{\Omega_c}\frac{\delta_k^{((n-2)(q+1)-n)\frac{q_k-1}{q-1}}}{|x-e_n|^{\frac{(n-2)(q+1)(q_k-1)}{q-1}}}
+\frac{\delta_k^{((n-2)(q+1)-n)\frac{q_k-1}{q-1}}}{|x+e_n|^{\frac{(n-2)(q+1)(q_k-1)}{q-1}}}\Big)^{\frac{q-1}{q+1}}  &if\ p>\frac n{n-2},\\
\dis \Big(\int_{\Omega_c}\frac{\delta_k^{(((n-2)p-2)(q+1)-n)\frac{q_k-1}{q-1}}}{|x-e_n|^{\frac{((n-2)p-2)(q+1)(q_k-1)}{q-1}}}
+\frac{\delta_k^{(((n-2)p-2)(q+1)-n)\frac{q_k-1}{q-1}}}{|x+e_n|^{\frac{((n-2)p-2)(q+1)(q_k-1)}{q-1}}}\Big)^{\frac{q-1}{q+1}}  &if\ p<\frac n{n-2}
\end{cases}\\
&\leq C\begin{cases}\dis O\Big(\delta_k^{\frac{q_k-1}{p+1}n}\Big)  &if\ p>\frac n{n-2},\\
\dis O\Big(\delta_k^{\frac{q_k-1}{q+1}pn}\Big)  &if\ p<\frac n{n-2}
\end{cases}=o(1),
\end{split}\end{align}
where we use the fact that $(q+1)(n-2)>n$ and $((n-2)p-2)(q+1)=(p+1)n>n$.

Now on $\Omega_+$, by blowing up, we estimate
\begin{align}\label{f3}
\begin{split}
&\|f'_{2,\epsilon_k}(PW_{2,\delta_k})u_{2,k}\|_{L^{\frac{p+1}p}(\Omega_+)}
=\|V_{e_n,\delta_k}^{p_k-1}u_{2,k}\|_{L^{\frac{p+1}p}(\Omega_+)}+o(1)\\
&\leq C\Big(\int_{\Omega_+}\frac{\delta_k^{-\frac{n(p_k-1)}{p}}}{(1+\frac{|x-e_n|}{\delta_k})^{\frac{(n-2)(p+1)(p_k-1)}{p}}}u_{2,k}^{\frac{p+1}p}\Big)^{\frac{p}{p+1}}+o(1)\\
&\leq C\delta_k^{n(1-\frac{p_k}{p})\frac{p}{p+1}}\Big(\int_{\Omega_k}\frac{1}{(1+|y|)^{\frac{(n-2)(p+1)(p_k-1)}{p}}}\hat u_{2,k}^{\frac{p+1}p}\Big)^{\frac{p}{p+1}}+o(1)\\
&\leq C\Big(\int_{\R^n}\frac{1}{(1+|y|)^{\frac{(n-2)(p+1)(p-1)}{p}}}\hat u_{2,k}^{\frac{p+1}p}\Big)^{\frac{p}{p+1}}+o(1)\\
&=o(1)
\end{split}\end{align}
where in the last step, we use the previous result $\hat u_{2,k}\rightharpoonup0$ weakly in $L^{p+1}(\R^n)$, which means
$\hat u_{2,k}^{\frac{p+1}p}\rightharpoonup0$ weakly in $L^{p}(\R^n)$, and $\frac{1}{(1+|y|)^{\frac{(n-2)(p+1)(p-1)}{p}}}\in L^{\frac{p}{p-1}}(\R^n)$.

Next, for $p>\frac n{n-2}$,
\begin{align}\label{f4}
\begin{split}
&\|f'_{1,\epsilon_k}(PW_{1,\delta_k})u_{1,k}\|_{L^{\frac{q+1}q}(\Omega_+)}
=\|U_{e_n,\delta_k}^{q_\epsilon-1}u_{1,k}\|_{L^{\frac{q+1}q}(\Omega_+)}+o(1)\\
&\leq C\Big(\int_{\Omega_+}\frac{\delta_k^{-\frac{n(q_k-1)}{q}}}{(1+\frac{|x-e_n|}{\delta_k})^{\frac{(n-2)(q+1)(q_k-1)}{q}}}u_{1,k}^{\frac{q+1}q}\Big)^{\frac{q}{q+1}}+o(1)\\
&\leq C\delta_k^{n(1-\frac{q_k}{q})\frac{q}{q+1}}\Big(\int_{\Omega_k}\frac{1}{(1+|y|)^{\frac{(n-2)(q+1)(q_k-1)}{q}}}\hat u_{1,k}^{\frac{q+1}q}\Big)^{\frac{q}{q+1}}+o(1)\\
&\leq C\Big(\int_{\R^n}\frac{1}{(1+|y|)^{\frac{(n-2)(q+1)(q-1)}{q}}}\hat u_{1,k}^{\frac{q+1}q}\Big)^{\frac{q}{q+1}}+o(1)
=o(1);
\end{split}\end{align}
While for $p<\frac n{n-2}$,
\begin{align}\label{f4'}
\begin{split}
&\|f'_{1,\epsilon_k}(PW_{1,\delta_k})u_{1,k}\|_{L^{\frac{q+1}q}(\Omega_+)}
=\|U_{e_n,\delta_k}^{q_\epsilon-1}u_{1,k}\|_{L^{\frac{q+1}q}(\Omega_+)}+o(1)\\
&\leq C\Big(\int_{\Omega_+}\frac{\delta_k^{-\frac{n(q_k-1)}{q}}}{(1+\frac{|x-e_n|}{\delta_k})^{\frac{((n-2)p-2)(q+1)(q_k-1)}{q}}}u_{1,k}^{\frac{q+1}q}\Big)^{\frac{q}{q+1}}+o(1)\\
&\leq C\delta_k^{n(1-\frac{q_k}{q})\frac{q}{q+1}}\Big(\int_{\Omega_k}\frac{1}{(1+|y|)^{\frac{((n-2)p-2)(q+1)(q_k-1)}{q}}}\hat u_{1,k}^{\frac{q+1}q}\Big)^{\frac{q}{q+1}}+o(1)\\
&\leq C\Big(\int_{\R^n}\frac{1}{(1+|y|)^{\frac{((n-2)p-2)(q+1)(q-1)}{q}}}\hat u_{1,k}^{\frac{q+1}q}\Big)^{\frac{q}{q+1}}+o(1)
=o(1),
\end{split}\end{align}
where we used the fact that $\hat u_{1,k}\rightharpoonup0$ weakly in $L^{q+1}(\R^n)$, which means
$\hat u_{1,k}^{\frac{q+1}q}\rightharpoonup0$ weakly in $L^{q}(\R^n)$, and $\dis\frac{1}{(1+|y|)^{\frac{(n-2)(q+1)(q-1)}{q}}}
,\frac{1}{(1+|y|)^{\frac{((n-2)p-2)(q+1)(q-1)}{q}}}\in L^{\frac{q}{q-1}}(\R^n)$.

Also on $\Omega_-$,
\begin{align}\label{f5}
&\|f'_{1,\epsilon_k}(PW_{1,\delta_k})u_{1,k}\|_{L^{\frac{q+1}q}(\Omega_-)}+\|f'_{2,\epsilon_k}(PW_{2,\delta_k})u_{2,k}\|_{L^{\frac{p+1}p}(\Omega_-)}
=o(1).
\end{align}

From \eqref{f1}-\eqref{f5}, we finally obtain
\begin{align*}
\|f'_{2,\epsilon_k}(PW_{2,\delta_k})u_{2,k}\|_{L^{\frac{p+1}p}}+\|f'_{1,\epsilon_k}(PW_{1,\delta_k})u_{1,k}\|_{L^{\frac{q+1}q}}=o(1),
\end{align*}
which is a contradiction with \eqref{>0}, concluding the proof.
\end{proof}

\medskip

\subsection{Estimate for $R_{d,\epsilon}$ and $N_{d,\epsilon}$}
In order to prove Proposition \ref{prop3.1} through applying the contraction mapping principle,
we  estimate the zero order term $R_{d,\epsilon}$ and the higher order terms $N_{d,\epsilon}(\phi_1,\phi_2)$.

\begin{Lem}\label{lemR}
Let $\eta\in(0,1)$, $\delta=d\epsilon$. Then $\|R_{d,\epsilon}\|_\epsilon=O(\epsilon^{\frac12+\gamma'})$
as $\epsilon\rightarrow0$ uniformly in $d\in(\eta,\frac1\eta)$ for some $\gamma'\in(0,\frac12)$.

\end{Lem}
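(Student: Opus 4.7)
The plan is to estimate $R_{d,\epsilon}$ by controlling the residual of the Ansatz $(PW_{1,\delta},PW_{2,\delta})$ as an approximate solution. Since $-\Delta PW_{1,\delta}=V_{e_n,\delta}^p-V_{-e_n,\delta}^p$ and $-\Delta PW_{2,\delta}=U_{e_n,\delta}^q-U_{-e_n,\delta}^q$, set
\begin{align*}
\mathcal{E}_1&:=f_{2,\epsilon}(PW_{2,\delta})-V_{e_n,\delta}^p+V_{-e_n,\delta}^p,\\
\mathcal{E}_2&:=f_{1,\epsilon}(PW_{1,\delta})-U_{e_n,\delta}^q+U_{-e_n,\delta}^q.
\end{align*}
Using that $K$ is continuous from $L^{\frac{p+1}p}$ to $W^{2,\frac{p+1}p}$ and from $L^{\frac{q+1}q}$ to $W^{2,\frac{q+1}q}$, that $\Pi_\epsilon^\perp$ is bounded on $H_\epsilon$, and that the Sobolev embeddings \eqref{emb} combined with $|p_\epsilon-p|+|q_\epsilon-q|=O(\epsilon)$ transfer $W^{2,s}$ control into the $L^{q_\epsilon+1}\times L^{p_\epsilon+1}$ part of $\|\cdot\|_\epsilon$, I will reduce the statement to
$$\|R_{d,\epsilon}\|_\epsilon\;\le\;C\bigl(\|\mathcal{E}_1\|_{L^{\frac{p+1}p}(\Omega)}+\|\mathcal{E}_2\|_{L^{\frac{q+1}q}(\Omega)}\bigr).$$

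Next I decompose, for $i=1,2$,
$$\mathcal{E}_i=\underbrace{\bigl[f_{i,\epsilon}-f_{i,0}\bigr](PW_{i,\delta})}_{(A)_i}\;+\;\underbrace{f_{i,0}(PW_{i,\delta})-f_{i,0}(W_{i,\delta})}_{(B)_i}\;+\;\underbrace{f_{i,0}(W_{i,\delta})-(\text{sum of singletons})}_{(C)_i}.$$
For $(A)_i$, the pointwise inequality $\bigl||t|^{p_\epsilon-1}t-|t|^{p-1}t\bigr|\le C\epsilon|t|^{p}(1+|\log|t||)$, combined with $\|PW_{i,\delta}\|_{L^{p+1}}+\|PW_{i,\delta}\|_{L^{q+1}}=O(1)$ from Proposition \ref{lemexpansion} and Lemma \ref{lemb3}, gives $\|(A)_i\|=O(\epsilon^{1-\sigma})$ for every small $\sigma>0$. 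For $(B)_i$, the mean value bound $|f_{i,0}(PW)-f_{i,0}(W)|\lesssim|W|^{p-1}|PW-W|+|PW-W|^p$ (with the obvious analogue when the exponent is below $2$), together with the pointwise decay of $|PW_{i,\delta}-W_{i,\delta}|$ recorded in \eqref{zeta1}, yields a bound of order at worst $\delta^{\min(1,p)}$, which is $o(\epsilon^{1/2+\gamma'})$ in the regime of $n\ge 4$.

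The main term is $(C)_i$, which encodes the interaction between the two bubbles centered at $\pm e_n$. Splitting $\Omega=\Omega_+\cup\Omega_-\cup\Omega_c$ as in \eqref{omegasplit}, on $\Omega_+$ the bubble at $e_n$ is dominant and a Taylor expansion gives
$$\bigl|(V_{e_n,\delta}-V_{-e_n,\delta})^p-V_{e_n,\delta}^p+V_{-e_n,\delta}^p\bigr|\;\lesssim\;V_{e_n,\delta}^{\min(p-1,1)}\,V_{-e_n,\delta}^{\max(1,p-1)}+V_{-e_n,\delta}^p\chi_{\{p<2\}},$$
and symmetrically on $\Omega_-$; on $\Omega_c$ both bubbles are comparable and the estimate is straightforward. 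Using Lemma \ref{lemasym'} together with the elementary convolution estimates from Lemma \ref{lemb3}, each factor of the dominated bubble contributes a factor $\delta^{n-2-\frac{n}{p+1}}=\delta^{\frac{n(p-1)}{p+1}}$ (and similarly $\delta^{\frac{n(q-1)}{q+1}}$ for $U$), which translates into an $L^{\frac{p+1}p}$ bound of order $\delta^{\frac{n}{p+1}}=O(\epsilon^{\frac{n}{p+1}})$. Analogously one gets an $L^{\frac{q+1}q}$ bound of order $\delta^{\frac{n}{q+1}}$ when $p>\frac n{n-2}$, and of order $\delta^{\frac{(p-1)n}{q+1}}$ when $p<\frac n{n-2}$, the latter arising from the slower decay $U\sim r^{-((n-2)p-2)}$ in Lemma \ref{lemasym'}. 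Since $p<q$ on the critical hyperbola, $\frac{n}{q+1}>\frac12$; and it is precisely condition \textbf{(P)(ii)}, as flagged in Remark 1.2, that secures $\frac{(p-1)n}{q+1}>\frac12$ in the regime $p<\frac{n}{n-2}$. Choosing $\gamma'>0$ as the minimum of these exponents minus $1/2$ yields the desired $O(\epsilon^{1/2+\gamma'})$.

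The principal obstacle will be the second case of term $(C)_2$ when $p<\frac{n}{n-2}$: the bubble $U_{e_n,\delta}^{q-1}U_{-e_n,\delta}$ is genuinely more singular than in the standard case because $U$ decays with exponent $(n-2)p-2<n-2$, and the integrability threshold in $L^{\frac{q+1}q}$ is delicate. One must carefully split the region of integration near $e_n$ from the far field, invoking the sharp asymptotics of Lemma \ref{lemasym'} and Remark \ref{rem2.3}, and ultimately the quadratic inequality defining $p_n$ in \textbf{(P)(ii)} is exactly what ensures that the resulting exponent of $\delta=d\epsilon$ exceeds $1/2$.
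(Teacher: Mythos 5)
Your decomposition of $R_{d,\epsilon}$ into the three pieces $(A)_i,(B)_i,(C)_i$ mirrors the paper's three-step proof, and you correctly identify that assumption \textbf{(P)(ii)} is exactly what rescues the interaction term $(C)_2$ in the regime $p<\tfrac{n}{n-2}$. However, the reduction
$\|R_{d,\epsilon}\|_\epsilon\le C\bigl(\|\mathcal E_1\|_{L^{\frac{p+1}{p}}}+\|\mathcal E_2\|_{L^{\frac{q+1}{q}}}\bigr)$
is false, and this is the central technical gap. The $\|\cdot\|_\epsilon$ norm contains the $L^{q_\epsilon+1}\times L^{p_\epsilon+1}$ piece, with $q_\epsilon>q$. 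On the bounded domain $\Omega$ the Sobolev embedding $W^{2,\frac{p+1}{p}}(\Omega)\hookrightarrow L^{s}(\Omega)$ holds precisely for $s\le q+1$; it does not reach $L^{q_\epsilon+1}(\Omega)$. So $L^{\frac{p+1}{p}}$-control of $\Delta(K\mathcal E_1)$ does not transfer into $L^{q_\epsilon+1}$-control of $K\mathcal E_1$. To close this gap one needs to estimate $\mathcal E_1$ in $L^{\frac{(p+1)(1+\sigma)}{p}}$ for some $\sigma>0$ (and similarly $\mathcal E_2$ in $L^{\frac{(q+1)(1+\sigma)}{q}}$); this is exactly what the paper's Lemmas \ref{lemb11}, \ref{lemb11'}, \ref{lemb12} are formulated to provide, and the paper's Step~2 then combines the $\sigma$-strengthened bound with the elementary interpolation $\|u\|_{L^{q_\epsilon+1}}\le\|u\|_{L^{q_{\bar\epsilon}+1}}|\Omega|^{\frac1{q_\epsilon+1}-\frac1{q_{\bar\epsilon}+1}}$ for a suitably chosen $\bar\epsilon>0$. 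Your sentence ``transfer $W^{2,s}$ control into the $L^{q_\epsilon+1}\times L^{p_\epsilon+1}$ part'' gestures at this, but the proof as written never actually introduces the enlarged Lebesgue exponent, so the argument is incomplete.

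There are also two quantitative slips in your estimates of $(B)$ and $(C)$ that are worth flagging, though they do not destroy the final exponent $\tfrac12+\gamma'$. First, your bound $(B)_2=O(\delta^{\min(1,p)})$ fails when $n=4$: there $p<3$ forces $p+1<4=\tfrac{n}{n-3}$, and Lemma \ref{lemb3} gives only $\|PW_{2,\delta}-W_{2,\delta}\|_{L^{p+1}}=O(\delta^{\frac{4}{q+1}})$ with $\tfrac{4}{q+1}\in(\tfrac23,1)$ — strictly less than $1$, which is why the paper's Lemma \ref{lemb11} carries a separate $n=4$ case. Second, the identity $\delta^{\,n-2-\frac{n}{p+1}}=\delta^{\frac{n(p-1)}{p+1}}$ you use in the $(C)$ step is incorrect: the left-hand exponent is $\tfrac{(n-2)p-2}{p+1}=\tfrac{n}{q+1}$, which equals $\tfrac{n(p-1)}{p+1}$ only when $n=2(p+1)$. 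Using the correct value $\tfrac{n}{q+1}$ (and its analogue $\tfrac{(p-1)n}{q+1}$ when $p<\tfrac n{n-2}$) is needed to see that the required inequality $>\tfrac12$ is precisely the one guaranteed by \textbf{(P)}; this bookkeeping is done carefully in \eqref{b11f} and \eqref{b11f'}.
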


\begin{proof}
Rewrite
\begin{align*}
 R_{d,\epsilon}&=\Pi_\epsilon^\perp\Big(K(f_{2,\epsilon}(PW_{2,\delta}),f_{1,\epsilon}(PW_{1,\delta}))-(PW_{1,\delta},PW_{2,\delta})\Big)\\
&=\Pi_\epsilon^\perp\circ K(f_{2,\epsilon}(PW_{2,\delta})-f_{2,0}(PW_{2,\delta}),f_{1,\epsilon}(PW_{1,\delta})-f_{1,0}(PW_{1,\delta}))\\
&\quad+\Pi_\epsilon^\perp\Big(K(f_{2,0}(PW_{2,\delta}),f_{1,0}(PW_{1,\delta}))-(PW_{1,\delta},PW_{2,\delta})\Big).
\end{align*}
Recall the norm $\|\cdot\|_\epsilon=\|\cdot\|+\|\cdot\|_{L^{q_\epsilon+1}\times L^{p_\epsilon+1}}$ in \eqref{norm}. We perform the proof in four steps.
\medskip

{\bf Step 1.}  We first deal with the term
\begin{align}
\Big\|\Pi_\epsilon^\perp\Big(K(f_{2,0}(PW_{2,\delta}),f_{1,0}(PW_{1,\delta}))-(PW_{1,\delta},PW_{2,\delta})\Big)\Big\|.
\end{align}

Write
\begin{align}\label{vw}
\begin{split}
&(v_{1,\delta},v_{2,\delta})=K(f_{2,0}(PW_{2,\delta}),f_{1,0}(PW_{1,\delta})),\\
&(w_{1,\delta},w_{2,\delta})=K(f_{2,0}(V_{e_n,\delta})-f_{2,0}(V_{-e_n,\delta}),f_{1,0}(U_{e_n,\delta})-f_{1,0}(U_{-e_n,\delta}))
\end{split}\end{align}
that is
\begin{align*}
\begin{cases}
-\Delta v_{1,\delta}=f_{2,0}(PV_{e_n,\delta}-PV_{-e_n,\delta})\\
-\Delta v_{2,\delta}=f_{1,0}(PU_{e_n,\delta}-PU_{-e_n,\delta}),\\
 \int_\Omega v_{i,\delta}=0,\ \partial_\nu v_{i,\delta}=0\ on\ \partial\Omega,\ i=1,2
 \end{cases}
\end{align*}
and
\begin{align*}
\begin{cases}
-\Delta w_{1,\delta}=f_{2,0}(V_{e_n,\delta})-f_{2,0}(V_{-e_n,\delta})\\
-\Delta w_{2,\delta}=f_{1,0}(U_{e_n,\delta})-f_{1,0}(U_{-e_n,\delta}),\\
 \int_\Omega w_{i,\delta}=0,\ \partial_\nu w_{i,\delta}=0\ on\ \partial\Omega,\ i=1,2.
 \end{cases}
\end{align*}

In the case of $p>\frac n{n-2}$,
recalling the definition of the norm $\|\cdot\|$, from \eqref{f2'} and Remark \ref{remp}, there exists some small $\gamma'>0$,
\begin{align*}
&\| v_{1,\delta}-w_{1,\delta}\|=\|\Delta v_{1,\delta}-\Delta w_{1,\delta}\|_{L^{\frac{p+1}p}}\\&\leq\|f_{2,0}(PV_{e_n,\delta}-PV_{-e_n,\delta})-f_{2,0}(V_{e_n,\delta})+f_{2,0}(V_{-e_n,\delta})\|_{L^{\frac{p+1}p}}\\
&\leq\|f_{2,0}(PV_{e_n,\delta}-PV_{-e_n,\delta})-f_{2,0}(V_{e_n,\delta}-V_{-e_n,\delta})\|_{L^{\frac{p+1}p}}\\&\quad+
\|f_{2,0}(V_{e_n,\delta}-V_{-e_n,\delta})-f_{2,0}(V_{e_n,\delta})+f_{2,0}(V_{-e_n,\delta})\|_{L^{\frac{p+1}p}}=O(\delta^{\frac12+\gamma'}),
\end{align*}
or
\begin{align*}
\begin{split}
&\|K(f_{2,0}(PW_{2,\delta}),f_{1,0}(PW_{1,\delta}))-(PW_{1,\delta},PW_{2,\delta})\|=O(\delta^{\frac12+\gamma'}).
\end{split}
\end{align*}

\medskip
While for $p<\frac n{n-2}$,
we use Lemma \ref{lemb11'} and Remark \ref{remP'} to get the same estimate that for some $\gamma'\in(0,\frac12)$
\begin{align}
\begin{split}
&\|K(f_{2,0}(PW_{2,\delta}),f_{1,0}(PW_{1,\delta}))-(PW_{1,\delta},PW_{2,\delta})\|
=O(\delta^{\frac12+\gamma'}).
\end{split}
\end{align}

\medskip

{\bf Step 2.}  We show that
\begin{align}\label{lem3.8}
\begin{split}
&\|K(f_{2,0}(PW_{2,\delta}),f_{1,0}(PW_{1,\delta}))-(PW_{1,\delta},PW_{2,\delta})\|_{L^{q_\epsilon+1}\times L^{p_\epsilon+1}}=O(\delta^{\frac12+\gamma'}).
\end{split}
\end{align}

Actually, using the notation \eqref{vw} and setting $$u_{i,\delta}=v_{i,\delta}-w_{i,\delta}(i=1,2),$$
we have
\begin{align*}
\begin{cases}
-\Delta u_{1,\delta}=f_{2,0}(PV_{e_n,\delta}-PV_{-e_n,\delta})-f_{2,0}(V_{e_n,\delta})+f_{2,0}(V_{-e_n,\delta})\\
-\Delta u_{2,\delta}=f_{1,0}(PU_{e_n,\delta}-PU_{-e_n,\delta})-f_{1,0}(U_{e_n,\delta})+f_{1,0}(U_{-e_n,\delta}),\\
 \int_\Omega u_{i,\delta}=0,\ \partial_\nu u_{i,\delta}=0\ on\ \partial\Omega,\ i=1,2
 \end{cases}
\end{align*}
or
\begin{align*}
(u_{1,\delta},u_{2,\delta})=K(f_{2,0}(PW_{2,\delta})-f_{2,0}(V_{e_n,\delta})+f_{2,0}(V_{-e_n,\delta}),
f_{1,0}(PW_{1,\delta})-f_{1,0}(U_{e_n,\delta})+f_{1,0}(U_{-e_n,\delta})).
\end{align*}

(i) The case of $p>\frac n{n-2}$.

By Lemma \ref{lemb11}, for any $\gamma>0$ small, there exist $\sigma>0$ and some $\bar\epsilon>0$ small such that
$\frac1{\frac1{q_{\bar\epsilon}+1}+\frac2n}=\frac{(p+1)(1+\sigma)}{p}$.
Then for $n\geq5$, for any $\epsilon\in(0,\bar\epsilon)$,
\begin{align}\label{barepsilon}
\begin{split}
&\|u_{1,\delta}\|_{L^{q_\epsilon+1}}\leq\|u_{1,\delta}\|_{L^{q_{\bar\epsilon}+1}}|\Omega|^{\frac1{q_\epsilon+1}-\frac1{q_{\bar\epsilon}+1}}\\
&\leq C_\epsilon\|f_{2,0}(PW_{2,\delta})-f_{2,0}(V_{e_n,\delta})+f_{2,0}(V_{-e_n,\delta})\|_{L^{\frac1{\frac1{q_{\bar\epsilon}+1}+\frac2n}}}\\
&\leq C_\epsilon\|f_{2,0}(PW_{2,\delta})-f_{2,0}(W_{2,\delta})\|_{L^{\frac1{\frac1{q_{\bar\epsilon}+1}+\frac2n}}}
+C_\epsilon\|f_{2,0}(W_{2,\delta})-f_{2,0}(V_{e_n,\delta})+f_{2,0}(V_{-e_n,\delta})\|_{L^{\frac1{\frac1{q_{\bar\epsilon}+1}+\frac2n}}}\\
&=C_\epsilon\|f_{2,0}(PW_{2,\delta})-f_{2,0}(W_{2,\delta})\|_{L^{\frac{(p+1)(1+\sigma)}{p}}}
+C_\epsilon\|f_{2,0}(W_{2,\delta})-f_{2,0}(V_{e_n,\delta})+f_{2,0}(V_{-e_n,\delta})\|_{L^{\frac{(p+1)(1+\sigma)}{p}}}\\
&=O(\delta^{\frac12+\gamma'}).
\end{split}
\end{align}
Also, for $n=4$ and for any $\epsilon\in(0,\bar\epsilon)$,
$
 \|u_{1,\delta}\|_{L^{q_\epsilon+1}}
=O(\delta^{\frac12+\gamma'}).
$
\medskip

(ii) The case of $p<\frac n{n-2}$.

In this case we just use Lemma \ref{lemb11'} and Remark \ref{rem'} to find that for $\gamma'>0$ small,
\begin{align}\label{barepsilon'}
\begin{split}
&\|u_{1,\delta}\|_{L^{q_\epsilon+1}}\leq\|u_{1,\delta}\|_{L^{q_{\bar\epsilon}+1}}|\Omega|^{\frac1{q_\epsilon+1}-\frac1{q_{\bar\epsilon}+1}}\\
&\leq C_\epsilon \|f_{2,0}(PW_{2,\delta})-f_{2,0}(W_{2,\delta})\|_{L^{\frac{(p+1)(1+\sigma)}{p}}}
+C_\epsilon\|f_{2,0}(W_{2,\delta})-f_{2,0}(V_{e_n,\delta})+f_{2,0}(V_{-e_n,\delta})\|_{L^{\frac{(p+1)(1+\sigma)}{p}}}\\
&=O(\delta^{\frac12+\gamma'}).
\end{split}
\end{align}
Similarly,  we can prove
$
 \|u_{2,\delta}\|_{L^{p_\epsilon+1}}=
O(\delta^{\frac12+\gamma'}),
$
and so we have also shown \eqref{lem3.8} in this case (ii).

\medskip
\medskip

{\bf Step 3.}  For the term $\Pi_\epsilon^\perp\circ K(f_{2,\epsilon}(PW_{2,\delta})-f_{2,0}(PW_{2,\delta}),f_{1,\epsilon}(PW_{1,\delta})-f_{1,0}(PW_{1,\delta}))$,
we apply Lemma \ref{lemb12} to obtain that for any $\gamma\in(0,1)$,
\begin{align*}
\begin{split}
&\|K(f_{2,\epsilon}(PW_{2,\delta})-f_{2,0}(PW_{2,\delta}),f_{1,\epsilon}(PW_{1,\delta})-f_{1,0}(PW_{1,\delta}))\|\\
&\leq \|(f_{2,\epsilon}(PW_{2,\delta})-f_{2,0}(PW_{2,\delta}),f_{1,\epsilon}(PW_{1,\delta})-f_{1,0}(PW_{1,\delta}))\|_{L^{\frac{p+1}p}\times L^{\frac{q+1}q}}=O(\epsilon^{1-\gamma})
\end{split}\end{align*}
and following the argument in \eqref{barepsilon}, for small $\bar\epsilon,\sigma_1,\sigma_2>0$, there holds that
\begin{align*}
\begin{split}
&\|K(f_{2,\epsilon}(PW_{2,\delta})-f_{2,0}(PW_{2,\delta}),f_{1,\epsilon}(PW_{1,\delta})-f_{1,0}(PW_{1,\delta}))\|_{L^{q_\epsilon+1}\times L^{p_\epsilon+1}}\\
&\leq C_\epsilon\|(f_{2,\epsilon}(PW_{2,\delta})-f_{2,0}(PW_{2,\delta}),f_{1,\epsilon}(PW_{1,\delta})-f_{1,0}(PW_{1,\delta}))\|_{L^{\frac1{\frac1{q_{\bar\epsilon}+1}+\frac2n}}\times L^{\frac1{\frac1{p_{\bar\epsilon}+1}+\frac2n}}}\\
&\leq C_\epsilon\|(f_{2,\epsilon}(PW_{2,\delta})-f_{2,0}(PW_{2,\delta}),f_{1,\epsilon}(PW_{1,\delta})-f_{1,0}(PW_{1,\delta}))\|_{L^{\frac{(p+1)(1+\sigma_1)}p}\times L^{\frac{(q+1)(1+\sigma_2)}q}}\\&
=O(\epsilon^{1-\gamma}).
\end{split}\end{align*}

\medskip

Combining the above three steps, we have the conclusion.

\end{proof}

\medskip
As for the higher order terms, we have
\begin{Lem}
Let $\eta\in(0,1),d>0,\delta=d\epsilon$. Then as $\epsilon\rightarrow0$, the following estimates hold uniformly in $d\in(\eta,\frac1\eta)$:
\begin{align*}
\|N_{d,\epsilon}(\phi_1,\phi_2)\|_\epsilon\leq C
\begin{cases}
\|\phi_1\|_{L^{q_\epsilon+1}}^{q_\epsilon}+\|\phi_2\|_{L^{p_\epsilon+1}}^{p_\epsilon},\ &1<p,q<2\\
\|\phi_1\|_{L^{q_\epsilon+1}}^{2}+\|\phi_2\|_{L^{p_\epsilon+1}}^{p_\epsilon},\ &1<p<2\leq q\\
\|\phi_1\|_{L^{q_\epsilon+1}}^{2}+\|\phi_2\|_{L^{p_\epsilon+1}}^{2},\ &p,q\geq2.
\end{cases}
\end{align*}

\end{Lem}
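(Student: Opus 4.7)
The strategy is to bound $\|N_{d,\epsilon}(\phi_1,\phi_2)\|_\epsilon$ via pointwise Taylor-remainder estimates for the two power nonlinearities, combined with Hölder's inequality. Since $\Pi_\epsilon^\perp$ is bounded on $H_\epsilon$ and the norm decomposes as $\|\cdot\|_\epsilon=\|\cdot\|+\|\cdot\|_{L^{q_\epsilon+1}\times L^{p_\epsilon+1}}$, it suffices to bound both pieces of $\|K(F_2,F_1)\|_\epsilon$ where
\[
F_i := f_{i,\epsilon}(PW_{i,\delta}+\phi_i) - f_{i,\epsilon}'(PW_{i,\delta})\phi_i - f_{i,\epsilon}(PW_{i,\delta}),\qquad i=1,2.
\]
For the $\|\cdot\|$-piece, the definition \eqref{norm'} together with $-\Delta(KF_i)=F_i$ yields directly
\[
\|K(F_2,F_1)\| = \|F_2\|_{L^{(p+1)/p}(\Omega)} + \|F_1\|_{L^{(q+1)/q}(\Omega)}.
\]
For the $L^{q_\epsilon+1}\times L^{p_\epsilon+1}$-piece the plan is to reuse the $\bar\epsilon$-interpolation trick from \eqref{barepsilon}–\eqref{barepsilon'}: fix $\bar\epsilon>\epsilon$ and $\sigma>0$ small so that $\bigl(\tfrac1{q_{\bar\epsilon}+1}+\tfrac2n\bigr)^{-1}=\tfrac{(p+1)(1+\sigma)}{p}$, then bounded-domain Hölder and Sobolev embedding give $\|KF_2\|_{L^{q_\epsilon+1}}\leq C\|KF_2\|_{L^{q_{\bar\epsilon}+1}}\leq C\|F_2\|_{L^{(p+1)(1+\sigma)/p}}$, and analogously for $KF_1$.

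The heart of the argument is the elementary pointwise remainder inequality for $f(t)=|t|^{s-1}t$:
\[
\bigl|f(a+b)-f(a)-f'(a)b\bigr| \leq
\begin{cases}
C|b|^s, & 1<s<2,\\
C\bigl(|a|^{s-2}|b|^2 + |b|^s\bigr), & s\geq 2,
\end{cases}
\]
applied with $s=p_\epsilon$, $a=PW_{2,\delta}$, $b=\phi_2$ to $F_2$, and with $s=q_\epsilon$, $a=PW_{1,\delta}$, $b=\phi_1$ to $F_1$. For $\epsilon$ small the dichotomies $p_\epsilon<2$ vs.~$p_\epsilon\geq 2$ and $q_\epsilon<2$ vs.~$q_\epsilon\geq 2$ coincide with those for $p,q$. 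Integrating, the pure-power term $|b|^s$ contributes $\|\phi_2\|_{L^{p_\epsilon+1}}^{p_\epsilon}$ (resp.~$\|\phi_1\|_{L^{q_\epsilon+1}}^{q_\epsilon}$) after Hölder, while the cross term $|a|^{s-2}|b|^2$ contributes $\|PW_{2,\delta}\|_{L^{p+1+O(\epsilon)}}^{p_\epsilon-2}\|\phi_2\|_{L^{p_\epsilon+1}}^{2}=O(1)\,\|\phi_2\|_{L^{p_\epsilon+1}}^{2}$ thanks to the uniform bound of Lemma~\ref{lemb3} (and symmetrically for the $F_1$ part with $q$-exponents). Sorting by regime ($1<p,q<2$; $1<p<2\leq q$; $p,q\geq 2$) yields the three lines in the statement.

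The main expected obstacle is the exponent bookkeeping. The critical-hyperbola embedding $\dot W^{2,(p+1)/p}\hookrightarrow L^{q+1}$ is saturated, and the perturbation $p\mapsto p_\epsilon$, $q\mapsto q_\epsilon$ pushes the matching relation $(1+\sigma)\tfrac{p+1}{p}\cdot p_\epsilon\leq p_\epsilon+1$ in the unfavorable direction. The $\bar\epsilon$-trick offsets this by trading a small amount of Sobolev target integrability for room in the Hölder exponent, so that the finite-measure factor $|\Omega|^{\text{small}}$ absorbs the mismatch uniformly for $\epsilon\in(0,\bar\epsilon)$. A secondary but important point is that in the mixed case $1<p<2\leq q$ the two pointwise Taylor inequalities differ, so the estimates for $F_1$ and $F_2$ must be carried out separately; the components cannot be pooled into a single $\|\phi\|^s$ power. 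With these ingredients in place the bound claimed in each of the three cases follows from combining the two norm pieces, and the proof reduces to a careful but routine verification of the exponent inequalities.
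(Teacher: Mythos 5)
Your proposal matches the paper's own proof in essence: the same expansion of $\|N_{d,\epsilon}\|_\epsilon$ into its two norm pieces, the same pointwise Taylor--remainder inequality for $|t|^{s-1}t$ split into the regimes $1<s<2$ and $s\geq 2$, and the same sorting by the three cases on $(p,q)$. The one discrepancy is cosmetic: for the $L^{q_\epsilon+1}\times L^{p_\epsilon+1}$ piece the paper passes directly through the $\epsilon$-dependent Sobolev exponent $\frac{1}{1/(q_\epsilon+1)+2/n}$, while you invoke the $\bar\epsilon$-interpolation device that the paper uses instead in the $R_{d,\epsilon}$ estimate; both routes are fine, since that exponent stays in a compact interval away from $1$ and the elliptic constants are therefore already uniform. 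Be aware, however, that the $\bar\epsilon$-trick does not ``offset'' the $O(\epsilon)$ surplus of the resulting source exponent over $p_\epsilon+1$ that you flag: raising $\bar\epsilon$ and $\sigma$ only pushes $p_\epsilon\cdot\frac{(p+1)(1+\sigma)}{p}$ further above $p_\epsilon+1$. The same small surplus is implicit in the paper's own computation (e.g.\ $\frac{(p+1)p_\epsilon}{p}=p_\epsilon+1+\frac{\alpha\epsilon}{p}$), and it is harmless for the contraction step, which only uses superlinearity of $N_{d,\epsilon}$ in $\|\phi\|_\epsilon$; but an airtight closure would replace the crude bound $|F_2|\leq C|\phi_2|^{p_\epsilon}$ by the sharper $|F_2|\leq C\bigl(|PW_{2,\delta}|+|\phi_2|\bigr)^{p_\epsilon-1}|\phi_2|$ and exploit the $O(\delta^{-O(\epsilon)})$ growth of $\|PW_{2,\delta}\|_{L^{p+1+O(\epsilon)}}$ from the appendix bubble estimates.
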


\begin{proof}
Note that
\begin{align}\label{Nepsilon}
\begin{split}
 \|N_{d,\epsilon}(\phi_1,\phi_2)\|_\epsilon
&=\|\Pi_\epsilon^\perp\circ K\Big(f_{2,\epsilon}(PW_{2,\delta}+\phi_2)-f'_{2,\epsilon}(PW_{2,\delta})\phi_2-f_{2,\epsilon}(PW_{2,\delta}),\\
&\qquad +f_{1,\epsilon}(PW_{1,\delta}+\phi_1)-f'_{1,\epsilon}(PW_{1,\delta})\phi_1-f_{1,\epsilon}(PW_{1,\delta})\Big)\|_\epsilon\\
&\leq C\|f_{2,\epsilon}(PW_{2,\delta}+\phi_2)-f'_{2,\epsilon}(PW_{2,\delta})\phi_2-f_{2,\epsilon}(PW_{2,\delta})\|_{L^{\frac{p+1}p}}\\&\quad
+C\|f_{1,\epsilon}(PW_{1,\delta}+\phi_1)-f'_{1,\epsilon}(PW_{1,\delta})\phi_1-f_{1,\epsilon}(PW_{1,\delta})\|_{L^{\frac{q+1}q}}\\
&\quad+C\|K\Big(f_{2,\epsilon}(PW_{2,\delta}+\phi_2)-f'_{2,\epsilon}(PW_{2,\delta})\phi_2-f_{2,\epsilon}(PW_{2,\delta}),\\
&\qquad +f_{1,\epsilon}(PW_{1,\delta}+\phi_1)-f'_{1,\epsilon}(PW_{1,\delta})\phi_1-f_{1,\epsilon}(PW_{1,\delta})\Big)\|_{L^{q_\epsilon+1}\times L^{p_\epsilon+1}}\\
&\leq C\|f_{2,\epsilon}(PW_{2,\delta}+\phi_2)-f'_{2,\epsilon}(PW_{2,\delta})\phi_2-f_{2,\epsilon}(PW_{2,\delta})\|_{L^{\frac{p+1}p}}\\&\quad
+C\|f_{1,\epsilon}(PW_{1,\delta}+\phi_1)-f'_{1,\epsilon}(PW_{1,\delta})\phi_1-f_{1,\epsilon}(PW_{1,\delta})\|_{L^{\frac{q+1}q}}\\
&\quad+C\|f_{2,\epsilon}(PW_{2,\delta}+\phi_2)-f'_{2,\epsilon}(PW_{2,\delta})\phi_2-f_{2,\epsilon}(PW_{2,\delta})\|_{L^{\frac1{\frac1{q_\epsilon+1}+\frac2n}}}\\
&\quad +C\|f_{1,\epsilon}(PW_{1,\delta}+\phi_1)-f'_{1,\epsilon}(PW_{1,\delta})\phi_1-f_{1,\epsilon}(PW_{1,\delta})\|_{L^{\frac1{\frac1{p_\epsilon+1}+\frac2n}}}.
\end{split}\end{align}

Since
\begin{align*}
\begin{split}
&\Big|f_{2,\epsilon}(PW_{2,\delta}+\phi_2)-f'_{2,\epsilon}(PW_{2,\delta})\phi_2-f_{2,\epsilon}(PW_{2,\delta})\Big|\\&
\leq C\begin{cases}
|PW_{2,\delta}|^{p_\epsilon-2}|\phi_2|^2+|\phi_2|^{p_\epsilon}, \ &p\geq2\\
|\phi_2|^{p_\epsilon}, \ &1<p<2,
\end{cases}
\end{split}\end{align*}
then from Lemma \ref{lemb3}, when $1<p<2$,
\begin{align}
\begin{split}
&\|f_{2,\epsilon}(PW_{2,\delta}+\phi_2)-f'_{2,\epsilon}(PW_{2,\delta})\phi_2-f_{2,\epsilon}(PW_{2,\delta})\|_{L^{\frac{p+1}p}}\\
&\quad+\|f_{2,\epsilon}(PW_{2,\delta}+\phi_2)-f'_{2,\epsilon}(PW_{2,\delta})\phi_2-f_{2,\epsilon}(PW_{2,\delta})\|_{L^{\frac1{\frac1{q_\epsilon+1}+\frac2n}}}\\
&\leq C\|\phi_2\|_{L^{\frac{(p+1)p_\epsilon}{p}}}^{p_\epsilon}+ C\|\phi_2\|_{L^{\frac{p_\epsilon}{\frac1{q_\epsilon+1}+\frac2n}}}^{p_\epsilon}
\leq C\|\phi_2\|_{L^{p_\epsilon+1}}^{p_\epsilon};
\end{split}\end{align}
While for $p\geq2$, it sufficed to further estimate that
\begin{align}
\begin{split}
&\||PW_{2,\delta}|^{p_\epsilon-2}|\phi_2|^2\|_{L^{\frac{p+1}{p}}}+\||PW_{2,\delta}|^{p_\epsilon-2}|\phi_2|^2\|_{L^{\frac1{\frac1{q_\epsilon+1}+\frac2n}}}
\leq C\|\phi_2\|_{L^{p_\epsilon+1}}^{2},
\end{split}\end{align}
where we have used the  H\"older inequalities and the fact that $\frac1{\frac1{q_\epsilon+1}+\frac2n}=\frac{p+1}p+O(\epsilon),\frac{(p+1)p_\epsilon}{p}=\frac{p+1}p+O(\epsilon).$

The estimate for
\begin{align*}
\begin{split}
&\|f_{1,\epsilon}(PW_{1,\delta}+\phi_1)-f'_{1,\epsilon}(PW_{1,\delta})\phi_1-f_{1,\epsilon}(PW_{1,\delta})\|_{L^{\frac{q+1}q}}\\
&\quad +C\|f_{1,\epsilon}(PW_{1,\delta}+\phi_1)-f'_{1,\epsilon}(PW_{1,\delta})\phi_1-f_{1,\epsilon}(PW_{1,\delta})\|_{L^{\frac1{\frac1{p_\epsilon+1}+\frac2n}}}
\end{split}\end{align*}
can be considered similarly.

\end{proof}

\medskip

\subsection{Proof of Proposition \ref{prop3.1}}

Recall that \eqref{Pibot} can be written as
\begin{align*}
L_{d,\epsilon}(\phi_1,\phi_2)=N_{d,\epsilon}(\phi_1,\phi_2)+R_{d,\epsilon},
\end{align*} which, by Proposition \ref{propL},
 is equivalent to the fixed point problem
 \begin{align*}
(\phi_1,\phi_2)=L_{d,\epsilon}^{-1}\Big(N_{d,\epsilon}(\phi_1,\phi_2)+R_{d,\epsilon}\Big):=\L_{d,\epsilon}(\phi_1,\phi_2)
\end{align*}
with
 \begin{align*}
\|\L_{d,\epsilon}(\phi_1,\phi_2)\|_\epsilon\leq C\|N_{d,\epsilon}(\phi_1,\phi_2)\|_\epsilon+C\|R_{d,\epsilon}\|_\epsilon.
\end{align*}

\begin{proof}[
\textbf{Proof of Proposition \ref{prop3.1}}]
Take $C_0>0$ be a constant such that
$\|R_{d,\epsilon}\|_\epsilon\leq C_0\epsilon^{\frac12+\gamma'}$.
 Set
$$\mathcal B:=\{(\phi_1,\phi_2)\in E_{d,\epsilon}:\|(\phi_1,\phi_2)\|_\epsilon\leq 2C_0\epsilon^{\frac12+\gamma'}\}.$$
Then for sufficiently small $\epsilon>0$, $\L_{d,\epsilon}(\mathcal B)\subset\mathcal B$.
Moreover, it is not difficult to prove that there is some $\theta\in(0,1)$ such that
$$\|\L_{d,\epsilon}(\phi_1-\psi_1,\phi_2-\psi_2)\|_\epsilon\leq \theta\|(\phi_1-\psi_1,\phi_2-\psi_2)\|_\epsilon,\ \ for\ every\ (\phi_1,\phi_2),(\psi_1,\psi_2)\in\mathcal B.$$
Hence, for any given $\eta\in(0,1),d\in(\eta,\frac1\eta),\delta=d\epsilon$, applying the Banach Fixed Point Theorem, there exists a unique fixed point $(\phi_{1,d,\epsilon},\phi_{2,d,\epsilon})$ of $\L_{d,\epsilon}$.

Finally, by use of the implicit function theorem, we argue standardly that the map $d\mapsto(\phi_{1,d,\epsilon},\phi_{2,d,\epsilon})$ is of class $C^1$.
One could refer to \cite{mp,pt} for example for more details.

\end{proof}

\medskip

\section{Reduced Problem}

Set the functional $$I_\epsilon(u_1,u_2)=\int_\Omega\nabla u_1\cdot\nabla u_2-\frac1{p_\epsilon+1}\int_\Omega|u_2|^{p_\epsilon+1}-\frac1{q_\epsilon+1}\int_\Omega|u_1|^{q_\epsilon+1}.
$$
For each $\eta$ small, let $\epsilon_0>0$ be as in Proposition \ref{prop3.1}. Then for $\epsilon\in(0,\epsilon_0), \delta=d\epsilon$, consider the reduced functional
$J_\epsilon:(\eta,\frac1\eta)\rightarrow\R$ which is given by
$$J_\epsilon(d):=I_\epsilon(PW_{1,\delta}+\phi_{1,d,\epsilon},PW_{2,\delta}+\phi_{2,d,\epsilon})$$
with $(\phi_{1,d,\epsilon},\phi_{2,d,\epsilon})\in E_{d,\epsilon}$ as in Proposition \ref{prop3.1}.

It is standard to show (see for instance \cite{mp}) that $$J'_\epsilon(d)=0\Longleftrightarrow I'_\epsilon(PW_{1,\delta}+\phi_{1,d,\epsilon},PW_{2,\delta}+\phi_{2,d,\epsilon})=0.$$
We are aimed to check that $J_\epsilon$ has a minimizer in $(\eta,\frac1\eta)$ for sufficiently small $\eta$, which is a critical point.
For this purpose, we expand $J_\epsilon(d)$ as $\epsilon\rightarrow0$.
For convenience, we introduce the following quantities:
\begin{align}\label{notation}
\begin{split}
&\mathcal A_1=\int_{\R^n}U^{q+1},\ \  \mathcal A_2=\int_{\R^n}V^{p+1},\\
&\mathcal B_1=\int_\Xi\int_{\sqrt{1-|x|^2}}^1\delta^{-n-1}U^{q+1}\Big(\frac{|x-e_n|}{\delta}\Big)dx_ndx',\\&
\mathcal B_2=\int_\Xi\int_{\sqrt{1-|x|^2}}^1\delta^{-n-1}V^{p+1}\Big(\frac{|x-e_n|}{\delta}\Big)dx_ndx',\\
&\mathcal C_1=-\int_{\partial\R^{n}_+}|x'|U'(x')V(x')dx'>0,\ \ \mathcal C_2=-\int_{\partial\R^{n}_+}|x'|V'(x')U(x')dx'>0,\\
&\mathcal D_1=\int_{\R^n} U^{q+1}\log U,\ \ \mathcal D_2=\int_{\R^n} V^{p+1}\log V
\end{split}
\end{align}
with $\Xi=\{x'\in\R^{n-1}: |x'|<\frac{\sqrt{15}}8\}$ the projection of $\partial\Omega\cap B_{\frac12}(e_n)$ in the variables $x'=(x_1,\ldots,x_{n-1})$.
\begin{Rem}
By \eqref{eqUV}, $\mathcal A_1=\mathcal A_2$. Moreover, from the proof of Lemma \ref{lemc1}, we can obtain that $\mathcal B_1=O(1),\mathcal B_2=O(1)$.
\end{Rem}

\medskip
\begin{Prop}\label{propJd}
For any $\eta\in(0,1)$ small, as $\epsilon\rightarrow0$, it holds uniformly in $d\in(\eta,\frac1\eta)$ that
\begin{align}\label{Jd}
\begin{split}
 &J_\epsilon(d)=\int_\Omega\nabla PW_{1,\delta}\cdot\nabla PW_{2,\delta}-\frac1{p_\epsilon+1}\int_\Omega|PW_{2,\delta}|^{p_\epsilon+1}-\frac1{q_\epsilon+1}\int_\Omega|PW_{1,\delta}|^{q_\epsilon+1}\\
  =&\frac2n\mathcal A_1+\Big(\frac {n\alpha}{(p+1)^2}+\frac {n\beta}{(q+1)^2}\Big)\mathcal A_1\epsilon\log\epsilon
 +\mathcal G(d)\epsilon+o(\epsilon)
\end{split}\end{align}
with
\begin{align}\label{G}
\begin{split}
\mathcal G(d)&= \frac1{p+1}(\frac{\alpha\mathcal A_1}{p+1}-\alpha\mathcal D_1)+\frac1{q+1}(\frac{\beta\mathcal A_2}{q+1}-\beta\mathcal D_2)
+(\frac {n\alpha}{(p+1)^2}+\frac {n\beta}{(q+1)^2})\mathcal A_1\log d\\&-\Big((1-\frac2{p+1})\mathcal B_2+(1-\frac2{q+1})\mathcal B_1+\frac{\mathcal C_1+\mathcal C_2}2\Big)d.
\end{split}\end{align}
\end{Prop}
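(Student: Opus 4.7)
The plan is to proceed in two stages: first reduce $J_\epsilon(d)$ to $I_\epsilon(PW_{1,\delta},PW_{2,\delta})$ up to an $o(\epsilon)$ error using the smallness of $(\phi_{1,d,\epsilon},\phi_{2,d,\epsilon})$ from Proposition~\ref{prop3.1}, and then expand the three integrals composing $I_\epsilon(PW_{1,\delta},PW_{2,\delta})$ separately using the projection expansion of Proposition~\ref{lemexpansion}, the asymptotic estimates in Lemmas~\ref{lemasym}--\ref{lemasym'}, and Taylor expansion in the supercritical parameter $\epsilon$.

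For the reduction step, I would Taylor expand $I_\epsilon$ around $(PW_{1,\delta},PW_{2,\delta})$ to write
\[
J_\epsilon(d)=I_\epsilon(PW_{1,\delta},PW_{2,\delta})+\langle I'_\epsilon(PW_{1,\delta},PW_{2,\delta}),(\phi_1,\phi_2)\rangle+\tfrac12\langle I''_\epsilon(\Theta)(\phi_1,\phi_2),(\phi_1,\phi_2)\rangle.
\]
After integration by parts, the first-order term is exactly an $R_{d,\epsilon}$-type pairing against $(\phi_1,\phi_2)$, hence bounded by $\|R_{d,\epsilon}\|_\epsilon\,\|(\phi_1,\phi_2)\|_\epsilon=O(\epsilon^{1/2+\gamma'})\cdot o(\epsilon^{1-\sigma})=o(\epsilon)$ thanks to Lemma~\ref{lemR} and Proposition~\ref{prop3.1}. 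The Hessian is uniformly bounded in the appropriate operator norm, so the quadratic term is $o(\epsilon^{2-2\sigma})=o(\epsilon)$ for $\sigma<\tfrac12$.

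For the expansion of the bilinear term, integrate by parts with the Neumann condition to obtain $\int_\Omega(V_{e_n,\delta}^p-V_{-e_n,\delta}^p)\,PW_{2,\delta}$ and plug in the expansion of $PW_{2,\delta}$ from Proposition~\ref{lemexpansion}. After rescaling $y=(x\mp e_n)/\delta$, the main bubble--bubble part gives $\mathcal{A}_2$ truncated to a half-space-like domain, the curvature strip $\{-\delta|y'|^2/2<y_n<0\}$ generates the $\mathcal{B}_2$ correction at order $\delta=d\epsilon$, the cross bubble--bubble part is $o(\epsilon)$ since the bubbles are separated by distance $2$, and the $\varphi_{2,0}$ contribution---via a further integration by parts using the defining Neumann data $\partial_{x_n}\varphi_{2,0}|_{x_n=0}=-\tfrac{r}{2}V'(r)|_{x_n=0}$ from \eqref{phi0}---produces the $\tfrac12\mathcal{C}_2\,d$ term. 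For each nonlinear piece, Taylor expand $|t|^{p_\epsilon+1}=|t|^{p+1}(1+\alpha\epsilon\log|t|)+O(\epsilon^2|t|^{p+1}\log^2|t|)$ and $\tfrac1{p_\epsilon+1}=\tfrac1{p+1}-\tfrac{\alpha\epsilon}{(p+1)^2}+O(\epsilon^2)$. The integral $\int|PW_{2,\delta}|^{p+1}\log|PW_{2,\delta}|$, after rescaling with $|PW_{2,\delta}|\sim\delta^{-n/(p+1)}V(\cdot/\delta)$, splits as $-\tfrac{n}{p+1}\mathcal{A}_2\log\delta+\mathcal{D}_2+o(1)$; substituting $\log\delta=\log\epsilon+\log d$ then produces both the $\epsilon\log\epsilon$ coefficient and the $\log d$ piece of $\mathcal{G}(d)$. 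Proceeding analogously for $|t|^{q_\epsilon+1}$ and assembling everything, the leading constant $\tfrac2n\mathcal{A}_1$ emerges from $(1-\tfrac1{p+1}-\tfrac1{q+1})\mathcal{A}_1$ via the critical hyperbola, with the factor $2$ from two bubbles cancelling against the factor $\tfrac12$ from half-space truncation.

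The main technical obstacle is the accurate bookkeeping of the $O(\delta)$ corrections from two independent sources: the geometric curvature of $\partial\Omega$ near $\pm e_n$ (requiring the expansion $x_n=1-\sqrt{1-|x'|^2}=\tfrac{|x'|^2}{2}+O(|x'|^3)$ and giving $\mathcal{B}_i$), and the harmonic projection defect encoded in $\varphi_{i,0}$ (giving $\mathcal{C}_i$). Both enter at the same order $d\epsilon$ and contribute to $\mathcal{G}(d)$ with the correct coefficients only after a careful integration by parts against the full expansion of $PW_{i,\delta}$ in Proposition~\ref{lemexpansion}, together with the decay estimates \eqref{estphi0}--\eqref{estphi0'} and Lemma~\ref{lema1} to absorb all remainders involving $\zeta_{i,\delta}$ into $o(\epsilon)$. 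The analysis must be carried out separately for $p>\tfrac{n}{n-2}$ and $p<\tfrac{n}{n-2}$ because of the different decay rates of $U$ at infinity (reflected in the two branches of Proposition~\ref{lemexpansion} and of the $\varphi_{1,0}$ estimate \eqref{estphi0'}), although by design the final coefficients in $\mathcal{G}(d)$ take the same form in both cases.
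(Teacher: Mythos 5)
Your proposal follows essentially the same route as the paper: reduce $J_\epsilon(d)$ to $I_\epsilon(PW_{1,\delta},PW_{2,\delta})+o(\epsilon)$ via a second-order estimate controlled by $\|R_{d,\epsilon}\|_\epsilon\cdot\|(\phi_1,\phi_2)\|_\epsilon$ and the smallness of the remainder, then expand the bilinear and nonlinear pieces using Proposition~\ref{lemexpansion}, the curvature-strip contribution $\mathcal B_i$ (Lemma~\ref{lemc1}), the half-space defect $\mathcal C_i$ (Lemmas~\ref{lemb8}--\ref{lemb7}), and the $\log\delta=\log\epsilon+\log d$ split. The one place you are a bit glib is the claim that the Hessian is uniformly bounded: the kernel $|PW_i+t\phi_i|^{p_\epsilon-1}$ is not in $L^\infty$, and making the quadratic remainder $o(\epsilon)$ requires the H\"older--with--Lemma~\ref{lemb3} estimates that the paper carries out as the $J_1,J_2$ terms in the proof of the corresponding reduction lemma.
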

\medskip

To show Proposition \ref{propJd}, we use several lemmas as follows.
\begin{Lem}
For $\eta\in(0,1)$ small, as $\epsilon\rightarrow0$, it holds uniformly in $d\in(\eta,\frac1\eta)$ that
\begin{align*}
J_\epsilon(d)=I_\epsilon(PW_{1,\delta},PW_{2,\delta})+o(\epsilon).
\end{align*}
\end{Lem}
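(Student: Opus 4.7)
The plan is to carry out a Taylor expansion of $I_\epsilon$ at the base point $(PW_{1,\delta},PW_{2,\delta})$ and show that every correction beyond the zeroth order term is $o(\epsilon)$ uniformly in $d\in(\eta,\frac1\eta)$. Writing $(\phi_1,\phi_2):=(\phi_{1,d,\epsilon},\phi_{2,d,\epsilon})\in E_{d,\epsilon}$, I would use
\begin{align*}
J_\epsilon(d)-I_\epsilon(PW_{1,\delta},PW_{2,\delta})=I'_\epsilon(PW_{1,\delta},PW_{2,\delta})[\phi_1,\phi_2]+\mathcal R(\phi_1,\phi_2),
\end{align*}
where $\mathcal R$ collects the quadratic and higher order contributions, and then estimate the two pieces separately.

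For the linear term, I would integrate by parts using the Neumann boundary data of $PW_{i,\delta}$ together with $-\Delta PW_{1,\delta}=V_{e_n,\delta}^p-V_{-e_n,\delta}^p$ and $-\Delta PW_{2,\delta}=U_{e_n,\delta}^q-U_{-e_n,\delta}^q$ to rewrite
\begin{align*}
I'_\epsilon(PW_{1,\delta},PW_{2,\delta})[\phi_1,\phi_2]
&=\int_\Omega\bigl(V_{e_n,\delta}^p-V_{-e_n,\delta}^p-f_{2,\epsilon}(PW_{2,\delta})\bigr)\phi_2\\
&\quad+\int_\Omega\bigl(U_{e_n,\delta}^q-U_{-e_n,\delta}^q-f_{1,\epsilon}(PW_{1,\delta})\bigr)\phi_1.
\end{align*}
H\"older's inequality reduces this to the $L^{(p+1)/p}\times L^{(q+1)/q}$ norm of the \emph{unprojected} residual times $\|(\phi_1,\phi_2)\|_{L^{q+1}\times L^{p+1}}$. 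The proof of Lemma \ref{lemR} in fact bounds this unprojected residual by $O(\epsilon^{\frac12+\gamma'})$ (the projection $\Pi_\epsilon^\perp$ is only invoked at the very end of that proof, so the prior estimates apply), while Proposition \ref{prop3.1} controls $\|(\phi_1,\phi_2)\|_\epsilon$ by $o(\epsilon^{1-\sigma})$ for any $\sigma>0$. Choosing $\sigma<\frac12+\gamma'$ yields that this linear term is $o(\epsilon^{\frac32+\gamma'-\sigma})=o(\epsilon)$.

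For $\mathcal R$, the remainder decomposes into the bilinear piece $\int_\Omega\nabla\phi_1\cdot\nabla\phi_2$ plus two nonlinear Taylor remainders of the type $\frac{1}{p_\epsilon+1}|PW_{2,\delta}+\phi_2|^{p_\epsilon+1}-\frac{1}{p_\epsilon+1}|PW_{2,\delta}|^{p_\epsilon+1}-f_{2,\epsilon}(PW_{2,\delta})\phi_2$ and its analogue in the $q_\epsilon$ variable. The bilinear piece is $O(\|(\phi_1,\phi_2)\|^2)$ by H\"older. Standard pointwise bounds of the form $C(|PW_{i,\delta}|^{s-1}\phi_i^2+|\phi_i|^{s+1})$ when the relevant exponent $s\geq 2$, or $C|\phi_i|^{s+1}$ otherwise, together with the $L^{s+1}$-integrability of the bubbles encoded in Lemma \ref{lemb3}, reduce the two nonlinear pieces to a finite sum of $\|(\phi_1,\phi_2)\|_\epsilon^{r}$ for $r\in\{2,p_\epsilon+1,q_\epsilon+1\}$, which are all $o(\epsilon^{2(1-\sigma)})=o(\epsilon)$ by Proposition \ref{prop3.1}.

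The main obstacle, as I see it, lies in the linear term: the orthogonality condition defining $E_{d,\epsilon}$ is phrased in terms of the Dirichlet pairing $\int\nabla\cdot\nabla$, and so it does \emph{not} automatically annihilate $I'_\epsilon(PW_{1,\delta},PW_{2,\delta})[\phi]$. One cannot obtain the required $o(\epsilon)$ estimate purely from the Euler-Lagrange-type identity satisfied by $(\phi_1,\phi_2)$. Resolving this requires the quantitative observation that the full, unprojected residual (not merely its $\Pi_\epsilon^\perp$ image) already satisfies the $O(\epsilon^{\frac12+\gamma'})$ bound inside the proof of Lemma \ref{lemR}, and pairing it with the improved smallness $o(\epsilon^{1-\sigma})$ of $\phi$ from Proposition \ref{prop3.1}. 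The remaining quadratic and higher order terms pose no further difficulty, and summing all contributions gives the conclusion $J_\epsilon(d)=I_\epsilon(PW_{1,\delta},PW_{2,\delta})+o(\epsilon)$ uniformly in $d\in(\eta,\frac1\eta)$.
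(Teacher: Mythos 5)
Your proposal is correct and follows the same route as the paper: Taylor-expand $I_\epsilon$ around $(PW_{1,\delta},PW_{2,\delta})$, bound the linear term $\int(U_{e_n,\delta}^q-U_{-e_n,\delta}^q-f_{1,\epsilon}(PW_{1,\delta}))\phi_1+\int(V_{e_n,\delta}^p-V_{-e_n,\delta}^p-f_{2,\epsilon}(PW_{2,\delta}))\phi_2$ by H\"older against the $L^{\frac{q+1}q}\times L^{\frac{p+1}p}$ norm of the unprojected residual times $\|\phi\|_{L^{q+1}\times L^{p+1}}$, and absorb the bilinear piece $\int\nabla\phi_1\cdot\nabla\phi_2$ and the second-order Taylor remainders using the smallness of $\phi$ from Proposition \ref{prop3.1}. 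The paper cites Lemmas \ref{lemb11}, \ref{lemb11'}, \ref{lemb12} directly (with $\sigma=0$) for the residual bound, whereas you route the same estimate through the interior of the proof of Lemma \ref{lemR}; since that proof invokes exactly those lemmas before applying $\Pi_\epsilon^\perp$, this is an attribution difference, not a mathematical one. Your remark that the $E_{d,\epsilon}$-orthogonality does not annihilate the linear term is accurate and is indeed why both you and the paper estimate it directly rather than discarding it. One small imprecision: for the second-order remainder of $t\mapsto|t|^{s+1}$ with $s=p_\epsilon$ or $s=q_\epsilon$, the natural threshold is $s\geq1$ (not $s\geq2$), and the bound $C(|PW_{i,\delta}|^{s-1}\phi_i^2+|\phi_i|^{s+1})$ is available in the entire range $p,q>1$ — this is what the paper uses for $J_1,J_2$ — so the second branch of your case distinction is unnecessary; this does not affect the conclusion.
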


\begin{proof}
\begin{align*}
&I_\epsilon(PW_{1,\delta}+\phi_{1,d,\epsilon},PW_{2,\delta}+\phi_{2,d,\epsilon})-I_\epsilon(PW_{1,\delta},PW_{2,\delta})\\
&=\int_\Omega\nabla\phi_{1,d,\epsilon}\cdot\nabla\phi_{2,d,\epsilon}+\int_\Omega(U_{e_n,\delta}^q-U_{-e_n,\delta}^q)\phi_{1,d,\epsilon}
+(V_{e_n,\delta}^p-V_{-e_n,\delta}^p)\phi_{2,d,\epsilon}\\
&\quad-\frac1{q_\epsilon+1}\int_\Omega\Big(|PW_{1,\delta}+\phi_{1,d,\epsilon}|^{q_\epsilon+1}-|PW_{1,\delta}|^{q_\epsilon+1}\Big)\\&\quad
-\frac1{p_\epsilon+1}\int_\Omega\Big(|PW_{2,\delta}+\phi_{2,d,\epsilon}|^{p_\epsilon+1}-|PW_{2,\delta}|^{p_\epsilon+1}\Big)\\
&=o(\epsilon)+\int_\Omega(U_{e_n,\delta}^q-U_{-e_n,\delta}^q-|PW_{1,\delta}|^{q_\epsilon-1}PW_{1,\delta})\phi_{1,d,\epsilon}\\&\quad
+\int_\Omega(V_{e_n,\delta}^p-V_{-e_n,\delta}^p-|PW_{2,\delta}|^{p_\epsilon-1}PW_{2,\delta})\phi_{2,d,\epsilon}\\
&\quad-\frac1{q_\epsilon+1}\int_\Omega\Big(|PW_{1,\delta}+\phi_{1,d,\epsilon}|^{q_\epsilon+1}-|PW_{1,\delta}|^{q_\epsilon+1}-|PW_{1,\delta}|^{q_\epsilon-1}PW_{1,\delta}\phi_{1,d,\epsilon}\Big)\\&\quad
-\frac1{p_\epsilon+1}\int_\Omega\Big(|PW_{2,\delta}+\phi_{2,d,\epsilon}|^{p_\epsilon+1}-|PW_{2,\delta}|^{p_\epsilon+1}-|PW_{2,\delta}|^{p_\epsilon-1}PW_{2,\delta}\phi_{2,d,\epsilon}\Big)\\
&:=o(\epsilon)+I_1+I_2+J_1+J_2.
\end{align*}
By Proposition \ref{prop3.1}, Lemma \ref{lemb11}, Lemma \ref{lemb11'}  and Lemma \ref{lemb12} with $\sigma=0$, we get that
\begin{align*}
 |I_1|+|I_2|&\leq\|U_{e_n,\delta}^q-U_{-e_n,\delta}^q-|PW_{1,\delta}|^{q_\epsilon-1}PW_{1,\delta}\|_{L^{\frac{q+1}q}}\|\phi_{1,d,\epsilon}\|_{L^{q+1}}\\&\quad
+\|V_{e_n,\delta}^p-V_{-e_n,\delta}^p-|PW_{2,\delta}|^{p_\epsilon-1}PW_{2,\delta}\|\|_{L^{\frac{p+1}p}}\|\phi_{2,d,\epsilon}\|_{L^{p+1}}=o(\epsilon)
\end{align*}
and there exists $\theta=\theta(d,\epsilon)\in(0,1)$,
\begin{align*}
 |J_1|+|J_2|&\leq C\int_\Omega\Big(|PW_{1,\delta}+\theta\phi_{1,d,\epsilon}|^{q_\epsilon-1}\phi_{1,d,\epsilon}^2+|PW_{2,\delta}+\theta\phi_{2,d,\epsilon}|^{p_\epsilon-1}\phi_{2,d,\epsilon}^2\Big)\\
&\leq C\Big(\||PW_{1,\delta}|^{q_\epsilon-1}\|_{L^{\frac{q+1}{q-1}}}\|\phi_{1,d,\epsilon}\|_{L^{q+1}}^2+\|\phi_{1,d,\epsilon}\|_{L^{q_\epsilon+1}}^{q_\epsilon+1}\\&\qquad+
\||PW_{2,\delta}|^{p_\epsilon-1}\|_{L^{\frac{p+1}{p-1}}}\|\phi_{2,d,\epsilon}\|_{L^{p+1}}^2+\|\phi_{2,d,\epsilon}\|_{L^{p_\epsilon+1}}^{p_\epsilon+1}
\Big)=o(\epsilon).
\end{align*}

\end{proof}

\medskip

\subsection{Expansion of the leading term}
We are sufficed to expand the leading term $I_\epsilon(PW_{1,\delta},PW_{2,\delta})$.

Recall from Proposition \ref{lemexpansion} that
\begin{align*}
&PW_{1,\delta}(x)=W_{1,\delta}(x)-\delta^{-\frac n{q+1}+1}\Big(\varphi_{1,0}(\frac{e_n-x}{\delta})-\varphi_{1,0}(\frac{e_n+x}{\delta})\Big)+\zeta_{1,\delta}(x),\\
&PW_{2,\delta}(x)=W_{2,\delta}(x)-\delta^{-\frac n{p+1}+1}\Big(\varphi_{2,0}(\frac{e_n-x}{\delta})-\varphi_{2,0}(\frac{e_n+x}{\delta})\Big)+\zeta_{2,\delta}(x),
\end{align*}
where $\zeta_{1,\delta}=O(\delta^{2-\frac n{q+1}})$, $\zeta_{2,\delta}=O(\delta^{2-\frac n{p+1}})$,
$\partial_\delta\zeta_{1,\delta}=O(\delta^{1-\frac n{q+1}})$, $\partial_\delta\zeta_{2,\delta}=O(\delta^{1-\frac n{p+1}})$ as $\delta\rightarrow0$ uniformly in $\Omega$.
We have
\begin{align*}
&\int_\Omega\nabla  PW_{1,\delta}\cdot \nabla PW_{2,\delta}
=\int_\Omega PW_{1,\delta}(-\Delta)PW_{2,\delta}
=\int_\Omega PW_{1,\delta}\Big(U_{e_n,\delta}^q-U_{-e_n,\delta}^q\Big)\\
&=\int_\Omega\Big(W_{1,\delta}-\delta^{-\frac n{q+1}+1}\Big(\varphi_{1,0}(\frac{e_n-x}{\delta})-\varphi_{1,0}(\frac{e_n+x}{\delta})\Big)+\zeta_{1,\delta}(x)\Big)\Big(U_{e_n,\delta}^q-U_{-e_n,\delta}^q\Big)\\
&:=I_1-I_2+I_3,
\end{align*}
with
\begin{align*}
&I_1=\int_\Omega W_{1,\delta}\Big(U_{e_n,\delta}^q-U_{-e_n,\delta}^q\Big),\ \  I_3=\int_\Omega\zeta_{1,\delta}(x)\Big(U_{e_n,\delta}^q-U_{-e_n,\delta}^q\Big)\\
&I_2=\int_\Omega\delta^{-\frac n{q+1}+1}\Big(\varphi_{1,0}(\frac{e_n-x}{\delta})-\varphi_{1,0}(\frac{e_n+x}{\delta})\Big)\Big(U_{e_n,\delta}^q-U_{-e_n,\delta}^q\Big).
\end{align*}

\begin{Prop}\label{propnabla}
It holds that
\begin{align*}
&\int_\Omega\nabla  PW_{1,\delta}\cdot \nabla PW_{2,\delta}=\mathcal A_1-2\mathcal B_1\delta+\mathcal C_1\delta+o(\delta)\\
&=\mathcal A_2-2\mathcal B_2\delta+\mathcal C_2\delta+o(\delta)=\mathcal A_1-(\mathcal B_1+\mathcal B_2)\delta+\frac{\mathcal C_1+\mathcal C_2}{2}\delta+o(\delta).
\end{align*}
\end{Prop}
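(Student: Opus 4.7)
The plan is to transfer the Laplacian and substitute the expansion from Proposition \ref{lemexpansion}:
\begin{align*}
\int_\Omega \nabla PW_{1,\delta}\cdot \nabla PW_{2,\delta} = \int_\Omega PW_{1,\delta}\,(U_{e_n,\delta}^q - U_{-e_n,\delta}^q)\,dx = I_1 - I_2 + I_3,
\end{align*}
and then evaluate each of the three pieces separately.

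For $I_1$, the symmetry $x\mapsto -x$ of $\Omega$ together with $U_{-e_n,\delta}(x)=U_{e_n,\delta}(-x)$ reduces the expression to $2\int_\Omega U_{e_n,\delta}^{q+1} - 2\int_\Omega U_{e_n,\delta}U_{-e_n,\delta}^q$. In the first integral, rescale $y=(x-e_n)/\delta$: the image of $\Omega$ converges to the half-space $\{y_n<0\}$, while the symmetric difference (the ``sliver'' between the tangent hyperplane at $e_n$ and the rescaled spherical cap) is precisely what the definition of $\mathcal B_1$ in \eqref{notation} captures, giving the contribution $-\mathcal B_1\delta + o(\delta)$. Combined with $\int_{\{y_n<0\}} U^{q+1} = \tfrac{1}{2}\mathcal A_1$ from the radial symmetry of $U$, one obtains $2\int_\Omega U_{e_n,\delta}^{q+1} = \mathcal A_1 - 2\mathcal B_1\delta + o(\delta)$. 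The interaction term $\int_\Omega U_{e_n,\delta}U_{-e_n,\delta}^q$ is $o(\delta)$ by the sharp decay of the bubbles in Lemma \ref{lemasym'}, in both ranges of $p$ allowed by \textbf{(P)}.

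For $I_2$, the $x\mapsto -x$ symmetry collapses the four cross terms to two, and after rescaling $y=(e_n-x)/\delta$ the local piece becomes $2\delta\int_{\R^n_+}\varphi_{1,0}(y)U^q(y)\,dy + o(\delta)$. Using $U^q=-\Delta V$, $\Delta\varphi_{1,0}=0$ and Green's identity on $\R^n_+$, the radial symmetry of $V$ kills $\partial_{x_n}V|_{x_n=0}$, while the Neumann condition $\partial_{x_n}\varphi_{1,0}|_{x_n=0}=-\tfrac{|x'|}{2}U'(|x'|)$ yields
\begin{align*}
\int_{\R^n_+}\varphi_{1,0}U^q\,dy = \tfrac{1}{2}\int_{\partial\R^n_+}|x'|U'(|x'|)V(|x'|)\,dx' = -\mathcal C_1/2
\end{align*}
by the definition of $\mathcal C_1$, so $-I_2 = \mathcal C_1\delta + o(\delta)$. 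The term $I_3=\int\zeta_{1,\delta}(U_{e_n,\delta}^q - U_{-e_n,\delta}^q)$ is $o(\delta)$ by the uniform bound $|\zeta_{1,\delta}|\leq C\delta^{2-n/(q+1)}$ of Proposition \ref{lemexpansion} together with $\|U_{e_n,\delta}^q\|_{L^1(\Omega)}=O(\delta^{n/(p+1)})$: the resulting exponent $n - 2n/(q+1)$ exceeds $1$ because $q>(n+2)/(n-2)$, a consequence of $p<q$ on the critical hyperbola.

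Summing the three pieces yields the first equality. The second is obtained by transferring the Laplacian the other way, $\int_\Omega PW_{1,\delta}(-\Delta PW_{2,\delta}) = \int_\Omega PW_{2,\delta}(V_{e_n,\delta}^p - V_{-e_n,\delta}^p)$, and repeating the argument with $(U,\mathcal B_1,\mathcal C_1)$ replaced by $(V,\mathcal B_2,\mathcal C_2)$, together with $\mathcal A_1=\mathcal A_2$. The third form is simply the symmetric average of the first two. The main technical hurdle is the control of the interaction term $\int U_{e_n,\delta}U_{-e_n,\delta}^q$ in case (ii) of \textbf{(P)}, where the slower decay $U\sim r^{-((n-2)p-2)}$ from Lemma \ref{lemasym'} must be combined with the lower bound $p>p_n$ (which enforces $(p-1)n/(q+1)>1/2$, cf.\ Remark~1.2) to guarantee $o(\delta)$ decay; analogously, the sliver computation producing $\mathcal B_1\delta$ must use the sharp asymptotics of Lemma \ref{lemasym'} uniformly across the two regimes.
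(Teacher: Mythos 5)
Your decomposition $\int_\Omega\nabla PW_{1,\delta}\cdot\nabla PW_{2,\delta}=I_1-I_2+I_3$, the sliver computation for $I_1$ producing $\mathcal A_1-2\mathcal B_1\delta$, the Green's-identity argument for $I_2$ on $\R^n_+$ yielding $\mathcal C_1\delta$, the cross-term $o(\delta)$ estimates, and the derivation of the second and third forms by transferring the Laplacian and averaging all match the paper's route (via Lemmas \ref{lemc1}, \ref{lemb6}, \ref{lemb8}, \ref{lemb7}).

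The only slip is in your bound for $I_3$. Since $U_{e_n,\delta}(x)=\delta^{-n/(q+1)}U\big((x-e_n)/\delta\big)$, one has $\|U_{e_n,\delta}^q\|_{L^1(\Omega)}=\delta^{n-nq/(q+1)}\int U^q+o(1)=O(\delta^{n/(q+1)})$, not $O(\delta^{n/(p+1)})$ as you wrote (that exponent would belong to $\|V_{e_n,\delta}^p\|_{L^1}$). Combined with $\|\zeta_{1,\delta}\|_\infty=O(\delta^{2-n/(q+1)})$ this yields $|I_3|=O(\delta^2)=o(\delta)$ outright; the exponent is $2$, not $n-2n/(q+1)$, and no appeal to $q>(n+2)/(n-2)$ is needed. (Your claimed exponent happens to exceed $1$ as well, so the conclusion is not endangered, but the intermediate step and the extra hypothesis are both artifacts of the wrong $L^1$ scaling.) This is exactly the estimate the paper carries out in its treatment of $I_3$, where the factor $\delta^{n-qn/(q+1)}$ appears explicitly.
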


\begin{proof}
By Lemma \ref{lemc1} and Lemma \ref{lemb6}, we first estimate $I_1$:
\begin{align*}
 I_1&=\int_\Omega W_{1,\delta}\Big(U_{e_n,\delta}^q-U_{-e_n,\delta}^q\Big)\\
&=\int_\Omega U_{e_n,\delta}^{q+1}- U_{e_n,\delta}U_{-e_n,\delta}^q-U_{-e_n,\delta}U_{e_n,\delta}^q+ U_{-e_n,\delta}^{q+1}\\
&=\mathcal A_1-2\mathcal B_1\delta+o(\delta).
\end{align*}

We apply Lemma \ref{lemb8} and Lemma \ref{lemb7} to obtain that
\begin{align*}
 I_2&=-\mathcal C_1\delta+o(\delta).
\end{align*}

Finally for $I_3$, we have for $p>\frac n{n-2}$, 
\begin{align*}
&\int_{\Omega_+}|\zeta_{1,\delta}(x)|U_{e_n,\delta}^q\leq C\delta^{2-\frac n{q+1}}\int_{\Omega_+}\frac{\delta^{-\frac{qn}{q+1}}}{(1+\frac{|x-e_n|}\delta)^{q(n-2)}}\\
&\leq  C\delta^{2-\frac n{q+1}}\int_{\frac{\Omega_+-e_n}{\delta}}\frac{\delta^{n-\frac{qn}{q+1}}}{(1+|x|)^{q(n-2)}}=O(\delta^2)=o(\delta).
\end{align*}
On the other hand,
\begin{align*}
&\int_{\Omega\setminus\Omega_+}|\zeta_{1,\delta}(x)|U_{e_n,\delta}^q\leq C\delta^{2-\frac n{q+1}}\int_{\Omega\setminus\Omega_+}\frac{\delta^{-\frac{qn}{q+1}}}{(1+\frac{|x-e_n|}\delta)^{q(n-2)}}=O(\delta^{(q-1)(n-2)})=o(\delta).
\end{align*}

While for $p<\frac n{n-2}$, since $q((n-2)p-2)>n$,
\begin{align*}
&\int_{\Omega_+}|\zeta_{1,\delta}(x)|U_{e_n,\delta}^q\leq C\delta^{2-\frac n{q+1}}\int_{\Omega_+}\frac{\delta^{-\frac{qn}{q+1}}}{(1+\frac{|x-e_n|}\delta)^{q((n-2)p-2)}}\\
&\leq  C\delta^{2-\frac n{q+1}}\int_{\frac{\Omega_+-e_n}{\delta}}\frac{\delta^{n-\frac{qn}{q+1}}}{(1+|x|)^{q((n-2)p-2)}}=O(\delta^2)=o(\delta).
\end{align*}
On the other hand,
\begin{align*}
&\int_{\Omega\setminus\Omega_+}|\zeta_{1,\delta}(x)|U_{e_n,\delta}^q\leq C\delta^{2-\frac n{q+1}}\int_{\Omega\setminus\Omega_+}\frac{\delta^{-\frac{qn}{q+1}}}{(1+\frac{|x-e_n|}\delta)^{q((n-2)p-2)}}=o(\delta).
\end{align*}

By symmetry, we finally find
\begin{align*}
&I_3=\int_\Omega\zeta_{1,\delta}(x)\Big(U_{e_n,\delta}^q-U_{-e_n,\delta}^q\Big)=o(\delta).
\end{align*}

\end{proof}

\medskip

\begin{Prop}\label{propnonlinear}
It holds that
\begin{align}\label{1p}
\begin{split}
 &\frac1{p_\epsilon+1}\int_\Omega|PW_{2,\delta}|^{p_\epsilon+1}\\=&\frac1{p+1}\Big(\mathcal A_2-2\mathcal B_2\delta+(p+1)\mathcal C_2\delta\Big)+\Big(-\frac {n\alpha\epsilon}{(p+1)^2}\log\delta\mathcal A_2+\frac{\alpha\epsilon}{p+1}\mathcal D_2-\frac{\alpha\epsilon}{(p+1)^2}\mathcal A_2\Big)+o(\epsilon)\end{split}\end{align}
 and\begin{align}\label{1q}
\begin{split}
 &\frac1{q_\epsilon+1}\int_\Omega|PW_{1,\delta}|^{q_\epsilon+1}\\
 =&\frac1{q+1}\Big(\mathcal A_1-2\mathcal B_1\delta+(q+1)\mathcal C_1\delta\Big)+\Big(-\frac {n\alpha\epsilon}{(q+1)^2}\log\delta\mathcal A_1+\frac{\beta\epsilon}{q+1}\mathcal D_1-\frac{\beta\epsilon}{(q+1)^2}\mathcal A_1\Big)+o(\epsilon).
\end{split}\end{align}
\end{Prop}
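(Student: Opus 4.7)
The plan is to prove (\ref{1p}) and (\ref{1q}) by the same strategy; I focus on (\ref{1p}) since (\ref{1q}) follows identically upon the substitution $(V,p,\alpha)\leftrightarrow(U,q,\beta)$. Three independent sources of corrections must be isolated: (a) replacing $PW_{2,\delta}$ by $W_{2,\delta}$, which will produce the $\mathcal C_2\delta$ term; (b) localizing each bubble $V_{\pm e_n,\delta}$ to a neighborhood of its pole on $\partial\Omega$, which will produce the $\mathcal A_2$ leading term together with the $-2\mathcal B_2\delta$ curvature correction; (c) expanding in the exponent $p_\epsilon\to p$, which will produce the $\log\delta$, $\mathcal D_2$, and $\mathcal A_2/(p+1)^2$ terms of order $\epsilon$.

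For (a) I will use Proposition \ref{lemexpansion} to write $PW_{2,\delta}=W_{2,\delta}+\psi_{2,\delta}$ with
\[
\psi_{2,\delta}=-\delta^{1-\frac{n}{p+1}}\bigl(\varphi_{2,0}(\tfrac{e_n-x}{\delta})-\varphi_{2,0}(\tfrac{e_n+x}{\delta})\bigr)+\zeta_{2,\delta},
\]
and Taylor expand
\[
|PW_{2,\delta}|^{p_\epsilon+1}=|W_{2,\delta}|^{p_\epsilon+1}+(p_\epsilon+1)|W_{2,\delta}|^{p_\epsilon-1}W_{2,\delta}\psi_{2,\delta}+O\bigl(|W_{2,\delta}|^{p_\epsilon-1}\psi_{2,\delta}^{\,2}+|\psi_{2,\delta}|^{p_\epsilon+1}\bigr).
\]
The quadratic remainder will be $o(\delta)$ by the pointwise bounds of Proposition \ref{lemexpansion}. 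The first-order term is treated exactly as $I_2$ in Proposition \ref{propnabla}: applying Lemmas \ref{lemb7}--\ref{lemb8} with $V^p$ in place of $U^q$ yields a contribution of $(p_\epsilon+1)\mathcal C_2\delta+o(\delta)$, which after division by $p_\epsilon+1$ is precisely the $\mathcal C_2\delta$ term in (\ref{1p}).

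For (b), by the sign structure of $W_{2,\delta}$ and the symmetry of $\Omega$ under $x_n\mapsto -x_n$, I will reduce $\int_\Omega|W_{2,\delta}|^{p_\epsilon+1}$ to twice the contribution from $\Omega_+$. The cross term $\int_{\Omega_+}V_{e_n,\delta}^{p_\epsilon}V_{-e_n,\delta}$ is $o(\delta)$ by separation of the two bubbles (Lemma \ref{lemb6}). The dominant integral $\int_{\Omega_+}V_{e_n,\delta}^{p_\epsilon+1}$ is computed by rescaling $y=(x-e_n)/\delta$:
\[
\int_{\Omega_+}V_{e_n,\delta}^{p_\epsilon+1}\,dx=\delta^{-\frac{n\alpha\epsilon}{p+1}}\int_{(\Omega_+-e_n)/\delta}V^{p_\epsilon+1}(y)\,dy,
\]
and I split the rescaled domain as the halfspace $\{y_n<0\}$ (whose integral gives $\mathcal A_2/2$ by radial symmetry of $V$) minus the strip between $\partial\Omega$ and the tangent plane at $e_n$, which by definition produces $\delta\mathcal B_2$. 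For (c) I apply the elementary expansions
\[
\tfrac{1}{p_\epsilon+1}=\tfrac{1}{p+1}-\tfrac{\alpha\epsilon}{(p+1)^2}+O(\epsilon^2),\qquad \delta^{-\frac{n\alpha\epsilon}{p+1}}=1-\tfrac{n\alpha\epsilon}{p+1}\log\delta+O(\epsilon^2\log^2\delta),
\]
\[
V^{p_\epsilon+1}=V^{p+1}\bigl(1+\alpha\epsilon\log V+O(\epsilon^2\log^2 V)\bigr),
\]
noting that $2\int_{\{y_n<0\}}V^{p+1}\log V=\mathcal D_2$ by radial symmetry, and finally collect all contributions to reach (\ref{1p}).

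The main obstacle will be the bookkeeping of mixed remainders. Since $\delta=d\epsilon$, I must verify that terms of types $\epsilon\delta$, $\epsilon^2\log^2\delta$, $\delta^{2-n/(p+1)}$, as well as the cross products between the boundary corrections of (a)--(b) and the $\epsilon$-expansion of (c), are uniformly $o(\epsilon)$ in $d\in(\eta,1/\eta)$. In particular the integrated Taylor remainder of $V^{\alpha\epsilon}$ on the unbounded region where $|\log V|$ is large must be controlled by a weighted estimate of the form $\int V^{p+1}\log^2V<\infty$, which follows from the decay rate recorded in Lemma \ref{lemasym'}.
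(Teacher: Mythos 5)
Your proposal is correct and follows essentially the same route as the paper: expand $PW_{2,\delta}=W_{2,\delta}+\psi_{2,\delta}$ to isolate the $\mathcal C_2$ term (paper's $I_2$-type computation via Lemmas~\ref{lemb7}--\ref{lemb8}), localize to $\Omega_\pm$ and rescale to get $\mathcal A_2-2\mathcal B_2\delta$ (Lemmas~\ref{lemc1},~\ref{lemb6}), and extract the $\log\delta$, $\mathcal D_2$, and $\mathcal A_2/(p+1)^2$ contributions from the $\epsilon$-dependence of the exponent. The only organizational difference is the order of the expansions: the paper first splits $\int|PW_{2,\delta}|^{p_\epsilon+1}=\int|PW_{2,\delta}|^{p+1}+\int|PW_{2,\delta}|^{p+1}(|PW_{2,\delta}|^{\alpha\epsilon}-1)=:J_1+J_2$, computes $J_1$ exactly as in Proposition~\ref{propnabla}, and only then treats $J_2$ by replacing $PW_{2,\delta}$ with a single bubble and rescaling (since $J_2=O(\epsilon\log\epsilon)$, no projection correction is needed at that order); you instead carry the exponent $p_\epsilon$ through the rescaling and obtain the extra factor $\delta^{-n\alpha\epsilon/(p+1)}$ explicitly, expanding it at the end. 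Both produce identical bookkeeping, since the $\mathcal C_2\delta$ and $\mathcal B_2\delta$ corrections are already $O(\epsilon)$ so the discrepancy between $p_\epsilon$ and $p$ there is $o(\epsilon)$, exactly as you note in your final paragraph on mixed remainders.
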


\begin{proof}
We write
\begin{align*}
\int_\Omega|PW_{2,\delta}|^{p_\epsilon+1}=\int_\Omega|PW_{2,\delta}|^{p+1}+|PW_{2,\delta}|^{p+1}(|PW_{2,\delta}|^{\alpha\epsilon}-1):=J_1+J_2.
\end{align*}

First of all,  in view of the decay of $\varphi_{2,\delta}$ and $\zeta_{2,\delta}$ in Proposition \ref{lemexpansion}, similar arguments as in the proof of Proposition \ref{propnabla}, we have
\begin{align*}
&\int_{\Omega_+}|PW_{2,\delta}|^{p+1}=\int_{\Omega_+}|W_{2,\delta}-(W_{2,\delta}-PW_{2,\delta})|^{p+1}\\
&=\int_{\frac{\Omega_+-e_n}\delta}\Big|V(y)-\delta\Big(\varphi_{2,0}(-y)-\varphi_{2,0}(\frac{2e_n}{\delta}+y)\Big)-\zeta_{2,\delta}(\delta y+e_n)\delta^{\frac n{p+1}}\Big|^{p+1}
+o(\delta)\\
&=\int_{\frac{\Omega_+-e_n}\delta}\Big(|V(y)|^{p+1}-\delta(p+1)V(y)^p\varphi_{2,\delta}(-y)\Big)
+o(\delta)\\
&=\frac{\mathcal A_2}2-\mathcal B_2\delta+\frac{p+1}2\mathcal C_2\delta+o(\delta).
\end{align*}
Also we can obtain
\begin{align*}
&\int_{\Omega_-}|PW_{2,\delta}|^{p+1}=\frac{\mathcal A_2}2-\mathcal B_2\delta+\frac{p+1}2\mathcal C_2\delta+o(\delta).
\end{align*}
Hence it holds that
\begin{align}\label{J1}
&J_1=\mathcal A_2-2\mathcal B_2\delta+(p+1)\mathcal C_2\delta+o(\delta).
\end{align}

Secondly,  note that
\begin{align*}
J_2=\int_\Omega|PW_{2,\delta}|^{p+1}(|PW_{2,\delta}|^{\alpha\epsilon}-1)=\alpha\epsilon\int_\Omega|PW_{2,\delta}|^{p+1}\log|PW_{2,\delta}|+o(\delta).
\end{align*}
Following Lemma \ref{lemc1}, we can obtain that
\begin{align*}
&\int_{\frac{\Omega_+-e_n}{\delta}}V^{p+1}=\frac {\mathcal A_2}{2}-\mathcal B_2\delta+o(\delta),\\&
\int_{\frac{\Omega_+-e_n}{\delta}}V^{p+1}\log V=\frac12\int_{\R^n}V^{p+1}\log V+o(1)=\frac {\mathcal D_2}{2}+o(1),
\end{align*}
and then
\begin{align*}
&\alpha\epsilon\int_{\Omega_+}|PW_{2,\delta}|^{p+1}\log|PW_{2,\delta}|\\
&=\int_{\Omega_+}|W_{2,\delta}-(W_{2,\delta}-PW_{2,\delta})|^{p+1}\log|W_{2,\delta}-(W_{2,\delta}-PW_{2,\delta})|\\
&=\alpha\epsilon\int_{\Omega_+}V_{e_n,\delta}^{p+1}\log V_{e_n,\delta}+o(\epsilon)
=\alpha\epsilon\int_{\frac{\Omega_+-e_n}{\delta}}V^{p+1}\log(\delta^{-\frac n{p+1}} V)+o(\epsilon)\\
&=\alpha\epsilon\int_{\frac{\Omega_+-e_n}{\delta}}V^{p+1}\big(-\frac n{p+1}\log\delta +\log V)\big)+o(\epsilon)\\
&=-\alpha\epsilon\frac n{p+1}\log\delta\int_{\frac{\Omega_+-e_n}{\delta}}V^{p+1}+\alpha\epsilon\int_{\frac{\Omega_+-e_n}{\delta}}V^{p+1}\log V+o(\epsilon)\\
&=-\frac {n\alpha\epsilon}{2(p+1)}\log\delta\mathcal A_2+\frac{\alpha\epsilon}2\mathcal D_2+o(\delta).
\end{align*}
Similarly,
\begin{align*}
&\alpha\epsilon\int_{\Omega_-}|PW_{2,\delta}|^{p+1}\log|PW_{2,\delta}|=-\frac {n\alpha\epsilon}{2(p+1)}\log\delta\mathcal A_2+\frac{\alpha\epsilon}2\mathcal D_2+o(\delta).
\end{align*}
Hence
\begin{align}\label{J2}
J_2=-\frac {n\alpha\epsilon}{(p+1)}\log\delta\mathcal A_2+\alpha\epsilon\mathcal D_2+o(\epsilon).
\end{align}

Moreover, since $\frac1{p_\epsilon+1}=\frac1{p+1}-\frac{\alpha\epsilon}{(p+1)^2}+o(\epsilon)$, \eqref{J1} and \eqref{J2} imply that
\begin{align*}
 &\frac1{p_\epsilon+1}\int_\Omega|PW_{2,\delta}|^{p_\epsilon+1}\\
 =&\Big(\frac1{p+1}-\frac{\alpha\epsilon}{(p+1)^2}+o(\epsilon)\Big)\Big(\mathcal A_2-2\mathcal B_2\delta+(p+1)\mathcal C_2\delta-\frac {n\alpha\epsilon}{(p+1)}\log\delta\mathcal A_2+\alpha\epsilon\mathcal D_2+o(\epsilon)\Big)\\
 =&\frac1{p+1}\Big(\mathcal A_2-2\mathcal B_2\delta+(p+1)\mathcal C_2\delta\Big)+\Big(-\frac {n\alpha\epsilon}{(p+1)^2}\log\delta\mathcal A_2+\frac{\alpha\epsilon}{p+1}\mathcal D_2-\frac{\alpha\epsilon}{(p+1)^2}\mathcal A_2\Big)+o(\epsilon).
\end{align*}

Similar argument gives \eqref{1q}.

\end{proof}

\medskip

From Proposition \ref{propnabla}, Proposition \ref{propnonlinear}, \eqref{1q} and $\mathcal A_1=\mathcal A_2$, we get for the total energy that
\begin{align}\label{Jd}
\begin{split}
 &J_\epsilon(d)=\int_\Omega\nabla PW_{1,\delta}\cdot\nabla PW_{2,\delta}-\frac1{p_\epsilon+1}\int_\Omega|PW_{2,\delta}|^{p_\epsilon+1}-\frac1{q_\epsilon+1}\int_\Omega|PW_{1,\delta}|^{q_\epsilon+1}\\
 =&(1-\frac1{q+1}-\frac1{p+1})\mathcal A_1+\Big(\frac {n\alpha}{(p+1)^2}\epsilon\log\epsilon+\frac {n\beta}{(q+1)^2}\epsilon\log\epsilon\Big)\mathcal A_1
 +\mathcal G(d)\epsilon+o(\epsilon)\\
  =&\frac2n\mathcal A_1+\Big(\frac {n\alpha}{(p+1)^2}+\frac {n\beta}{(q+1)^2}\Big)\mathcal A_1\epsilon\log\epsilon
 +\mathcal G(d)\epsilon+o(\epsilon)
\end{split}\end{align}
with $\mathcal G$ defined as \eqref{G}.

\medskip

\subsection{Existence of the critical point}

\begin{Lem}
The function $\mathcal G(d)$ has a unique critical point $d^*$, which ia a global maximum.

\end{Lem}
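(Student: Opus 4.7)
The plan is to observe that $\mathcal{G}(d)$ has the structure $A + B \log d - Cd$, with $A$ a constant, and then verify that the coefficients $B$ and $C$ are strictly positive, which immediately reduces the question to an elementary one-variable calculus problem.

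First I would read off from \eqref{G} that
\[
\mathcal{G}(d) = A_* + B_* \log d - C_* d,
\]
where $A_*$ collects the $d$-independent terms,
\[
B_* = \Big(\frac{n\alpha}{(p+1)^2} + \frac{n\beta}{(q+1)^2}\Big)\mathcal{A}_1,\qquad
C_* = \Big(1-\frac{2}{p+1}\Big)\mathcal{B}_2 + \Big(1-\frac{2}{q+1}\Big)\mathcal{B}_1 + \frac{\mathcal{C}_1+\mathcal{C}_2}{2}.
\]
Since $\mathcal{A}_1 = \int_{\mathbb{R}^n} U^{q+1} > 0$ and $\alpha,\beta > 0$, we have $B_* > 0$ directly. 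For $C_*$, the factors $1 - \tfrac{2}{p+1}$ and $1 - \tfrac{2}{q+1}$ are positive because $p,q > 1$ under hypothesis \textbf{(P)}; the quantities $\mathcal{B}_1,\mathcal{B}_2$ are integrals of positive functions (the remark after \eqref{notation} records that they are $O(1)$, and strict positivity is immediate from the definition); and $\mathcal{C}_1,\mathcal{C}_2 > 0$ by definition in \eqref{notation}. Hence $C_* > 0$.

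Next I would differentiate: $\mathcal{G}'(d) = B_*/d - C_*$, so the unique zero in $(0,\infty)$ is
\[
d^* = \frac{B_*}{C_*} > 0.
\]
Since $\mathcal{G}''(d) = -B_*/d^2 < 0$ for all $d>0$, the function $\mathcal{G}$ is strictly concave on $(0,\infty)$, so $d^*$ is the unique critical point and it is a strict global maximum. The boundary behavior $\mathcal{G}(d) \to -\infty$ as $d \to 0^+$ (because of the $B_* \log d$ term) and as $d \to +\infty$ (because of the $-C_* d$ term) confirms that this maximum is attained in the interior.

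The only part that requires genuine care is the verification $C_* > 0$; once the sign of $C_*$ is pinned down, the rest is essentially a one-line computation. In particular, it is crucial to use $p > 1$ (guaranteed by \textbf{(P)}, since $p_n > 1$ in case (ii)) so that $1 - \tfrac{2}{p+1} > 0$, and likewise for $q$. After choosing $\eta \in (0,1)$ small enough that $d^* \in (\eta, 1/\eta)$, this critical point $d^*$ lies in the admissible range of Proposition~\ref{prop3.1}, which in combination with the expansion \eqref{Jd} yields a critical point of $J_\epsilon$ and hence the solution claimed in Theorem~\ref{th1}.
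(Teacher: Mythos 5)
Your proposal is correct and follows essentially the same route as the paper: identify the form $\mathcal{G}(d)=A_* + B_*\log d - C_*d$, solve $\mathcal{G}'(d)=0$ for the unique critical point $d^* = B_*/C_*$, and use the boundary behavior $\mathcal{G}\to-\infty$ at both ends to see it is a global maximum. You additionally verify the strict positivity of $B_*$ and $C_*$ (which the paper leaves as ``obvious'') and add the strict-concavity observation $\mathcal{G}''<0$; these are minor rigor improvements rather than a different argument.
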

\begin{proof}
From the definition of $\mathcal G(d)$, it is obvious that $\mathcal G(d)\rightarrow-\infty$ as $d\rightarrow0^+$ and as $d\rightarrow+\infty$.
Hence $\mathcal G(d)$ achieves a global maximum  $$d^*:=\frac{(\frac {n\alpha}{(p+1)^2}+\frac {n\beta}{(q+1)^2})\mathcal A_1}{\Big((1-\frac2{p+1})\mathcal B_2+(1-\frac2{q+1})\mathcal B_1+\frac{\mathcal C_1+\mathcal C_2}2}$$ solving
\begin{align*}
\mathcal G'(d)=
\frac nd(\frac {\alpha}{(p+1)^2}+\frac {\beta}{(q+1)^2})\mathcal A_1-\Big((1-\frac2{p+1})\mathcal B_2+(1-\frac2{q+1})\mathcal B_1+\frac{\mathcal C_1+\mathcal C_2}2\Big)=0,
\end{align*}
which
is the unique critical point of $\mathcal G(d)$.
\end{proof}

\medskip

\begin{proof}[
\textbf{Proof of Theorem \ref{th1}}]
Let $\eta\in(0,1)$ such that $d^*\in(\eta,\frac1\eta)$. Then $\mathcal G(\eta),\mathcal G(\frac1\eta)<\mathcal G(d^*)$ since $d^*$ is the global maximum of $\mathcal G$.
Moreover, given such $\eta$, from \eqref{Jd}, we find that as $\epsilon\rightarrow0$
\begin{align*}
\begin{split}
 &J_\epsilon(d)=\frac2n\mathcal A_1+\Big(\frac {n\alpha}{(p+1)^2}+\frac {n\beta}{(q+1)^2}\Big)\mathcal A_1\epsilon\log\epsilon
 +\mathcal G(d)\epsilon+o(\epsilon)
\end{split}\end{align*}
uniformly in $(\eta,\frac1\eta)$. Therefore, there exists $\epsilon_0>0$ small such that for any $\epsilon\in(0,\epsilon_0)$
$J_\epsilon(\eta),J_\epsilon(\frac1\eta)<J_\epsilon(d^*)$, which means that $J_\epsilon$ has an interior maximum in $(\eta,\frac1\eta)$, a critical point of $J_\epsilon$.

Hence we have found a critical point $(PW_{1,\delta}+\phi_{1,d,\epsilon},PW_{2,\delta}+\phi_{2,d,\epsilon})$ of $I_\epsilon$ such that $I'_\epsilon(PW_{1,\delta}+\phi_{1,d,\epsilon},PW_{2,\delta}+\phi_{2,d,\epsilon})=0$, concluding Theorem \ref{th1}.
\end{proof}

\medskip
\section*{Appendix}

\appendix

\section{Basic Estimates for Bubbles}
\renewcommand{\theequation}{A.\arabic{equation}}

Firstly, after making some minor modifications to the proof of Lemma B.3 in \cite{arxiv}, we can prove by direct calculation the following Lemma \ref{lemb3} and Lemma \ref{lemb2}.
\begin{Lem}\label{lemb3}
For $\xi\in\R^n,\delta>0$, if we set
\begin{align*}
u_{1,\delta,\xi}=\frac{\delta^{-\frac n{q+1}}}{(1+\frac{|x-\xi|^2}{\delta^2})^{\frac{n-2}2}},\ \ \ \tilde u_{1,\delta,\xi}=\frac{\delta^{-\frac n{q+1}}}{(1+\frac{|x-\xi|^2}{\delta^2})^{\frac{(n-2)p-2}2}},\ \ \
u_{2,\delta,\xi}=\frac{\delta^{-\frac n{p+1}}}{(1+\frac{|x-\xi|^2}{\delta^2})^{\frac{n-2}2}},\\
v_{1,\delta,\xi}=\frac{\delta^{1-\frac n{q+1}}}{(1+\frac{|x-\xi|^2}{\delta^2})^{\frac{n-3}2}},\ \ \
\tilde v_{1,\delta,\xi}=\frac{\delta^{1-\frac n{q+1}}}{(1+\frac{|x-\xi|^2}{\delta^2})^{\frac{(n-2)p-3}2}},\ \ \
v_{2,\delta,\xi}=\frac{\delta^{1-\frac n{p+1}}}{(1+\frac{|x-\xi|^2}{\delta^2})^{\frac{n-3}2}},
\end{align*}
then for $R>0$, as $\delta\rightarrow0$, there hold that
\begin{align*}
&\int_{B_R(\xi)}u_{1,\delta,\xi}^t=
\begin{cases}O(\delta^{\frac{tn}{p+1}}),\ &0<t<\frac n{n-2}\\
O(\delta^{n(1-\frac n{(n-2)(q+1)})}|ln\delta|),\ &t=\frac n{n-2}\\
O(\delta^{n-\frac{nt}{q+1}}),\ &t>\frac n{n-2}
\end{cases}\\
&\int_{B_R(\xi)}\tilde u_{1,\delta,\xi}^t=
\begin{cases}O(\delta^{\frac{tpn}{q+1}}),\ &0<t<\frac n{(n-2)p-2}\\
O(\delta^{n(1-\frac n{((n-2)p-2)(q+1)})}|ln\delta|),\ &t=\frac n{(n-2)p-2}\\
O(\delta^{n-\frac{nt}{q+1}}),\ &t>\frac n{(n-2)p-2}
\end{cases}\\
&\int_{B_R(\xi)}u_{2,\delta,\xi}^t=
\begin{cases}O(\delta^{\frac{tn}{q+1}}),\ &0<t<\frac n{n-2}\\
O(\delta^{n(1-\frac n{(n-2)(p+1)})}|ln\delta|),\ &t=\frac n{n-2}\\
O(\delta^{n-\frac{nt}{p+1}}),\ &t>\frac n{n-2}
\end{cases}\\
&\int_{B_R(\xi)}v_{1,\delta,\xi}^t=
\begin{cases}O(\delta^{\frac{tn}{p+1}}),\ &0<t<\frac n{n-3}\\
O(\delta^{\frac {n^2}{(n-3)(p+1)}}|ln\delta|),\ &t=\frac n{n-3}\\
O(\delta^{n-t(\frac{n}{q+1}-1)}),\ &t>\frac n{n-3}
\end{cases}\\
&\int_{B_R(\xi)}\tilde v_{1,\delta,\xi}^t=
\begin{cases}O(\delta^{\frac{tpn}{q+1}}),\ &0<t<\frac n{(n-2)p-3}\\
O(\delta^{\frac {n^2}{((n-2)p-3)(p+1)}}|ln\delta|),\ &t=\frac n{(n-2)p-3}\\
O(\delta^{n-t(\frac{n}{q+1}-1)}),\ &t>\frac n{(n-2)p-3}
\end{cases}\\
&\int_{B_R(\xi)}v_{2,\delta,\xi}^t=\begin{cases}O(\delta^{\frac{tn}{q+1}}),\ &0<t<\frac n{n-3}\\
O(\delta^{\frac {n^2}{(n-3)(q+1)}}|ln\delta|),\ &t=\frac n{n-3}\\
O(\delta^{n-t(\frac{n}{p+1}-1)}),\ &t>\frac n{n-3}.
\end{cases}
\end{align*}

\end{Lem}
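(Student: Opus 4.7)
The plan is to reduce each of the six integrals to a single scale-invariant prototype and then read off the three regimes (subcritical, critical, supercritical) from the size of the remaining integral. The common feature is that every integrand has the shape $\delta^{-A}\bigl(1+|x-\xi|^2/\delta^2\bigr)^{-B}$ for appropriate constants $A,B>0$, so one calculation handles all six kernels simultaneously and the six statements are obtained by plugging in the six pairs $(A,B)$.

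First I would perform the change of variables $y=(x-\xi)/\delta$, under which $\int_{B_R(\xi)} f^t\,dx$ becomes $\delta^{n-At}\int_{B_{R/\delta}(0)}(1+|y|^2)^{-Bt}\,dy$. The asymptotic size of the $y$-integral as $\delta\to 0$ (i.e.\ as $R/\delta\to\infty$) is standard: it is bounded independently of $\delta$ when $2Bt>n$, is of order $|\log(R/\delta)|=|\log\delta|+O(1)$ when $2Bt=n$, and is of order $(R/\delta)^{n-2Bt}$ when $2Bt<n$. Multiplying by the prefactor $\delta^{n-At}$ one arrives respectively at $O(\delta^{n-At})$, $O(\delta^{n-At}|\log\delta|)$ and $O(\delta^{(2B-A)t})$.

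Specialising to each kernel is then bookkeeping. For $u_{1,\delta,\xi}$ one has $A=n/(q+1)$ and $B=(n-2)/2$, so the critical exponent is $t=n/(n-2)$, the supercritical exponent is $n-tn/(q+1)$, and the subcritical exponent is $(n-2-n/(q+1))t$, which equals $tn/(p+1)$ by the critical hyperbola \eqref{pq}. For $\tilde u_{1,\delta,\xi}$ only $B$ changes to $((n-2)p-2)/2$, giving critical $t=n/((n-2)p-2)$ and subcritical exponent $((n-2)p-2-n/(q+1))t=tpn/(q+1)$, again by \eqref{pq}. The estimates for $u_{2,\delta,\xi}$ are obtained by swapping $p\leftrightarrow q$. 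For $v_{1,\delta,\xi}$, $\tilde v_{1,\delta,\xi}$ and $v_{2,\delta,\xi}$ the $\delta$-prefactor becomes $1-n/(q+1)$ or $1-n/(p+1)$ and $B$ is lowered by $1/2$, which shifts the critical value of $t$ from $n/(n-2)$ to $n/(n-3)$ (respectively from $n/((n-2)p-2)$ to $n/((n-2)p-3)$) and the supercritical exponent from $n-tn/(q+1)$ to $n-t(n/(q+1)-1)$, matching the displayed formulas.

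No genuine obstacle is anticipated: everything follows from the above rescaling together with the elementary estimate for $\int_{B_\rho(0)}(1+|y|^2)^{-s}\,dy$ as $\rho\to\infty$. The author in fact states that the lemma follows by minor modifications of Lemma B.3 in \cite{arxiv}, which is exactly this template; the only point requiring genuine care is the algebraic simplification of the resulting exponents, where one must systematically invoke the critical-hyperbola identity \eqref{pq} to rewrite quantities such as $(n-2)-n/(q+1)$ as $n/(p+1)$ in order to match the compact form in which the statement is presented.
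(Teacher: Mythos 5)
Your reduction to the scale-invariant prototype $\delta^{n-At}\int_{B_{R/\delta}(0)}(1+|y|^2)^{-Bt}\,dy$ followed by the three-regime estimate and the hyperbola identity $\frac{n}{p+1}+\frac{n}{q+1}=n-2$ is exactly the template the paper invokes (minor modifications of Lemma~B.3 in \cite{arxiv}), so the method is the same and is correct.

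One caution about the assertion that the output ``matches the displayed formulas'': if you actually carry out the algebra at the critical exponent for $\tilde v_{1,\delta,\xi}$, where $A=\frac{n}{q+1}-1$, $2B=(n-2)p-3$, and $t=\frac{n}{(n-2)p-3}$, you get
\[
n-At=\frac{n}{(n-2)p-3}\Bigl[(n-2)p-2-\tfrac{n}{q+1}\Bigr]
=\frac{n^{2}p}{\bigl((n-2)p-3\bigr)(q+1)},
\]
using $(n-2)p-2=\frac{n(p+1)}{q+1}$. This is \emph{not} the exponent $\frac{n^{2}}{((n-2)p-3)(p+1)}$ printed in the lemma, and the two are equal on the critical hyperbola only at the boundary value $p=\frac{n}{n-2}$ (for instance $n=6$, $p=q=2$ gives $\tfrac{24}{5}$ versus $\tfrac{12}{5}$). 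So the printed entry for $\tilde v_{1}$ at $t=\frac{n}{(n-2)p-3}$ appears to be a typo in the statement; your method produces the correct exponent, but you should not claim agreement without doing the algebra, since a careful pass flags this entry rather than confirms it.
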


Recall that $$f_{2,\epsilon}(t)=|t|^{p+\alpha\epsilon-1}t,\ \ \ f_{1,\epsilon}(t)=|t|^{q+\beta\epsilon-1}t.$$
\begin{Lem}\label{lemb2}
For $t\in\R$, it holds that
\begin{align*}
f_{1,\epsilon}(t)&=|t|^{q-1}t+\beta\epsilon|t|^{q-1}t\log|t|+\epsilon^2\xi_{1,\epsilon}(t),\\
f'_{1,\epsilon}(t)&=q|t|^{q-1}+\beta\epsilon(|t|^{q-1}+q|t|^{q-1}\log|t|)+\epsilon^2\eta_{1,\epsilon}(t)
\end{align*}
and
\begin{align*}
f_{2,\epsilon}(t)&=|t|^{p-1}t+\alpha\epsilon|t|^{p-1}t\log|t|+\epsilon^2\xi_{2,\epsilon}(t),\\
f'_{1,\epsilon}(t)&=p|t|^{p-1}+\beta\epsilon(|t|^{p-1}+p|t|^{p-1}\log|t|)+\epsilon^2\eta_{2,\epsilon}(t),
\end{align*}
where
\begin{align*}
&|\xi_{1,\epsilon}(t)|\leq\frac12(|t|^q+|t|^{q+\beta\epsilon})(\log|t|)^2,\\
&|\eta_{1,\epsilon}(t)|\leq2(q+1)(|t|^{q-1}+|t|^{q-1+\beta\epsilon})(\log|t|+(\log|t|)^2)\\
&|\xi_{2,\epsilon}(t)|\leq\frac12(|t|^p+|t|^{p+\beta\epsilon})(\log|t|)^2,\\
&|\eta_{2,\epsilon}(t)|\leq2(p+1)(|t|^{p-1}+|t|^{p-1+\beta\epsilon})(\log|t|+(\log|t|)^2).
\end{align*}

\end{Lem}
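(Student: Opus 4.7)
\medskip

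\noindent\textbf{Proof proposal.} The plan is to view each $f_{i,\epsilon}$ as an explicitly differentiable function of $\epsilon$ at fixed $t\neq 0$, expand it in $\epsilon$ via Taylor's theorem with Lagrange remainder, and then read off the pointwise bounds on the remainders by elementary manipulations.

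First I would write, for $t\neq 0$,
\[
f_{1,\epsilon}(t) = |t|^{q-1}t\cdot|t|^{\beta\epsilon} = |t|^{q-1}t\cdot e^{\beta\epsilon\log|t|},
\]
and similarly $f_{2,\epsilon}(t) = |t|^{p-1}t\cdot e^{\alpha\epsilon\log|t|}$. Applying the second order Taylor expansion of the exponential at $0$ yields, for some $\theta = \theta(t,\epsilon)\in(0,1)$,
\[
e^{\beta\epsilon\log|t|} = 1 + \beta\epsilon\log|t| + \tfrac{1}{2}\beta^2\epsilon^2(\log|t|)^2 e^{\theta\beta\epsilon\log|t|}.
\]
Multiplying by $|t|^{q-1}t$ gives the claimed expansion of $f_{1,\epsilon}(t)$ with remainder
\[
\xi_{1,\epsilon}(t) = \tfrac{1}{2}\beta^2(\log|t|)^2|t|^{q-1}t\cdot|t|^{\theta\beta\epsilon}.
\]
The pointwise bound then follows from the elementary monotonicity inequality $|t|^{\theta\beta\epsilon}\leq\max\{1,|t|^{\beta\epsilon}\}\leq 1+|t|^{\beta\epsilon}$, which yields
\[
|\xi_{1,\epsilon}(t)| \leq \tfrac{1}{2}\beta^2(\log|t|)^2\bigl(|t|^q + |t|^{q+\beta\epsilon}\bigr),
\]
matching the stated inequality (absorbing the factor $\beta^2$ into the constant, or treating $\beta$ as fixed).

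For the derivative identity I would apply the same Taylor procedure to $h(\epsilon):=(q+\beta\epsilon)|t|^{q-1}e^{\beta\epsilon\log|t|}$, whose values at $\epsilon=0$ are $h(0) = q|t|^{q-1}$ and $h'(0) = \beta\bigl(|t|^{q-1} + q|t|^{q-1}\log|t|\bigr)$, reproducing the leading terms. The second derivative is
\[
h''(s) = 2\beta^2|t|^{q-1}|t|^{\beta s}\log|t| + \beta^2(q+\beta s)|t|^{q-1}|t|^{\beta s}(\log|t|)^2,
\]
so $\eta_{1,\epsilon}(t) = \tfrac{1}{2}h''(\theta\epsilon)$ by Lagrange remainder, and the claimed bound $|\eta_{1,\epsilon}(t)|\leq 2(q+1)(|t|^{q-1}+|t|^{q-1+\beta\epsilon})\bigl(|\log|t||+(\log|t|)^2\bigr)$ follows from the same monotonicity inequality together with $q+\beta\theta\epsilon\leq q+1$ for $\epsilon$ small. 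The expansions for $f_{2,\epsilon}$ and $f_{2,\epsilon}'$ are obtained by an identical argument with $(p,\alpha)$ replacing $(q,\beta)$, and the case $t=0$ is handled by continuity under the standard convention $0^s|\log 0|^k = 0$ for $s>0$. I do not expect any genuine obstacle here; the only point requiring care is the uniform control of the remainder for both $|t|\leq 1$ and $|t|\geq 1$, which is precisely why the bound is split as $|t|^q+|t|^{q+\beta\epsilon}$ to dominate $|t|^{q+\theta\beta\epsilon}$ in either regime.
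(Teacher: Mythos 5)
Your argument is correct and is exactly the ``direct calculation'' the paper alludes to without writing out (the paper only points to Lemma B.3 of \cite{arxiv}): a second-order Taylor expansion of $s\mapsto e^{s\log|t|}$ with Lagrange remainder, followed by the monotonicity bound $|t|^{\theta\beta\epsilon}\leq\max\{1,|t|^{\beta\epsilon}\}\leq 1+|t|^{\beta\epsilon}$, is the natural proof. The only discrepancy is the extra factors $\beta^2$ (resp.\ $\alpha^2$) in your remainder bounds compared to the constants in the lemma's statement, which, as you note, is immaterial since $\alpha,\beta$ are fixed and the lemma is used only to extract the $O(\epsilon^{1-\gamma})$ rates.
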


\medskip
Next, we need other tools when carrying out the reduction method.
\begin{Lem}\label{lemb11}
For $\sigma\geq0$ small, let $\eta\in(0,1)$, $d\in(\eta,\frac1\eta)$ and $\delta=d\epsilon$. Then for $p>\frac n{n-2}$, for every $\gamma\in(0,1)$, it holds that
\begin{align}\label{b111}
\begin{split}
&\|f_{1,0}(PW_{1,\delta})-f_{1,0}(W_{1,\delta})\|_{L^{\frac{(q+1)(1+\sigma)}{q}}}=O(\delta^{1-\gamma}),\\
&\|f_{2,0}(PW_{2,\delta})-f_{2,0}(W_{2,\delta})\|_{L^{\frac{(p+1)(1+\sigma)}{p}}}=\begin{cases}O(\delta^{1-\gamma}), &n\geq5\ or\ n=4\ and\ p+1\geq4\\
O(\delta^{\frac4{q+1}-\gamma}),&n=4, p+1<4.
\end{cases}
\end{split}
\end{align}
Moreover,
\begin{align}\label{b11f}
\begin{split}
&\|f_{2,0}(V_{e_n,\delta}-V_{-e_n,\delta})-f_{2,0}(V_{e_n,\delta})+f_{2,0}(V_{-e_n,\delta})\|_{L^{\frac{(p+1)(1+\sigma)}p}}\\
&=\begin{cases}
O(\delta^{1-\gamma}),\ &p-\frac1p\geq\frac n{n-2},\sigma\geq0\\
O\Big(\delta^{\frac{(p-1)}{q+1}n}\Big),\ &p-\frac1p<\frac n{n-2},\sigma\geq0,
\end{cases}\\
&\|f_{1,0}(U_{e_n,\delta}-U_{-e_n,\delta})-f_{1,0}(U_{e_n,\delta})+f_{1,0}(U_{-e_n,\delta})\|_{L^{\frac{(q+1)(1+\sigma)}q}}=
O(\delta^{1-\gamma}).
\end{split}
\end{align}

\end{Lem}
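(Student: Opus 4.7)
The strategy is to combine the pointwise estimates from Proposition \ref{lemexpansion} and Lemma \ref{lemasym'} with the integral computations of Lemma \ref{lemb3}. Writing $\psi_{i,\delta}:=PW_{i,\delta}-W_{i,\delta}$ for $i=1,2$, Proposition \ref{lemexpansion} gives
\[
|\psi_{1,\delta}|\leq C(v_{1,\delta,e_n}+v_{1,\delta,-e_n})\ \text{if}\ p>\tfrac{n}{n-2},\qquad |\psi_{1,\delta}|\leq C(\tilde v_{1,\delta,e_n}+\tilde v_{1,\delta,-e_n})\ \text{if}\ p<\tfrac{n}{n-2},
\]
and $|\psi_{2,\delta}|\leq C(v_{2,\delta,e_n}+v_{2,\delta,-e_n})$. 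Lemma \ref{lemasym'} supplies the analogous pointwise bounds $|W_{1,\delta}|\lesssim u_{1,\delta,e_n}+u_{1,\delta,-e_n}$ (or the $\tilde u$-variant when $p<n/(n-2)$) and $|W_{2,\delta}|\lesssim v_{2,\delta,e_n}+v_{2,\delta,-e_n}$ after scaling.

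For the first pair of estimates \eqref{b111}, I apply the elementary inequality
\[
\bigl||a+b|^{m-1}(a+b)-|a|^{m-1}a\bigr|\leq\begin{cases} C(|a|^{m-1}|b|+|b|^m), & m\geq 2,\\ C|b|^m, & 1<m<2,\end{cases}
\]
with $m=q$ (resp. $m=p$), $a=W_{1,\delta}$, $b=\psi_{1,\delta}$ (resp. $a=W_{2,\delta}$, $b=\psi_{2,\delta}$), then raise to the power $(q+1)(1+\sigma)/q$ (resp. $(p+1)(1+\sigma)/p$) and integrate using H\"older's inequality together with Lemma \ref{lemb3}. The cross term $|W_{i,\delta}|^{m-1}|\psi_{i,\delta}|$ is the main contribution; its integral reduces, up to the small parameter $\sigma$, to products of the elementary integrals $\int u_{i,\delta,\xi}^{s}$ and $\int v_{i,\delta,\xi}^{t}$, which by Lemma \ref{lemb3} collapse to a rate of $\delta^{1-\gamma}$, the $\gamma$ absorbing the logarithmic loss precisely when one of the integrals is borderline. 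The pure-power term $|\psi_{i,\delta}|^m$ is controlled in the same way and is never dominant. The only case where this procedure fails to reach the full rate $\delta^{1-\gamma}$ is $n=4$ together with $p+1<4$ on the $V$-side: here the integral of $v_{2,\delta,\xi}^{p(p+1)(1+\sigma)/p}$ falls just below the critical threshold of Lemma \ref{lemb3}, and the resulting decay caps at $\delta^{pn/(q+1)}=\delta^{4/(q+1)-\gamma}$, which accounts for the dichotomy in the statement.

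For the interaction estimate \eqref{b11f} I split $\Omega=\Omega_+\cup\Omega_-\cup\Omega_c$ as in \eqref{omegasplit} and exploit the symmetry $x\mapsto-x$ to reduce to $\Omega_+\cup\Omega_c$. On $\Omega_+$ the bubble centred at $-e_n$ satisfies $V_{-e_n,\delta}=O(\delta^{n-2-n/(p+1)})$ uniformly, so I expand
\[
\bigl|f_{2,0}(V_{e_n,\delta}-V_{-e_n,\delta})-f_{2,0}(V_{e_n,\delta})+f_{2,0}(V_{-e_n,\delta})\bigr|\leq C\bigl(V_{e_n,\delta}^{p-1}V_{-e_n,\delta}+V_{-e_n,\delta}^{p-1}V_{e_n,\delta}\bigr)
\]
for $p\geq 2$, with the appropriate $\min(\cdot,\cdot)$ modification when $1<p<2$. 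The leading term $V_{e_n,\delta}^{p-1}V_{-e_n,\delta}$, raised to $(p+1)(1+\sigma)/p$ and integrated, factors as $\bigl(V_{-e_n,\delta}$ evaluated near $e_n\bigr)^{(p+1)(1+\sigma)/p}$ times $\int V_{e_n,\delta}^{(p-1)(p+1)(1+\sigma)/p}$; by Lemma \ref{lemb3} the latter integral is $O(1)$ exactly when $(p-1)(p+1)(n-2)/p\geq n$, i.e. $p-1/p\geq n/(n-2)$, and is $O(\delta^{n-(p-1)(p+1)n/(p(q+1))})$ otherwise. Combined with the uniform smallness of $V_{-e_n,\delta}$ on $\Omega_+$ this yields the two branches $\delta^{1-\gamma}$ and $\delta^{(p-1)n/(q+1)}$ stated. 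On $\Omega_c$, both $V_{\pm e_n,\delta}$ are bounded by $C\delta^{n-2-n/(p+1)}$, and the contribution is negligible. The companion estimate on the $U$-side proceeds identically; since $q>n/(n-2)$ always, the corresponding integral is uniformly finite and the single rate $\delta^{1-\gamma}$ emerges without case-split.

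The main obstacle is the bookkeeping at the borderline integrability thresholds of Lemma \ref{lemb3}. One has to check that (i) in the regime $p<n/(n-2)$ the faster-decaying $\tilde v$-bound on $\psi_{1,\delta}$ still yields $\delta^{1-\gamma}$ (it does, since the critical exponent shifts from $n/(n-3)$ to $n/((n-2)p-3)$ in tandem); (ii) the $\sigma>0$ perturbation of the $L^t$-exponent does not push any borderline integral across the threshold, which is ensured by choosing $\sigma$ (and $\gamma$) sufficiently small; and (iii) the genuine dimensional obstruction $n=4$, $p+1<4$ on the $V$-side of \eqref{b111} is reported faithfully in the statement rather than hidden by a non-uniform $\gamma$.
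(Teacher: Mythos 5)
Your proposal follows exactly the paper's route: the elementary power-difference inequality, H\"older's inequality combined with the scaling identities of Lemma~\ref{lemb3}, and the decomposition $\Omega=\Omega_+\cup\Omega_-\cup\Omega_c$ together with the uniform bound $V_{-e_n,\delta}=O(\delta^{n/(q+1)})$ on $\Omega_+$ for the interaction estimates. The overall structure matches step by step.

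There are two small calibration slips. First, in the $n=4$, $p+1<4$ branch you write $\delta^{pn/(q+1)}=\delta^{4/(q+1)-\gamma}$; the correct exponent coming out of the computation is $\delta^{n/(q+1)}=\delta^{4/(q+1)}$ (no extra factor of $p$), so the identification is a typo but the stated rate is right.

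Second, and more substantively, the justification for the $U$-side of \eqref{b11f} is not quite correct as stated. You write that ``since $q>n/(n-2)$ always, the corresponding integral is uniformly finite,'' but the integral in question is $\int U_{e_n,\delta}^{(q-1)(q+1)(1+\sigma)/q}$ and, by Lemma~\ref{lemb3}, uniform boundedness (i.e.\ being in the fast-decay regime) requires the \emph{stronger} condition $q-\frac1q\geq\frac{n}{n-2}$, not merely $q>\frac{n}{n-2}$. Under the paper's hypotheses, $q\in\big(\frac{n+2}{n-2},\frac{n^2+2n-4}{(n-2)^2}\big)$, and for $n\geq 7$ (say $n=10$, where $q\in(1.5,1.8125)$ but $q_*\approx 1.80$) the bulk of this range has $q-\frac1q<\frac{n}{n-2}$, so the integral genuinely diverges as $\delta\to 0$. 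The conclusion $O(\delta^{1-\gamma})$ nonetheless survives because, in that sub-case, the same bookkeeping as on the $V$-side yields a rate $\delta^{\frac{(q-1)n}{p+1}}$, and the paper checks via \eqref{q} that $\frac{(q-1)n}{p+1}>1$ for $n\geq 5$ (with the $n=4$ borderline handled separately, where in fact $q-\frac1q>\frac{n}{n-2}$ always holds). So your result is correct, but the ``no case split'' shortcut for the $U$-side needs the extra argument that the sub-threshold rate is still at least $1$, which you omitted.
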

\begin{Rem}\label{remp}
Firstly, we see that $p\geq\frac{n+\sqrt2}{n-2}$ ensures $\frac{(p-1)}{q+1}n>1$.
Next, if $p\in(\frac n{n-2},\frac{n+\sqrt2}{n-2})$, it may hold that $\frac{(p-1)}{q+1}n<1$. But $p>\frac n{n-2}$
implies $\frac{(p-1)}{q+1}n>\frac12$ and $\frac4{q+1}>\frac12$.
Hence, for some small $\gamma'\in(0,\frac12)$, we actually have
\begin{align}\label{f2'}
\begin{split}
&\|f_{2,0}(PW_{2,\delta})-f_{2,0}(W_{2,\delta})\|_{L^{\frac{(p+1)(1+\sigma)}{p}}}+\|f_{2,0}(V_{e_n,\delta}-V_{-e_n,\delta})-f_{2,0}(V_{e_n,\delta})+f_{2,0}(V_{-e_n,\delta})\|_{L^{\frac{(p+1)(1+\sigma)}p}}\\
&=O(\delta^{\frac12+\gamma'}).
\end{split}
\end{align}
\end{Rem}

\begin{proof}[\bf Proof of Lemma \ref{lemb11}]
For $\sigma\geq0$, by H\"older inequalities, we have
\begin{align*}
\begin{split}
&\int_\Omega|f_{1,0}(PW_{1,\delta})-f_{1,0}(W_{1,\delta})|^{\frac{(q+1)(1+\sigma)}q}
=\int_\Omega\Big||PW_{1,\delta}|^{q-1} PW_{1,\delta}-|W_{1,\delta}|^{q-1}W_{1,\delta}\Big|^{\frac{(q+1)(1+\sigma)}q}\\
&\leq C\int_\Omega \Big( |W_{1,\delta}|^{q-1}|PW_{1,\delta}-W_{1,\delta}|+|PW_{1,\delta}-W_{1,\delta}|^q\Big)^{\frac{(q+1)(1+\sigma)}q}\\
&\leq C\int_\Omega |W_{1,\delta}|^{\frac{(q-1)(q+1)(1+\sigma)}q}|PW_{1,\delta}-W_{1,\delta}|^{\frac{(q+1)(1+\sigma)}q}
+|PW_{1,\delta}-W_{1,\delta}|^{(q+1)(1+\sigma)}\\
&\leq C\Big(\int_\Omega |W_{1,\delta}|^{(q+1)(1+\sigma)}\Big)^{\frac{q-1}q}
\Big(\int_\Omega|PW_{1,\delta}-W_{1,\delta}|^{(q+1)(1+\sigma)}\Big)^{\frac1q}
+\int_\Omega|PW_{1,\delta}-W_{1,\delta}|^{(q+1)(1+\sigma)}
\end{split}
\end{align*}
and similarly,
\begin{align}\label{b111}
\begin{split}
&\int_\Omega|f_{2,0}(PW_{2,\delta})-f_{2,0}(W_{2,\delta})|^{\frac{(p+1)(1+\sigma)}p}\\
&\leq C\Big(\int_\Omega |W_{2,\delta}|^{(p+1)(1+\sigma)}\Big)^{\frac{p-1}p}
\Big(\int_\Omega|PW_{2,\delta}-W_{2,\delta}|^{(p+1)(1+\sigma)}\Big)^{\frac1p}
+\int_\Omega|PW_{2,\delta}-W_{2,\delta}|^{(p+1)(1+\sigma)}.
\end{split}
\end{align}

Since $p>\frac n{n-2}$, when $n\geq5$, $p+1>\frac{2n-2}{n-2}>\frac n{n-3}$ and so for $\sigma\geq0$
\begin{align}\label{b112}
\begin{split}
&\Big(\int_\Omega|PW_{2,\delta}-W_{2,\delta}|^{(p+1)(1+\sigma)}\Big)^{\frac1p}
=
O(\delta^{\frac{(p+1)(1+\sigma)-n\sigma}p}).
\end{split}
\end{align}
For $n=4$ ($\frac n{n-3}=4$) and $\sigma=0$, we have
\begin{align}\label{b113}
\begin{split}
&\int_\Omega|PW_{2,\delta}-W_{2,\delta}|^{p+1}
=\begin{cases}
O(\delta^{\frac{4(p+1)}{q+1}}), &p+1<4\\
O(\delta^{4}|ln\delta|), &p+1=4\\
O(\delta^{4}), &p+1>4.
\end{cases}
\end{split}
\end{align}
While for $n=4$ and $\sigma>0$ small,
\begin{align}\label{b114}
\begin{split}
&\int_\Omega|PW_{2,\delta}-W_{2,\delta}|^{(p+1)(1+\sigma)}
=\begin{cases}
O(\delta^{\frac{4(p+1)(1+\sigma)}{q+1}}), &p+1<4\\
O(\delta^{(p+1)(1+\sigma)-\sigma n}), &p+1\geq4.
\end{cases}
\end{split}
\end{align}

On the other hand, since $p>\frac 2{n-2}$ implies $p+1>\frac n{n-2}$, then we have
\begin{align}\label{b115}
\begin{split}
&\int_\Omega |W_{2,\delta}|^{(p+1)(1+\sigma)}=
O(\delta^{-n\sigma}).
\end{split}
\end{align}

Combining \eqref{b112}-\eqref{b115}, we find that for $n\geq5$
\begin{align*}
\begin{split}
&\|f_{2,0}(PW_{2,\delta})-f_{2,0}(W_{2,\delta})\|_{L^{\frac{(p+1)(1+\sigma)}{p}}}=O(\delta^{1-\frac{pn\sigma}{(p+1)(1+\sigma)}});
\end{split}
\end{align*}
While for $n=4$
\begin{align*}
\begin{split}
&\|f_{2,0}(PW_{2,\delta})-f_{2,0}(W_{2,\delta})\|_{L^{\frac{(p+1)(1+\sigma)}{p}}}=\begin{cases}
O(\delta^{\frac4{q+1}-\frac{4\sigma}{(p+1)(1+\sigma)}}), &p+1<4\\
O(\delta^{1-\frac{4p\sigma}{(p+1)(1+\sigma)}}), &p+1\geq4,\ \sigma>0,\ or\ p+1>4\\
O(\delta^{1-\gamma}), &p+1=4,\ \sigma=0,\ n=4.
\end{cases}
\end{split}
\end{align*}

The case of
\begin{align*}
&\|f_{1,0}(PW_{1,\delta})-f_{1,0}(W_{1,\delta})\|_{L^{\frac{(q+1)(1+\sigma)}{q}}}=O(\delta^{1-\gamma})
\end{align*}
can be obtained by the same (more simpler) method.

\medskip

Now using \eqref{omegasplit}, we estimate that
\begin{align}\label{+}
\begin{split}
&\|f_{2,0}(V_{e_n,\delta}-V_{-e_n,\delta})-f_{2,0}(V_{e_n,\delta})+f_{2,0}(V_{-e_n,\delta})\|^{\frac{(p+1)(1+\sigma)}p}_{L^{\frac{(p+1)(1+\sigma)}p}(\Omega_+)}\\
&\leq\int_{\Omega_+}\Big||W_{2,\delta}|^{p-1}W_{2,\delta}-V_{e_n,\delta}^p+V_{-e_n,\delta}^p\Big|^{\frac{(p+1)(1+\sigma)}p}\\
&\leq C\int_{\Omega_+}\Big||W_{2,\delta}|^{p-1}W_{2,\delta}-V_{e_n,\delta}^p\Big|^{\frac{(p+1)(1+\sigma)}p}
+C\int_{\Omega_+}\Big|V_{-e_n,\delta}^p\Big|^{\frac{(p+1)(1+\sigma)}p}\\
&\leq C\int_{\Omega_+}\Big||V_{e_n,\delta}|^{p-1}V_{-e_n,\delta}+V_{-e_n,\delta}^p\Big|^{\frac{(p+1)(1+\sigma)}p}
+O\Big(\delta^{(pn-2(p+1))(1+\sigma)}\Big)\\
&\leq C\delta^{\frac{n}{q+1}\frac{p+1}{p}(1+\sigma)}\int_{\Omega_+}|V_{e_n,\delta}|^{\frac{(p-1)(p+1)(1+\sigma)}p}
+O\Big(\delta^{(pn-2(p+1))(1+\sigma)}\Big)\\
&=\begin{cases}
O\Big(\delta^{n-\frac{(p-1)(1+\sigma)}{p}n}\Big),\ &p-\frac1p>\frac n{n-2},\sigma\geq0\ \ or\ \ p-\frac1p\geq\frac n{n-2},\sigma>0\\
O\Big(\delta^{\frac{n}{p}}|\ln\delta|\Big),\ &p-\frac1p=\frac n{n-2},\sigma=0\\
O\Big(\delta^{\frac{(p-1)(p+1)(1+\sigma)}{p(q+1)}n}\Big),\ &p-\frac1p<\frac n{n-2},\sigma\geq0\ small.
\end{cases}
\end{split}
\end{align}
By exchanging the roles of $e_n$ and $-e_n$, we obtain a similar estimate for $$\|f_{2,0}(V_{e_n,\delta}-V_{-e_n,\delta})-f_{2,0}(V_{e_n,\delta})+f_{2,0}(V_{-e_n,\delta})\|_{L^{\frac{(p+1)(1+\sigma)}p}(\Omega_-)}.$$
On the other hand, note that on $\Omega\setminus(\Omega_+\cup\Omega_-)$, $|x-e_n|,|x+e_n|\geq\frac12$, so
\begin{align}\label{+-}
\begin{split}
& \int_{\Omega\setminus(\Omega_+\cup\Omega_-)}\Big||W_{2,\delta}|^{p-1}W_{2,\delta}-V_{e_n,\delta}^p-V_{-e_n,\delta}^p\Big|^{\frac{(p+1)(1+\sigma)}p}\\
&\leq C\int_{\Omega\setminus(\Omega_+\cup\Omega_-)}V_{e_n,\delta}^{(p+1)(1+\sigma)}
+C\int_{\Omega\setminus(\Omega_+\cup\Omega_-)} V_{-e_n,\delta}^{(p+1)(1+\sigma)}\\
&=O\Big(\delta^{(pn-2(p+1))(1+\sigma)}\Big).
\end{split}
\end{align}
Finally, from \eqref{+}-\eqref{+-}, we obtain \eqref{b112}.
\begin{align}\label{b11f2}
\begin{split}
&\|f_{2,0}(V_{e_n,\delta}-V_{-e_n,\delta})-f_{2,0}(V_{e_n,\delta})+f_{2,0}(V_{-e_n,\delta})\|_{L^{\frac{(p+1)(1+\sigma)}p}}\\
&=\begin{cases}
O\Big(\delta^{\frac{p}{(p-1)(1+\sigma)}n-n}\Big),\ &p-\frac1p>\frac n{n-2},\sigma\geq0\ \ or\ \ p-\frac1p\geq\frac n{n-2},\sigma>0\\
O\Big(\delta^{\frac{n}{p+1}}|\ln\delta|\Big),\ &p-\frac1p=\frac n{n-2},\sigma=0\\
O\Big(\delta^{\frac{(p-1)}{q+1}n}\Big),\ &p-\frac1p<\frac n{n-2},\sigma\geq0.
\end{cases}
\end{split}
\end{align}

Similarly,
\begin{align}\label{b11f1}
\begin{split}
&\|f_{1,0}(U_{e_n,\delta}-U_{-e_n,\delta})-f_{1,0}(U_{e_n,\delta})+f_{1,0}(U_{-e_n,\delta})\|_{L^{\frac{(q+1)(1+\sigma)}q}}\\
&=\begin{cases}
O\Big(\delta^{\frac{q}{(q-1)(1+\sigma)}n-n}\Big),\ &q-\frac1q>\frac n{n-2},\sigma\geq0\ \ or\ \ q-\frac1q\geq\frac n{n-2},\sigma>0\\
O\Big(\delta^{\frac{n}{q+1}}|\ln\delta|\Big),\ &q-\frac1q=\frac n{n-2},\sigma=0\\
O\Big(\delta^{\frac{(q-1)}{p+1}n}\Big),\ &q-\frac1q<\frac n{n-2},\sigma\geq0.
\end{cases}
\end{split}
\end{align}

Note that  $p\in(\frac n{n-2},\frac{n+2}{n-2})$ and $\frac1{p+1}+\frac1{q+1}=\frac{n-2}n$ imply that
\begin{align}\label{q}
 q\in\Big(\frac{n+2}{n-2},\frac{n^2+2n-4}{(n-2)^2}\Big)
 \end{align}
which implies that 
 $\frac n{q+1}>\frac{(n-2)^2}{2(n-1)}\geq1$  and $\frac{(q-1)}{p+1}n>1$ for $n\geq5$;
While in the case $n=4$ and $q-\frac1q=\frac n{n-2}=2$, we have $\frac n{q+1}=\frac4{2+\sqrt2}>1$.
Moreover, note that for $n\geq4$, $\frac n{p-1}>\frac n{p+1}>\frac{n-2}{2}\geq1$.

To sum up, we in fact have proved that, if $\sigma>0$ is sufficiently small, for any $\gamma>0$,
\begin{align*}
\begin{split}
&\|f_{2,0}(V_{e_n,\delta}-V_{-e_n,\delta})-f_{2,0}(V_{e_n,\delta})+f_{2,0}(V_{-e_n,\delta})\|_{L^{\frac{(p+1)(1+\sigma)}p}}\\
&=\begin{cases}
O\Big(\delta^{\frac{p}{(p-1)(1+\sigma)}n-n}\Big),\ &p-\frac1p>\frac n{n-2},\sigma\geq0\ \ or\ \ p-\frac1p\geq\frac n{n-2},\sigma>0\\
O\Big(\delta^{\frac{n}{p+1}}|\ln\delta|\Big),\ &p-\frac1p=\frac n{n-2},\sigma=0\\
O\Big(\delta^{\frac{(p-1)}{q+1}n}\Big),\ &p-\frac1p<\frac n{n-2},\sigma\geq0.
\end{cases}\\
&=\begin{cases}
O(\delta^{1-\gamma}),\ &p-\frac1p\geq\frac n{n-2},\sigma\geq0\\
O\Big(\delta^{\frac{(p-1)}{q+1}n}\Big),\ &p-\frac1p<\frac n{n-2},\sigma\geq0.
\end{cases}\\
&\|f_{1,0}(U_{e_n,\delta}-U_{-e_n,\delta})-f_{1,0}(U_{e_n,\delta})+f_{1,0}(U_{-e_n,\delta})\|_{L^{\frac{(q+1)(1+\sigma)}q}}=O(\delta^{1-\gamma}),
\end{split}
\end{align*}
concluding the proof of \eqref{b11f}.

\end{proof}

\medskip

\begin{Lem}\label{lemb11'}
For $\sigma\geq0$ small, let $\eta\in(0,1)$, $d\in(\eta,\frac1\eta)$ and $\delta=d\epsilon$. Then for $p<\frac n{n-2}$, it holds that
\begin{align}\label{b111'}
\begin{split}
&\|f_{1,0}(PW_{1,\delta})-f_{1,0}(W_{1,\delta})\|_{L^{\frac{(q+1)(1+\sigma)}{q}}}=\begin{cases}O(\delta^{1-\frac{qn\sigma}{(q+1)(1+\sigma)}})&if\ q+1\geq \frac n{p(n-2)-3},\\
O(\delta^{\frac{np}{q+1}}) &if\ q+1<\frac n{p(n-2)-3},
\end{cases}\\
&\|f_{2,0}(PW_{2,\delta})-f_{2,0}(W_{2,\delta})\|_{L^{\frac{(p+1)(1+\sigma)}{p}}}=\begin{cases}O(\delta^{1-\frac{pn\sigma}{(p+1)(1+\sigma)}})&if\ p+1\geq \frac n{n-3},\\
O(\delta^{\frac n{q+1}-\frac{n(p-1)\sigma}{(p+1)(1+\sigma)}}) &if\ p+1<\frac n{n-3}.
\end{cases}
\end{split}
\end{align}
Moreover,
\begin{align}\label{b11f'}
\begin{split}
&\|f_{2,0}(V_{e_n,\delta}-V_{-e_n,\delta})-f_{2,0}(V_{e_n,\delta})+f_{2,0}(V_{-e_n,\delta})\|_{L^{\frac{(p+1)(1+\sigma)}p}}
=O\Big(\delta^{\frac{(p-1)n}{q+1}}\Big),\\
&\|f_{1,0}(U_{e_n,\delta}-U_{-e_n,\delta})-f_{1,0}(U_{e_n,\delta})+f_{1,0}(U_{-e_n,\delta})\|_{L^{\frac{(q+1)(1+\sigma)}q}}\\&=
\begin{cases}
O\Big(\delta^{\frac{(p+1)n}{q+1}}\Big) \ &\frac{(p+1)(q-1)}q\geq1,\\
O\Big(\delta^{pn}\Big) \ &\frac{(p+1)(q-1)}q<1.
\end{cases}
\end{split}
\end{align}

\end{Lem}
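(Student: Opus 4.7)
\medskip

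\textbf{Proof proposal for Lemma \ref{lemb11'}.} The overall strategy mirrors that of Lemma \ref{lemb11}, but must be adapted to the case $p<\frac{n}{n-2}$, where the ground state $U$ decays at infinity like $r^{-((n-2)p-2)}$ (rather than $r^{-(n-2)}$), while $V$ still decays like $r^{-(n-2)}$. Consequently, the relevant bounds from Proposition \ref{lemexpansion} become $|W_{1,\delta}(x)|\leq C(\tilde u_{1,\delta,e_n}+\tilde u_{1,\delta,-e_n})$, $|PW_{1,\delta}-W_{1,\delta}|\leq C(\tilde v_{1,\delta,e_n}+\tilde v_{1,\delta,-e_n})$, while for the second component $|W_{2,\delta}|\leq C(u_{2,\delta,e_n}+u_{2,\delta,-e_n})$ and $|PW_{2,\delta}-W_{2,\delta}|\leq C(v_{2,\delta,e_n}+v_{2,\delta,-e_n})$ as in the earlier case. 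All the $L^t$ integrals of $\tilde u_1, u_2, \tilde v_1, v_2$ are available directly from Lemma \ref{lemb3}, which is where the thresholds $\frac{n}{(n-2)p-3}$ and $\frac{n}{n-3}$ appearing in the statement come from.

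\smallskip

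For \eqref{b111'}, the plan is to start from the elementary pointwise inequality $||s+t|^{\theta-1}(s+t)-|s|^{\theta-1}s|\leq C(|s|^{\theta-1}|t|+|t|^\theta)$ (valid for $\theta\geq 1$, and applicable since $p,q>1$), applied with $\theta=q$ (resp.\ $\theta=p$), $s=W_{i,\delta}$, $t=PW_{i,\delta}-W_{i,\delta}$. After raising to the power $\frac{(q+1)(1+\sigma)}{q}$ (resp.\ $\frac{(p+1)(1+\sigma)}{p}$) and integrating, I will separate $|W|^{\theta-1}|PW-W|$ with H\"older's inequality into $(\int|W|^{(\theta+1)(1+\sigma)})^{(\theta-1)/\theta}\cdot(\int|PW-W|^{(\theta+1)(1+\sigma)})^{1/\theta}$. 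The first factor is controlled by $O(\delta^{-n\sigma})$ (the integral of $u_{2,\delta,\xi}^{(p+1)(1+\sigma)}$ and $\tilde u_{1,\delta,\xi}^{(q+1)(1+\sigma)}$ both fall in the supercritical regime of Lemma \ref{lemb3}). The second factor produces the case split: for $|PW_{1,\delta}-W_{1,\delta}|$ the threshold is $(q+1)(1+\sigma)\gtrless \frac{n}{(n-2)p-3}$, and for $|PW_{2,\delta}-W_{2,\delta}|$ it is $(p+1)(1+\sigma)\gtrless \frac{n}{n-3}$. A direct simplification of the exponents, using $\frac{n}{q+1}=\frac{(n-2)p-2}{p+1}$, then produces exactly the stated exponents $1-\frac{qn\sigma}{(q+1)(1+\sigma)}$, $\frac{np}{q+1}$, $1-\frac{pn\sigma}{(p+1)(1+\sigma)}$ and $\frac{n}{q+1}-\frac{n(p-1)\sigma}{(p+1)(1+\sigma)}$. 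The pure $|PW-W|^\theta$ term is controlled by the same computation (it is absorbed into the second factor).

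\smallskip

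For \eqref{b11f'} the plan is to split $\Omega=\Omega_+\cup\Omega_-\cup(\Omega\setminus(\Omega_+\cup\Omega_-))$ exactly as in \eqref{+}--\eqref{+-} of the proof of Lemma \ref{lemb11}. On $\Omega_+$ I use $a=V_{e_n,\delta},\,b=V_{-e_n,\delta}$ (respectively the $U$'s) with the pointwise bound $|f_{i,0}(a-b)-f_{i,0}(a)+f_{i,0}(b)|\leq C(a^{s_i-1}b+b^{s_i})$. The crucial input from Lemma \ref{lemasym'} is that on $\Omega_+$ the ``far'' bubble satisfies the uniform estimates $V_{-e_n,\delta}(x)\leq C\delta^{n/(q+1)}$ and $U_{-e_n,\delta}(x)\leq C\delta^{pn/(q+1)}$; factoring out the appropriate power of $\delta$ and computing the remaining $L^t$ integrals of $U_{e_n,\delta}^{s-1}$ and $V_{e_n,\delta}^{s-1}$ via Lemma \ref{lemb3} gives the stated rates after the by-now-familiar exponent arithmetic. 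The symmetric role of $\Omega_-$ gives the same bound, and on the complement $\Omega\setminus(\Omega_+\cup\Omega_-)$ both bubbles are uniformly $O(\delta^{n/(q+1)})$ or $O(\delta^{pn/(q+1)})$, so that region is negligible by direct estimation.

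\smallskip

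\textbf{Main obstacle.} The routine calculations are all exponent bookkeeping; the only genuine delicate point is that several of the intermediate thresholds (such as $\frac{(p+1)(q-1)}{q}\geq 1$, $q+1\geq\frac{n}{(n-2)p-3}$, $p+1\geq\frac{n}{n-3}$) can, for a given $n$, be vacuous or hold automatically under assumption \textbf{(P)}(ii). It will be important to verify that in the regime $p\in(p_n,\frac{n}{n-2})$ all the estimates produced by Lemma \ref{lemb3} actually land in the stated regime of exponents, so that the resulting decay rate of $\delta$ is positive and sufficient to drive the later contraction argument in Proposition \ref{prop3.1}; this is where the specific threshold $p_n$ in condition \textbf{(P)}(ii), ensuring $\frac{(p-1)n}{q+1}>\frac12$, is used.
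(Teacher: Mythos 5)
Your proposal follows essentially the same route as the paper's own proof: for \eqref{b111'} the Hölder splitting into $\big(\int|W_i|^{(\theta_i+1)(1+\sigma)}\big)^{(\theta_i-1)/\theta_i}\cdot\big(\int|PW_i-W_i|^{(\theta_i+1)(1+\sigma)}\big)^{1/\theta_i}$ plus the pure $|PW_i-W_i|^{\theta_i}$ term, with the $L^t$-integrals from Lemma \ref{lemb3} applied to $\tilde u_{1,\delta,\xi},\tilde v_{1,\delta,\xi}$ (for the slower-decaying $U$-bubble when $p<\frac n{n-2}$) and $u_{2,\delta,\xi},v_{2,\delta,\xi}$ (for the $V$-bubble), yielding the case splits at $(q+1)(1+\sigma)\gtrless\frac n{(n-2)p-3}$ and $(p+1)(1+\sigma)\gtrless\frac n{n-3}$; and for \eqref{b11f'} the split $\Omega=\Omega_+\cup\Omega_-\cup\Omega_c$ with uniform bounds $V_{-e_n,\delta}=O(\delta^{n/(q+1)})$, $U_{-e_n,\delta}=O(\delta^{pn/(q+1)})$ on $\Omega_+$ feeding the remaining Lemma \ref{lemb3} integrals of $U_{e_n,\delta}^{q-1}$, $V_{e_n,\delta}^{p-1}$. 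Your closing remark correctly pinpoints the delicate point: condition \textbf{(P)}(ii), which enforces $\frac{(p-1)n}{q+1}>\frac12$, is precisely what keeps all the resulting decay rates positive enough to feed the $O(\delta^{1/2+\gamma'})$ bound used in Lemma \ref{lemR} and the contraction in Proposition \ref{prop3.1}.
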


\begin{Rem}\label{remP'}
The condition (ii) of {\bf (P)} ensures that
$\frac{p-1}{q+1}n>\frac12$ and is sufficient to show $p>\frac3{n-2}$ implying $\frac{p+1}{q+1}n>1$, which is needed in our proof.
\end{Rem}
\begin{proof}[\bf Proof of Lemma \ref{lemb11'}]

For $\sigma\geq0$,
\begin{align}\label{b112'}
\begin{split}
&\Big(\int_\Omega|PW_{2,\delta}-W_{2,\delta}|^{(p+1)(1+\sigma)}\Big)^{\frac1p}
=\begin{cases}
O(\delta^{\frac{(p+1)(1+\sigma)-n\sigma}p}) &if\ p+1\geq \frac n{n-3},\\
O(\delta^{\frac{n(p+1)(1+\sigma)}{p(q+1)}}) &if\ p+1<\frac n{n-3}.
\end{cases}
\end{split}
\end{align}

On the other hand, since $p>\frac 2{n-2}$ implies $p+1>\frac n{n-2}$, then we have
\begin{align}\label{b115'}
\begin{split}
&\int_\Omega |W_{2,\delta}|^{(p+1)(1+\sigma)}=
O(\delta^{-n\sigma}).
\end{split}
\end{align}

Combining \eqref{b112'}-\eqref{b115'}, we find that
\begin{align*}
\begin{split}
&\|f_{2,0}(PW_{2,\delta})-f_{2,0}(W_{2,\delta})\|_{L^{\frac{(p+1)(1+\sigma)}{p}}}=
\begin{cases}O(\delta^{1-\frac{pn\sigma}{(p+1)(1+\sigma)}})&if\ p+1\geq \frac n{n-3},\\
O(\delta^{\frac n{q+1}-\frac{n(p-1)\sigma}{(p+1)(1+\sigma)}}) &if\ p+1<\frac n{n-3}.
\end{cases}
\end{split}
\end{align*}

The case of
\begin{align*}
&\|f_{1,0}(PW_{1,\delta})-f_{1,0}(W_{1,\delta})\|_{L^{\frac{(q+1)(1+\sigma)}{q}}}
\end{align*}
can be obtained as follows.
\begin{align}\label{b112''}
\begin{split}
&\Big(\int_\Omega|PW_{1,\delta}-W_{1,\delta}|^{(q+1)(1+\sigma)}\Big)^{\frac1q}
=\begin{cases}
O(\delta^{\frac{(q+1)(1+\sigma)-n\sigma}q}) &if\ q+1\geq \frac n{p(n-2)-3},\\
O(\delta^{\frac{np(1+\sigma)}{q}}) &if\ q+1<\frac n{p(n-2)-3},
\end{cases}
\end{split}
\end{align}
\begin{align}\label{b115''}
\begin{split}
&\int_\Omega |W_{1,\delta}|^{(q+1)(1+\sigma)}=
O(\delta^{-n\sigma}),
\end{split}
\end{align}
\begin{align*}
\begin{split}
&\|f_{1,0}(PW_{1,\delta})-f_{1,0}(W_{1,\delta})\|_{L^{\frac{(q+1)(1+\sigma)}{q}}}=
\begin{cases}O(\delta^{1-\frac{qn\sigma}{(q+1)(1+\sigma)}})&if\ q+1\geq \frac n{p(n-2)-3},\\
O(\delta^{\frac{np}{q+1}}) &if\ q+1<\frac n{p(n-2)-3}.
\end{cases}
\end{split}
\end{align*}

\medskip

Now similar as \eqref{+}, since $p<\frac n{n-2}$,
\begin{align}\label{+'}
\begin{split}
&\|f_{2,0}(V_{e_n,\delta}-V_{-e_n,\delta})-f_{2,0}(V_{e_n,\delta})+f_{2,0}(V_{-e_n,\delta})\|_{L^{\frac{(p+1)(1+\sigma)}p}(\Omega_+)}=
O\Big(\delta^{\frac{(p-1)n}{q+1}}\Big).
\end{split}
\end{align}
By exchanging the roles of $e_n$ and $-e_n$, we obtain $$\|f_{2,0}(V_{e_n,\delta}-V_{-e_n,\delta})-f_{2,0}(V_{e_n,\delta})+f_{2,0}(V_{-e_n,\delta})\|_{L^{\frac{(p+1)(1+\sigma)}p}(\Omega_-)}
=O\Big(\delta^{\frac{(p-1)n}{q+1}}\Big).$$
On the other hand, note that on $\Omega\setminus(\Omega_+\cup\Omega_-)$, $|x-e_n|,|x+e_n|\geq\frac12$, so
\begin{align}\label{+-}
\begin{split}
&\int_{\Omega\setminus(\Omega_+\cup\Omega_-)}\Big||W_{2,\delta}|^{p-1}W_{2,\delta}-V_{e_n,\delta}^p-V_{-e_n,\delta}^p\Big|^{\frac{(p+1)(1+\sigma)}p}
=O\Big(\delta^{(pn-2(p+1))(1+\sigma)}\Big).
\end{split}
\end{align}
Finally, from \eqref{+}-\eqref{+-}, we obtain \eqref{b112'}.
\begin{align}\label{b11f2'}
\begin{split}
&\|f_{2,0}(V_{e_n,\delta}-V_{-e_n,\delta})-f_{2,0}(V_{e_n,\delta})+f_{2,0}(V_{-e_n,\delta})\|_{L^{\frac{(p+1)(1+\sigma)}p}}=O\Big(\delta^{\frac{(p-1)n}{q+1}}\Big).
\end{split}
\end{align}

Next,
\begin{align}\label{b11f1'}
\begin{split}
&\|f_{1,0}(U_{e_n,\delta}-U_{-e_n,\delta})-f_{1,0}(U_{e_n,\delta})+f_{1,0}(U_{-e_n,\delta})\|_{L^{\frac{(q+1)(1+\sigma)}q}(\Omega_\pm)}\\
&=\begin{cases}
O\Big(\delta^{\frac{(p+1)n}{q+1}}\Big) \ &\frac{(p+1)(q-1)}q\geq1,\\
O\Big(\delta^{pn}\Big) \ &\frac{(p+1)(q-1)}q<1.
\end{cases}
\end{split}
\end{align}
On the other hand,
\begin{align}\label{+-'}
\begin{split}
&\int_{\Omega\setminus(\Omega_+\cup\Omega_-)}\Big||W_{1,\delta}|^{q-1}W_{1,\delta}-U_{e_n,\delta}^q-U_{-e_n,\delta}^q\Big|^{\frac{(q+1)(1+\sigma)}q}
=O\Big(\delta^{pn(1+\sigma)}\Big).
\end{split}
\end{align}
To sum up, we in fact have proved that, if $\sigma>0$ is sufficiently small, for any $\gamma>0$,
\begin{align*}
\begin{split}
&\|f_{1,0}(U_{e_n,\delta}-U_{-e_n,\delta})-f_{1,0}(U_{e_n,\delta})+f_{1,0}(U_{-e_n,\delta})\|_{L^{\frac{(q+1)(1+\sigma)}q}}\\&=
\begin{cases}
O\Big(\delta^{\frac{(p+1)n}{q+1}}\Big) \ &\frac{(p+1)(q-1)}q\geq1,\\
O\Big(\delta^{pn}\Big) \ &\frac{(p+1)(q-1)}q<1.
\end{cases}
\end{split}
\end{align*}
concluding the proof of \eqref{b11f'}.

\end{proof}

\begin{Lem}\label{lemb12}
For $\sigma>0$ small, let $\eta\in(0,1)$, $d>0$ and $\delta=d\epsilon$. Then for every $\gamma\in(0,1)$, it holds that
\begin{align*}
&\|f_{1,\epsilon}(PW_{1,\delta})-f_{1,0}(PW_{1,\delta})\|_{L^{\frac{(q+1)(1+\sigma)}{q}}}=O(\epsilon^{1-\gamma})\\
&\|f_{2,\epsilon}(PW_{2,\delta})-f_{2,0}(PW_{2,\delta})\|_{L^{\frac{(p+1)(1+\sigma)}{p}}}=O(\epsilon^{1-\gamma}).
\end{align*}

\end{Lem}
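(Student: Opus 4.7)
The plan is to expand $f_{i,\epsilon}$ around $\epsilon=0$ using Lemma \ref{lemb2}, which gives
\[
f_{1,\epsilon}(t)-f_{1,0}(t)=\beta\epsilon\,|t|^{q-1}t\log|t|+\epsilon^{2}\xi_{1,\epsilon}(t),
\]
and the analogous expansion for $f_{2,\epsilon}$, with the pointwise remainder bounds supplied by the same lemma. Substituting $t=PW_{i,\delta}$ and taking the $L^{(q+1)(1+\sigma)/q}$ (resp.\ $L^{(p+1)(1+\sigma)/p}$) norm reduces the whole task to estimating quantities of the form $\bigl\|\,|PW_{i,\delta}|^{m}(\log|PW_{i,\delta}|)^{k}\bigr\|_{L^{s}}$ with $m$ close to $q$ (resp.\ $p$) and $k\in\{1,2\}$.

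I would then kill the logarithm by the elementary inequality $|t|^{m}|\log|t||^{k}\le C_{\gamma'}(|t|^{m+\gamma'}+|t|^{m-\gamma'})$, valid for every $\gamma'>0$. This reduces matters to bounding $\bigl\|\,|PW_{i,\delta}|^{m\pm\gamma'}\bigr\|_{L^{s}}$ by a power of $\delta$ that can be made as close to $0$ as we like by choosing $\gamma',\sigma$ small. By Proposition~\ref{lemexpansion}, $|PW_{i,\delta}|$ is dominated pointwise by $|W_{i,\delta}|$ plus correction terms controlled by the auxiliary bubbles $v_{i,\delta,\pm e_{n}}$ of Lemma~\ref{lemb3}; in particular the corrections are strictly smaller in $L^{s}$ than $W_{i,\delta}$ itself. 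Thus the computation boils down to integrals of the form $\int_{\Omega}u_{i,\delta,\pm e_{n}}^{\,t}$ for exponents $t=s(m\pm\gamma')$ lying close to the threshold exponent ($\frac{n}{n-2}$ in case~(i) of \textbf{(P)}, $\frac{n}{(n-2)p-2}$ in case~(ii)), and these are read off directly from Lemma~\ref{lemb3}.

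The main bookkeeping issue is to keep the power of $\delta$ non-negative when the exponents are perturbed simultaneously by $\sigma$ and $\gamma'$: for the leading term one finds $\|W_{1,\delta}\|_{L^{(q\pm\gamma')(q+1)(1+\sigma)/q}}^{\,q\pm\gamma'}$ is of order $\delta^{\frac{n}{(q+1)(1+\sigma)}(\gamma'-q\sigma+\gamma'\sigma)}$ (and symmetrically for the $f_{2}$ estimate with $p,\sigma,\alpha$ in place of $q,\sigma,\beta$). For a prescribed $\gamma\in(0,1)$ I choose $\sigma$ small and then $\gamma'=\gamma'(\sigma,\gamma)$ small enough that the exponent above is bounded below by $-\gamma/2$; after multiplication by the prefactor $\epsilon$ coming from Lemma~\ref{lemb2} this yields the desired $O(\epsilon^{1-\gamma})$ bound. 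The $\epsilon^{2}\xi_{i,\epsilon}$ contribution carries a further logarithm but the same argument, applied with $k=2$ instead of $k=1$, gives a bound of order $\epsilon^{2-\gamma}=o(\epsilon^{1-\gamma})$.

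The principal obstacle is purely combinatorial: tracking, in both cases $p>\frac{n}{n-2}$ and $p<\frac{n}{n-2}$, which subcase of Lemma~\ref{lemb3} applies to the perturbed exponents $(q\pm\gamma')(q+1)(1+\sigma)/q$ and $(p\pm\gamma')(p+1)(1+\sigma)/p$, and verifying that the threshold $\frac{n}{n-2}$ (resp.\ $\frac{n}{(n-2)p-2}$) is strictly crossed so that we land in the regime where $\int u^{t}=O(\delta^{n-nt/(q+1)})$; this uses the standing hypothesis (ii) of \textbf{(P)}, exactly as in Remark~\ref{remP'}. Once this sub-casing is settled the estimate is routine and uniform in $d\in(\eta,\tfrac{1}{\eta})$ because $\delta=d\epsilon$ differs from $\epsilon$ only by a bounded factor.
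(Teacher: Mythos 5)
Your proposal is correct and follows essentially the same route as the paper: expand via Lemma~\ref{lemb2}, dominate the logarithm by a small extra power, and read off the resulting integrals from Lemma~\ref{lemb3}, choosing the auxiliary exponent small enough that the $\delta$-power loss is absorbed into $O(\epsilon^{1-\gamma})$. The only cosmetic differences are that the paper uses the one-sided bound $|t|^{q+1}(\log|t|)^{(q+1)/q}\le C(1+|t|^{(q+1)(1+\theta/n)})$ (the small-$t$ tail being bounded on the bounded domain) rather than your two-sided $|t|^{m\pm\gamma'}$ domination, and it simply estimates $|PW_{1,\delta}|\le C(U_{e_n,\delta}+U_{-e_n,\delta})$ rather than invoking the finer decomposition of Proposition~\ref{lemexpansion}; neither change affects the substance. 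Your "principal obstacle" of sub-casing is also less delicate than you suggest: since $(q+1)>\frac{2n}{n-2}$ and $((n-2)p-2)(q+1)=(p+1)n$, the perturbed exponents always sit strictly above the relevant threshold in both cases of \textbf{(P)}, so one always lands in the regime $\int u^{t}=O(\delta^{\,n-nt/(q+1)})$.
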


\begin{proof}
By Lemma \ref{lemb2}, for $\sigma=0$, there exists some $\theta>0$ small such that
\begin{align}\label{b2}
\begin{split}
&\int_\Omega|f_{1,\epsilon}(PW_{1,\delta})-f_{1,0}(PW_{1,\delta})|^{\frac{q+1}q}
=\int_\Omega\Big|\alpha\epsilon |PW_{1,\delta}|^{q-1} PW_{1,\delta}\log|PW_{1,\delta}|+\epsilon^2\xi_{1,\epsilon}(PW_{1,\delta})\Big|^{\frac{q+1}q}\\
&\leq C\epsilon^{\frac{q+1}q}\int_\Omega  |PW_{1,\delta}|^{q+1}(\log|PW_{1,\delta}|)^{\frac{q+1}q}
+C\epsilon^{\frac{2(q+1)}q}\int_\Omega  |\xi_{1,\epsilon}(PW_{1,\delta})|^{\frac{q+1}q}\\
&\leq C\epsilon^{\frac{q+1}q}\Big(1+\int_\Omega  |PW_{1,\delta}|^{(q+1)(1+\frac{\theta}{n})}+\epsilon^{\frac{q+1}q}\int_\Omega  |PW_{1,\delta}|^{(q+1)(1+\frac{\theta}{n})}\Big).
\end{split}\end{align}
Note that by Lemma \ref{lemb3},
\begin{align*}
&\int_\Omega  |PW_{1,\delta}|^{(q+1)(1+\frac{\theta}{n})}\leq C\int_\Omega\Big(U_{e_n,\delta}^{(q+1)(1+\frac{\theta}{n})}+U_{-e_n,\delta}^{(q+1)(1+\frac{\theta}{n})}\Big)=O(\delta^{-\theta})
\end{align*}
which, combined with \eqref{b2}, gives
\begin{align*}
&\|f_{1,\epsilon}(PW_{1,\delta})-f_{1,0}(PW_{1,\delta})\|_{L^{\frac{(q+1)(1+\sigma)}{q}}}=O(\epsilon^{1-\gamma}).
\end{align*}
Similar estimate gives
\begin{align*}
&\|f_{2,\epsilon}(PW_{2,\delta})-f_{2,0}(PW_{2,\delta})\|_{L^{\frac{(p+1)(1+\sigma)}{p}}}=O(\epsilon^{1-\gamma}).
\end{align*}

The case of $\sigma>0$ can be handled by analogous steps.
\end{proof}

\begin{Lem}\label{lemb13}
For $d>0$ and $\delta=d\epsilon$. Then for every $\gamma\in(0,1)$, it holds that
\begin{align*}
&\|f'_{1,\epsilon}(PW_{1,\delta})-f'_{1,0}(PW_{1,\delta})\|_{L^{\frac{q+1}{q-1}}}=O(\epsilon^{1-\gamma})\\
&\|f'_{2,\epsilon}(PW_{2,\delta})-f'_{2,0}(PW_{2,\delta})\|_{L^{\frac{p+1}{p-1}}}=O(\epsilon^{1-\gamma}).
\end{align*}

\end{Lem}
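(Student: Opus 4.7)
The plan is to adapt the strategy of Lemma \ref{lemb12} to the derivatives $f'_{i,\epsilon}$, exploiting the Taylor-type expansion recorded in Lemma \ref{lemb2}. Indeed, from Lemma \ref{lemb2} one has pointwise
\begin{align*}
f'_{1,\epsilon}(t)-f'_{1,0}(t)=\beta\epsilon\bigl(|t|^{q-1}+q|t|^{q-1}\log|t|\bigr)+\epsilon^{2}\eta_{1,\epsilon}(t),
\end{align*}
with a fully analogous identity for $f'_{2,\epsilon}-f'_{2,0}$ (and controls on $|\eta_{i,\epsilon}|$ of the form $|t|^{q-1}(1+|\log|t||^{2})$, respectively $|t|^{p-1}(1+|\log|t||^{2})$, up to a harmless multiplicative $|t|^{\alpha\epsilon}$). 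Evaluating at $t=PW_{1,\delta}$ and taking $L^{(q+1)/(q-1)}$-norms reduces everything to bounding the three integrals
\begin{align*}
\||PW_{1,\delta}|^{q-1}\|_{L^{\frac{q+1}{q-1}}},\quad \||PW_{1,\delta}|^{q-1}\log|PW_{1,\delta}|\|_{L^{\frac{q+1}{q-1}}},\quad \|\eta_{1,\epsilon}(PW_{1,\delta})\|_{L^{\frac{q+1}{q-1}}}.
\end{align*}

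For the first quantity, raising to the power $(q+1)/(q-1)$ converts the integrand into $|PW_{1,\delta}|^{q+1}$, and Lemma \ref{lemb3} (applied to $U_{\pm e_n,\delta}$, which dominate $PW_{1,\delta}$ by Proposition \ref{lemexpansion} in both cases $p>\tfrac{n}{n-2}$ and $p<\tfrac{n}{n-2}$, since $q+1>\tfrac{n}{n-2}$ from the critical hyperbola) yields an $O(1)$ bound. For the logarithmic piece I would use the standard device $|\log s|^{\frac{q+1}{q-1}}\leq C_{\theta}(s^{\theta}+s^{-\theta})$ valid for all $s>0$ and every small $\theta>0$, so that
\begin{align*}
\int_{\Omega}|PW_{1,\delta}|^{q+1}\bigl|\log|PW_{1,\delta}|\bigr|^{\frac{q+1}{q-1}}\leq C\int_{\Omega}\bigl(|PW_{1,\delta}|^{q+1+\theta}+|PW_{1,\delta}|^{q+1-\theta}\bigr).
\end{align*}
Since $q+1\pm\theta>\tfrac{n}{n-2}$ for $\theta$ sufficiently small, Lemma \ref{lemb3} gives an $O(\delta^{-\theta'})$ bound (for some $\theta'=\theta'(\theta)\to 0$ as $\theta\to 0$). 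Taking the $(q-1)/(q+1)$-th root and multiplying by the prefactor $\epsilon$ produces $O(\epsilon^{1-\gamma})$ after choosing $\theta$ so that $\theta'(q-1)/(q+1)<\gamma$. The $\epsilon^{2}\eta_{1,\epsilon}$ term is treated identically and, being of order $\epsilon^{2}\delta^{-\theta'}$, is negligible compared with the main contribution. The second estimate, involving $f'_{2,\epsilon}-f'_{2,0}$, is obtained by swapping the roles $(q,W_{1,\delta})\leftrightarrow(p,W_{2,\delta})$ and using the $L^{(p+1)/(p-1)}$ counterpart of the same reasoning; here Proposition \ref{lemexpansion} and Lemma \ref{lemb3} again supply the key pointwise and integral bounds on $PW_{2,\delta}$ in both regimes of $p$.

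No step is genuinely hard, but the small technical point to watch is the uniform validity of the Lemma \ref{lemb3} bounds on $|PW_{1,\delta}|^{q+1\pm\theta}$ (and the analogous $|PW_{2,\delta}|^{p+1\pm\theta}$), since in the supercritical regime $p<\tfrac{n}{n-2}$ the decay exponent of $U$ differs from that of $V$. This is however easily handled: the integrability threshold in Lemma \ref{lemb3} is determined by $q+1>\tfrac{n}{n-2}$ (respectively $p+1>\tfrac{n}{n-2}$), which holds in both cases by the critical hyperbola constraint, so the argument goes through uniformly in $d\in(\eta,\tfrac{1}{\eta})$.
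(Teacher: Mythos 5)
Your proposal matches the paper's own proof: both start from the pointwise expansion of $f'_{i,\epsilon}-f'_{i,0}$ in Lemma \ref{lemb2}, raise to the power $\frac{q+1}{q-1}$ (resp.\ $\frac{p+1}{p-1}$), absorb the logarithms into a small extra power of $|PW_{i,\delta}|$, and invoke Lemma \ref{lemb3} to get an $O(\delta^{-\theta'})$ bound that, multiplied by the prefactor $\epsilon$, yields $O(\epsilon^{1-\gamma})$. The only difference is cosmetic: you spell out the inequality $|\log s|^{\frac{q+1}{q-1}}\le C_\theta(s^\theta+s^{-\theta})$ that the paper leaves implicit in its final ``$\le C\epsilon^{1-\gamma}$'' step.
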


\begin{proof}
By Lemma \ref{lemb2},
\begin{align*}
\begin{split}
&\Big(\int_\Omega|f'_{1,\epsilon}(PW_{1,\delta})-f'_{1,0}(PW_{1,\delta})|^{\frac{q+1}{q-1}}\Big)^{\frac{q-1}{q+1}}\\
&\leq C\epsilon\Big(\int_\Omega  |PW_{1,\delta}|^{q+1}+ |PW_{1,\delta}|^{q+1}(\log|PW_{1,\delta}|)^{\frac{q+1}{q-1}}\Big)^{\frac{q-1}{q+1}}\\&
\quad+C\epsilon^{2}\Big(\int_\Omega \Big( |PW_{1,\delta}|^{q+1}+ |PW_{1,\delta}|^{q+1+\beta\epsilon\frac{q+1}{q-1}}\Big)
\Big(\log|PW_{1,\delta}|+(\log|PW_{1,\delta}|)^2\Big)^{\frac{q+1}{q-1}}\Big)^{\frac{q-1}{q+1}}\\
&\leq C\epsilon^{1-\gamma}.
\end{split}\end{align*}

Similar estimate gives
\begin{align*}
\begin{split}
&\Big(\int_\Omega|f'_{2,\epsilon}(PW_{2,\delta})-f'_{2,0}(PW_{2,\delta})|^{\frac{p+1}{p-1}}\Big)^{\frac{p-1}{p+1}}\leq C\epsilon^{1-\gamma}.
\end{split}\end{align*}

\end{proof}

\medskip
\section{Integrals in the Leading Term}
\renewcommand{\theequation}{B.\arabic{equation}}

For the expansion of the energy in section 4, we need calculate some integrals.
\begin{Lem}\label{lemc1}
There hold that
\begin{align*}
\begin{split}
&\int_\Omega U^{q+1}_{e_n,\delta}=\int_\Omega U^{q+1}_{-e_n,\delta}=\frac{\mathcal A_1}2-\mathcal B_1\delta+o(\delta)\\
&\int_\Omega V^{p+1}_{e_n,\delta}=\int_\Omega V^{p+1}_{-e_n,\delta}=\frac{\mathcal A_2}2-\mathcal B_2\delta+o(\delta).
\end{split}\end{align*}

\end{Lem}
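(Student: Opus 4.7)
First I would dispose of the two ``equal sides'' via the isometry $x\mapsto -x$ of $\Omega=B_1(0)$: combined with the radial symmetry of $U$ and $V$, this identifies $U_{e_n,\delta}(x)$ with $U_{-e_n,\delta}(-x)$ and similarly for $V$, so the equalities $\int_\Omega U^{q+1}_{e_n,\delta}=\int_\Omega U^{q+1}_{-e_n,\delta}$ and its $V$-analogue come for free. It then suffices to expand $\int_\Omega U_{e_n,\delta}^{q+1}$ and $\int_\Omega V_{e_n,\delta}^{p+1}$. For the $U$-integral I would rescale by $z=(x-e_n)/\delta$, which converts it to $\int_{\Omega_\delta}U^{q+1}(z)\,dz$ on the blown-up domain
$$\Omega_\delta := \{z\in\R^n : |e_n+\delta z|<1\} = \Big\{z : z_n<-\tfrac{\delta}{2}|z|^2\Big\},$$
a small perturbation of the half-space $\R^n_- := \{z_n<0\}$. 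Since $U$ is radial, $\int_{\R^n_-}U^{q+1}=\mathcal A_1/2$, and the expansion reduces to proving
$$\int_{\R^n_-\setminus\Omega_\delta}U^{q+1}(z)\,dz=\mathcal B_1\delta+o(\delta).$$

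Next I would decompose $\R^n_-\setminus\Omega_\delta$ into a thin crescent $\{(-1+\sqrt{1-\delta^2|z'|^2})/\delta\le z_n<0,\ |z'|<1/\delta\}$ hugging the hyperplane $z_n=0$, together with a far piece (the component with $z_n\le(-1-\sqrt{1-\delta^2|z'|^2})/\delta$ plus the lateral tail $|z'|\ge 1/\delta$) that is contained in $\{|z|\ge c/\delta\}$ for some $c>0$. The far piece is controlled by the pointwise bounds from Lemma \ref{lemasym}: $U(r)\le Cr^{-(n-2)}$ when $p>n/(n-2)$, and $U(r)\le Cr^{-((n-2)p-2)}$ when $p<n/(n-2)$. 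Combined with the critical-hyperbola identity $(q+1)((n-2)p-2)=n(p+1)$, these give a contribution of order $\delta^{(n-2)(q+1)-n}$ or $\delta^{np}$ respectively; under hypothesis (P), and in particular the choice of $p_n$, the relevant exponent exceeds $1$, so the far piece is $o(\delta)$.

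For the crescent I would further truncate to $|z'|<\sqrt{15}/(8\delta)$, which is exactly the $z$-rescaling of the projection $\Xi$ of $\partial\Omega\cap B_{1/2}(e_n)$, and reverse the substitution via $(x',x_n)=(\delta z',1+\delta z_n)$. This identifies the truncated integral with $\mathcal B_1\delta$ on the nose (the Jacobian $\delta^n$ cancels the $\delta^{-n-1}$ in the definition of $\mathcal B_1$ to leave the overall factor $\delta$); the leftover annulus $\sqrt{15}/(8\delta)\le |z'|<1/\delta$ is again $o(\delta)$ by the same decay estimates. Combining the three pieces yields $\int_\Omega U_{e_n,\delta}^{q+1}=\mathcal A_1/2-\mathcal B_1\delta+o(\delta)$. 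The expansion of $V_{e_n,\delta}^{p+1}$ is entirely parallel, using $V(r)\le Cr^{-(n-2)}$ (valid in both ranges of $p$ by Lemma \ref{lemasym}) in place of the $U$-decay and the constant $\mathcal B_2$ in place of $\mathcal B_1$. The main delicacy, I expect, will be the $o(\delta)$ control of the far-field contribution in the subcritical regime $p<n/(n-2)$, where $U$ decays more slowly than the standard Laplacian bubble and one must invoke the critical-hyperbola identity to upgrade the weaker pointwise decay into the required integral rate.
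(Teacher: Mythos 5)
Your argument is correct and is essentially the same proof as the paper's. You rescale by $z=(x-e_n)/\delta$ and write $\int_{\Omega_\delta}U^{q+1}=\int_{\R^n_-}U^{q+1}-\int_{\R^n_-\setminus\Omega_\delta}U^{q+1}$; the paper does the equivalent decomposition in the unscaled variable, writing $\int_{\Omega_+}U^{q+1}_{e_n,\delta}=\tfrac12\int_{B_{1/2}(e_n)}U^{q+1}_{e_n,\delta}-\int_\Theta U^{q+1}_{e_n,\delta}$. After your truncation to $|z'|<\sqrt{15}/(8\delta)$ and the change of variables $x=e_n+\delta z$, your truncated crescent is exactly the paper's $\Theta=\{|x'|<\sqrt{15}/8,\ \sqrt{1-|x'|^2}<x_n<1\}$, and the Jacobian count $\delta^{-n}\,dx=dz$ against the $\delta^{-n-1}$ in the definition of $\mathcal B_1$ gives $\int_\Theta U^{q+1}_{e_n,\delta}=\delta\mathcal B_1$ on the nose, as you say. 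The far-field and leftover-annulus tails are controlled exactly as in the paper by the decay rates from Lemma \ref{lemasym}, producing $O(\delta^{(n-2)(q+1)-n})$ for $p>\tfrac n{n-2}$ and $O(\delta^{np})$ for $p<\tfrac n{n-2}$ (using $((n-2)p-2)(q+1)=n(p+1)$). Two small remarks: (i) your attribution of the $o(\delta)$ bound to the specific threshold $p_n$ in hypothesis \textbf{(P)} is stronger than needed, since $p>1$ already gives $np>n>1$ and $q>p$ with the critical hyperbola gives $(n-2)(q+1)>2n>n+1$, so the far-field estimate holds well inside the admissible range; (ii) the paper's proof also derives, via the substitution $y_n=(1-x_n)/\sqrt{\delta^2+|x'|^2}$ and the two-sided bounds \eqref{ab}, that $\mathcal B_1,\mathcal B_2=O(1)$ as $\delta\to0$, which is quoted in the remark following \eqref{notation} and used in the energy expansion; your proof establishes the stated expansion but does not record this boundedness, which you would want to add if reproducing the full argument.
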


\begin{proof}
Recall that
$\Omega=B_1(0)$ and
\begin{align*}
\Omega_+=\Omega\cap B_{\frac12}(e_n),\ \ \ \Omega_-=\Omega\cap B_{\frac12}(-e_n).
\end{align*}
We write
\begin{align*}
\begin{split}
&\int_{\Omega_+} U^{q+1}_{e_n,\delta}=\frac12\int_{B_{\frac12}(e_n)} U^{q+1}_{e_n,\delta}-\int_\Theta  U^{q+1}_{e_n,\delta}
\end{split}\end{align*}
where $$\Theta=B^-_{\frac12}(e_n)\setminus\Omega_+,\ \ B^-_{\frac12}(e_n)=\{y\in B_{\frac12}(e_n):y_n<1\}.$$
Let  $\Xi$ as the projection of $\partial \Omega\cap B_{\frac12}(e_n)$ in the variables $x'=(x_1,\ldots,x_{n-1})$.
Then by calculation,  $\Theta=\{x'\in\Xi:\sqrt{1-|x'|^2}<x_n<1\}$ with $\Xi=\{x'\in\R^{n-1}:|x'|<\frac{\sqrt{15}}8\}$.
It is easy to see from Lemma \ref{lemasym} that
\begin{align*}
\begin{split}
 \frac12\int_{B_{\frac12}(e_n)} U^{q+1}_{e_n,\delta}&=\frac12\int_{\R^n} U^{q+1}_{e_n,\delta}-\frac12\int_{\R^n\setminus B_{\frac12}(e_n)} U^{q+1}_{e_n,\delta}
\\&=\frac12\int_{\R^n} U^{q+1}+\begin{cases}O(\delta^{(n-2)(q+1)-n}) &if\ p>\frac n{n-2},\\
O(\delta^{pn}) &if\ p<\frac n{n-2}.
\end{cases}
\end{split}\end{align*}
Moreover, by definition of $\mathcal B_1$,
\begin{align*}
\begin{split}
&\int_\Theta  U^{q+1}_{e_n,\delta}=\int_\Xi\int_{\sqrt{1-|x'|^2}}^1\delta^{-n}U^{q+1}\Big(\frac{|x-e_n|}\delta\Big)dx_ndx'
=\delta\mathcal B_1
\end{split}\end{align*}
In view of Lemma \ref{lemasym}, there exists some positive constants $a_1,b_1,a_1',b_1',a_2,b_2>0$ such that
\begin{align}\label{ab}
&\begin{cases}\dis\frac{a_1}{(1+|x|^2)^{\frac{n-2}2}}\leq U(x)\leq\frac{b_1}{(1+|x|^2)^{\frac{n-2}2}} &p>\frac n{n-2}\\
\dis\frac{a_1'}{(1+|x|^2)^{\frac{(n-2)p-2}2}}\leq U(x)\leq\frac{b_1'}{(1+|x|^2)^{\frac{(n-2)p-2}2}} &p<\frac n{n-2},
 \end{cases}\\
&\quad\frac{a_2}{(1+|x|^2)^{\frac{n-2}2}}\leq V(x)\leq \frac{b_2}{(1+|x|^2)^{\frac{n-2}2}}.
\end{align}

Inspired by the argument in \cite{arxiv}, we estimate that for $p>\frac n{n-2}$,
\begin{align*}
\begin{split}
&\mathcal B_1=\int_\Xi\int_{\sqrt{1-|x'|^2}}^1\delta^{-n-1}U^{q+1}\Big(\frac{|x-e_n|}\delta\Big)dx_ndx'\\
&\geq a_1^{q+1}\int_\Xi\int_{\sqrt{1-|x'|^2}}^1\delta^{-n-1}\frac1{\Big(1+\frac{|x-e_n|^2}{\delta^2}\Big)^{\frac{(n-2)(q+1)}2}}dx_ndx'\\
&= a_1^{q+1}\int_\Xi\int_{\sqrt{1-|x'|^2}}^1\frac{\delta^{(n-2)(q+1)-n-1}}{(\delta^2+|x-e_n|^2)^{\frac{(n-2)(q+1)}2}}dx_ndx'\\
&=\int_\Xi\frac{ a_1^{q+1}\delta^{(n-2)(q+1)-n-1}}{(\delta^2+|x'|^2)^{\frac{(n-2)(q+1)}2}}\int_0^{\frac{1-\sqrt{1-|x'|^2}}{\sqrt{\delta^2+|x'|^2}}}\frac1{
(1+y_n^2)^{\frac{(n-2)(q+1)}2}}dy_ndx'\\
&=\int_\Xi\frac{ a_1^{q+1}\delta^{(n-2)(q+1)-n-1}}{(\delta^2+|x'|^2)^{\frac{(n-2)(q+1)}2}}(\frac{|x'|^2}2+O(|x'|^4))dx'\\
&=\frac{ a_1^{q+1}}2\int_{\frac\Xi\delta}\frac{|y'|^2}{(1+|y'|^2)^{\frac{(n-2)(q+1)}2}}dy'+O(\delta)\int_{\frac\Xi\delta}\frac{|y'|^4}{(1+|y'|^2)^{\frac{(n-2)(q+1)}2}}dy'\\
&=\frac{ a_1^{q+1}}2\int_{\R^{n-1}}\frac{|y'|^2}{(1+|y'|^2)^{\frac{(n-2)(q+1)}2}}dy'+o(1)
\end{split}\end{align*}
where we  use the change of variables $y_n=\frac{1-x_n}{\sqrt{\delta^2+|x'|^2}}$ and so $\delta^2+|x-e_n|^2=(\delta^2+|x'|^2)(1+y_n^2)$.
By the same way, we have for the other direction that
\begin{align*}
\begin{split}
&\mathcal B_1\leq\frac{b_1^{q+1}}2\int_{\R^{n-1}}\frac{|y'|^2}{(1+|y'|^2)^{\frac{(n-2)(q+1)}2}}dy'+o(1).
\end{split}\end{align*}

While for $p<\frac n{n-2}$, by use of the same transformation,
\begin{align*}
\begin{split}
&\mathcal B_1=\int_\Xi\int_{\sqrt{1-|x'|^2}}^1\delta^{-n-1}U^{q+1}\Big(\frac{|x-e_n|}\delta\Big)dx_ndx'\\
&\geq { a_1'}^{q+1}\int_\Xi\int_{\sqrt{1-|x'|^2}}^1\delta^{-n-1}\frac1{\Big(1+\frac{|x-e_n|^2}{\delta^2}\Big)^{\frac{((n-2)p-2)(q+1)}2}}dx_ndx'\\
&= { a_1'}^{q+1}\int_\Xi\int_{\sqrt{1-|x'|^2}}^1\frac{\delta^{(p+1)n-n-1}}{(\delta^2+|x-e_n|^2)^{\frac{(p+1)n}2}}dx_ndx'\\
&=\int_\Xi\frac{{ a_1'}^{q+1}\delta^{(p+1)n-n-1}}{(\delta^2+|x'|^2)^{\frac{(p+1)n}2}}\int_0^{\frac{1-\sqrt{1-|x'|^2}}{\sqrt{\delta^2+|x'|^2}}}\frac1{
(1+y_n^2)^{\frac{(p+1)n}2}}dy_ndx'\\
&=\int_\Xi\frac{ {a_1'}^{q+1}\delta^{pn-1}}{(\delta^2+|x'|^2)^{\frac{(p+1)n}2}}(\frac{|x'|^2}2+O(|x'|^4))dx'\\
&=\frac{ { a_1'}^{q+1}}2\int_{\frac\Xi\delta}\frac{|y'|^2}{(1+|y'|^2)^{\frac{(p+1)n}2}}dy'+O(\delta)\int_{\frac\Xi\delta}\frac{|y'|^4}{(1+|y'|^2)^{\frac{(p+1)n}2}}dy'\\
&=\frac{{ a_1'}^{q+1}}2\int_{\R^{n-1}}\frac{|y'|^2}{(1+|y'|^2)^{\frac{(p+1)n}2}}dy'+o(1)
\end{split}\end{align*}
and also
\begin{align*}
\begin{split}
&\mathcal B_1\leq\frac{{b'_1}^{q+1}}2\int_{\R^{n-1}}\frac{|y'|^2}{(1+|y'|^2)^{\frac{(p+1)n}2}}dy'+o(1).
\end{split}\end{align*}

\medskip
To sum up, we have
\begin{align*}
\begin{split}
&\int_{\Omega} U^{q+1}_{e_n,\delta}=\int_{\Omega_+} U^{q+1}_{e_n,\delta}+o(\delta)=\frac{\mathcal A_1}2-\mathcal B_1\delta+o(\delta).
\end{split}\end{align*}
Similarly,\begin{align*}
\begin{split}
&\int_{\Omega} U^{q+1}_{-e_n,\delta}=\int_{\Omega_-} U^{q+1}_{-e_n,\delta}+o(\delta)=\frac{\mathcal A_1}2-\mathcal B_1\delta+o(\delta).
\end{split}\end{align*}

The same argument gives that\begin{align*}
\begin{split}
&\int_\Omega V^{p+1}_{e_n,\delta}=\int_\Omega V^{p+1}_{-e_n,\delta}=\frac{\mathcal A_2}2-\mathcal B_2\delta+o(\delta).
\end{split}\end{align*}
\end{proof}

\medskip

\begin{Lem}\label{lemb6}
As $\delta\rightarrow0$,
\begin{align*}
&\int_\Omega U_{e_n,\delta}U_{-e_n,\delta}^q=\int_\Omega U_{-e_n,\delta}U_{e_n,\delta}^q=o(\delta),\\
&\int_\Omega V_{e_n,\delta}V_{-e_n,\delta}^p=\int_\Omega V_{-e_n,\delta}V_{e_n,\delta}^p=o(\delta).
\end{align*}

\end{Lem}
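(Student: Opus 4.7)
The plan is to split $\Omega = B_1(0)$ into the three pieces $\Omega_+$, $\Omega_-$, and $\Omega_c := \Omega \setminus (\Omega_+ \cup \Omega_-)$ introduced in \eqref{omegasplit}, and to bound each contribution by extracting a pointwise decay factor from whichever bubble has its concentration point outside the region in question. Since $\Omega$ is invariant under $x\mapsto -x$ (which swaps $e_n$ and $-e_n$), the two integrals of each type coincide, so it suffices to estimate $\int_\Omega U_{e_n,\delta} U_{-e_n,\delta}^q$ and $\int_\Omega V_{e_n,\delta} V_{-e_n,\delta}^p$.

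On $\Omega_+$ the distance $|x+e_n|\geq 3/2$ is bounded below, so Lemmas \ref{lemasym}--\ref{lemasym'} give the uniform pointwise estimates
\[
V_{-e_n,\delta}(x)\leq C\delta^{n/(q+1)},\qquad U_{-e_n,\delta}(x)\leq C\delta^{\gamma_U},
\]
with $\gamma_U=n/(p+1)$ when $p>\tfrac{n}{n-2}$ and $\gamma_U=np/(q+1)$ when $p<\tfrac{n}{n-2}$; the latter exponent comes from the asymptotics $U(r)\sim r^{-((n-2)p-2)}$ together with the algebraic identity $((n-2)p-2)(q+1)=n(p+1)$, which is a direct consequence of the critical hyperbola. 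Raising to the appropriate power and applying Lemma \ref{lemb3} with $t=1$ (which falls in the ``small $t$'' regime in every case, since $1<n/(n-2)$ always and, in case (ii), $1<n/((n-2)p-2)=(q+1)/(p+1)$), one gets $\int_{\Omega_+} V_{e_n,\delta}=O(\delta^{n/(q+1)})$ and $\int_{\Omega_+} U_{e_n,\delta}=O(\delta^{\gamma_U})$, hence
\[
\int_{\Omega_+} V_{e_n,\delta}V_{-e_n,\delta}^p=O(\delta^{(p+1)n/(q+1)}),\qquad \int_{\Omega_+} U_{e_n,\delta}U_{-e_n,\delta}^q=O(\delta^{(q+1)\gamma_U}).
\]
By the $x\mapsto -x$ symmetry the bounds on $\Omega_-$ are the same, and on $\Omega_c$ both bubbles are uniformly small of the same order because $|x\pm e_n|\geq 1/2$, so the integrands there enjoy the same (in fact better) decay and the bounded volume of $\Omega_c$ gives an identical bound.

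It remains to verify that the resulting exponents strictly exceed $1$. Using the identity $((n-2)p-2)(q+1)=n(p+1)$ and its mirror $((n-2)q-2)(p+1)=n(q+1)$, one computes $(p+1)n/(q+1)=(n-2)p-2$ and, in case (i), $(q+1)\gamma_U=n(q+1)/(p+1)=(n-2)q-2$, while in case (ii) $(q+1)\gamma_U=np$. Since $p<q$ and $n\geq 4$: $(n-2)p-2>1$ is equivalent to $p>3/(n-2)$, which is forced by $p>n/(n-2)$ in case (i) and by the explicit bound $p>p_n$ in case (ii) (one checks $p_n>3/(n-2)$ for all $n\geq 4$); $(n-2)q-2>1$ holds since $q>(n+2)/(n-2)$ always; and $np>1$ is trivial. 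Hence each contribution is $o(\delta)$. The principal technical point is handling the slow-decay case $p<n/(n-2)$, where $U$ decays strictly slower than the Newtonian kernel; the identity $((n-2)p-2)(q+1)=n(p+1)$ is what makes the bookkeeping clean and converts the decay exponent into the tidy form $np/(q+1)$.
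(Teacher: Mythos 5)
Your overall strategy coincides with the paper's: split $\Omega$ into $\Omega_+$, $\Omega_-$, $\Omega_c$, absorb the pointwise decay of the far bubble, and integrate the near one (you invoke Lemma~\ref{lemb3}; the paper performs the same rescaled integral directly). Your $\Omega_+$ computation and the identities $(p+1)n/(q+1)=(n-2)p-2$ and $(q+1)n/(p+1)=(n-2)q-2$ are correct. However, the assertion ``by the $x\mapsto -x$ symmetry the bounds on $\Omega_-$ are the same'' is not right and leaves a genuine gap. The change of variable $x\mapsto -x$ carries $\Omega_-$ to $\Omega_+$ \emph{and} swaps $U_{e_n,\delta}\leftrightarrow U_{-e_n,\delta}$, so it gives $\int_{\Omega_-}U_{e_n,\delta}U_{-e_n,\delta}^q=\int_{\Omega_+}U_{-e_n,\delta}U_{e_n,\delta}^q$, an integral in which the far bubble now carries power $1$ and the near bubble power $q$; that is a different structure from the one you estimated. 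The correct $\Omega_-$ bound is $C\delta^{\gamma_U}\int_{\Omega_-}U_{-e_n,\delta}^q=O\big(\delta^{\gamma_U+n/(q+1)}\big)$, not $O\big(\delta^{(q+1)\gamma_U}\big)$: in case (i) the exponent is $\gamma_U+n/(q+1)=n/(p+1)+n/(q+1)=n-2$ rather than $(n-2)q-2$, and in case (ii) it is $n(p+1)/(q+1)=(n-2)p-2$ rather than $np$; an analogous discrepancy occurs for the $V$-integral. These corrected exponents still exceed $1$ under (P) (since $n-2\geq 2$ and $(n-2)p-2>1$), so the lemma does hold, but your checklist of exponents does not cover the $\Omega_-$ pieces and the symmetry argument as written does not reduce them to the $\Omega_+$ pieces. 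The paper's proof treats $\Omega_+$ and $\Omega_-$ separately precisely because the two contributions have different exponents.
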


\begin{proof}

Firstly,
\begin{align*}
&\int_\Omega U_{e_n,\delta}U_{-e_n,\delta}^q\leq C\int_\Omega\delta^{-n}U\Big(\frac{|x-e_n|}\delta\Big)U^q\Big(\frac{|x+e_n|}\delta\Big)\\
&\leq C\delta^{-n}\Big(\int_{\Omega_+}+\int_{\Omega_-}\Big)U\Big(\frac{|x-e_n|}\delta\Big)U^q\Big(\frac{|x+e_n|}\delta\Big)
+\begin{cases}O(\delta^{(n-2)(q+1)-n}) &if\ p>\frac n{n-2},\\
O(\delta^{pn}) &if\ p<\frac n{n-2}
\end{cases}\\
&=\begin{cases}O(\delta^{\frac{pq-1}{q+1}n}+\delta^{pn}+\delta^{(n-2)(q+1)-n}) &if\ p>\frac n{n-2},\\
O(\delta^{\frac{pq-1}{q+1}n}+\delta^{pn}+\delta^{pn}) &if\ p<\frac n{n-2},
\end{cases}
\end{align*}
where we use
\begin{align*}
&\delta^{-n}\int_{\Omega_+}U\Big(\frac{|x-e_n|}\delta\Big)U^q\Big(\frac{|x+e_n|}\delta\Big)
\leq C\int_{\Omega_+}\frac{\delta^{q((n-2)p-2)-n}}{(1+\frac{|x-e_n|}\delta)^{(n-2)p-2}}\\
&\leq C\int_{\frac{\Omega_+-e_n}{\delta}}\frac{\delta^{q((n-2)p-2)}}{(1+|y|)^{(n-2)p-2}}=C\delta^{q((n-2)p-2)}
\Big(1+\int_1^{\frac1{2\delta}}r^{n-1-((n-2)p-2)}dr\Big)\\&
=O(\delta^{pn})=o(\delta),
\end{align*}
and since $q((n-2)p-2)>n$ and  $\frac{pq-1}{q+1}n=\frac{q(p+1)}{q+1}n-n=(\frac{qn}{(q+1)(n-2)-n}-1)n=2(p+1)$,
\begin{align*}
&\delta^{-n}\int_{\Omega_-}U\Big(\frac{|x-e_n|}\delta\Big)U^q\Big(\frac{|x+e_n|}\delta\Big)
\leq C\int_{\Omega_-}\frac{\delta^{\frac{p+1}{q+1}n-n}}{(1+\frac{|x+e_n|}\delta)^{q((n-2)p-2)}}\\
&\leq C\int_{\frac{\Omega_-+e_n}{\delta}}\frac{\delta^{\frac{p+1}{q+1}n}}{(1+|y|)^{q((n-2)p-2)}}=O(\delta^{\frac{q(p+1)}{q+1}n-n})=O(\delta^{\frac{pq-1}{q+1}n})=o(\delta).
\end{align*}

\medskip

On the other hand,
\begin{align*}
&\int_\Omega V_{e_n,\delta}V_{-e_n,\delta}^p\leq C\int_\Omega\delta^{-n}V\Big(\frac{|x-e_n|}\delta\Big)V^p\Big(\frac{|x+e_n|}\delta\Big)\\
&\leq C\delta^{-n}\Big(\int_{\Omega_+}+\int_{\Omega_-}\Big)V\Big(\frac{|x-e_n|}\delta\Big)V^p\Big(\frac{|x+e_n|}\delta\Big)
+O(\delta^{(n-2)(p+1)-n})=o(\delta),
\end{align*}
where, in view of Remark \ref{remP'},  we use
\begin{align*}
&\delta^{-n}\int_{\Omega_+}V\Big(\frac{|x-e_n|}\delta\Big)V^p\Big(\frac{|x+e_n|}\delta\Big)
\leq C\int_{\Omega_+}\frac{\delta^{p(n-2)-n}}{(1+\frac{|x-e_n|}\delta)^{n-2}}\\
&\leq C\int_{\frac{\Omega_+-e_n}{\delta}}\frac{\delta^{p(n-2)}}{(1+|y|)^{n-2}}=C\delta^{p(n-2)}
\Big(1+\int_1^{\frac1{2\delta}}r^{n-1-(n-2)}dr\Big)=O(\delta^{(n-2)p-2 })=O(\delta^{\frac{p+1}{q+1}n})=o(\delta)
\end{align*}
and 
\begin{align*}
&\delta^{-n}\int_{\Omega_-}V\Big(\frac{|x-e_n|}\delta\Big)V^p\Big(\frac{|x+e_n|}\delta\Big)
\leq C\int_{\Omega_-}\frac{\delta^{-2}}{(1+\frac{|x+e_n|}\delta)^{p(n-2)}}\\
&\leq C\int_{\frac{\Omega_-+e_n}{\delta}}\frac{\delta^{n-2}}{(1+|y|)^{p(n-2)}}=O(\delta^{(n-2)p-2 })=o(\delta).
\end{align*}

Therefore, we deduce
\begin{align*}
&\int_\Omega V_{e_n,\delta}V_{-e_n,\delta}^p=o(\delta).
\end{align*}

\end{proof}
\medskip

\begin{Lem}\label{lemb8}
As $\delta\rightarrow0$,
\begin{align*}
&\delta^{-\frac n{q+1}+1}\int_\Omega\varphi_{1,0}(\frac{e_n-x}\delta)U_{e_n,\delta}^q
=\delta^{-\frac n{q+1}+1}\int_\Omega\varphi_{1,0}(\frac{e_n+x}\delta)U_{-e_n,\delta}^q=-\frac{\mathcal C_1}{2}\delta+o(\delta),\\
&\delta^{-\frac n{q+1}+1}\int_\Omega\varphi_{2,0}(\frac{e_n-x}\delta)V_{e_n,\delta}^p
=\delta^{-\frac n{q+1}+1}\int_\Omega\varphi_{2,0}(\frac{e_n+x}\delta)V_{-e_n,\delta}^p=-\frac{\mathcal C_2}{2}\delta+o(\delta).
\end{align*}

\end{Lem}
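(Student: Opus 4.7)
The plan is to reduce each integral to one over a blown-up domain, identify its limit via Green's identity on the half-space, and then read off the constant $\mathcal{C}_1$ or $\mathcal{C}_2$ from the Neumann boundary condition satisfied by $\varphi_{i,0}$. Focus first on $\delta^{-\frac{n}{q+1}+1}\int_\Omega \varphi_{1,0}(\frac{e_n-x}{\delta}) U_{e_n,\delta}^q\,dx$. Substituting $y=(x-e_n)/\delta$ and using $U_{e_n,\delta}^q(x)=\delta^{-\frac{nq}{q+1}} U^q(|y|)$ together with $dx=\delta^n dy$, one computes
\[
\delta^{-\frac{n}{q+1}+1}\!\int_\Omega \varphi_{1,0}\bigl(\tfrac{e_n-x}{\delta}\bigr)\, U_{e_n,\delta}^q\,dx = \delta\int_{\Omega_\delta} \varphi_{1,0}(-y)\,U^q(|y|)\,dy,
\]
where $\Omega_\delta=(\Omega-e_n)/\delta = \{y\in\mathbb{R}^n : y_n<-\delta|y|^2/2\}\subset \mathbb{R}^n_-$. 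Then I would set $z=-y$ to rewrite the integral as one over $-\Omega_\delta\subset\mathbb{R}^n_+$, which increases monotonically to $\mathbb{R}^n_+$ as $\delta\to0$.

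The next step is to pass to the limit: I would show
\[
\lim_{\delta\to 0}\int_{-\Omega_\delta}\varphi_{1,0}(z)\,U^q(|z|)\,dz = I:=\int_{\mathbb{R}^n_+}\varphi_{1,0}(z)\,U^q(|z|)\,dz.
\]
This requires an error estimate on $\mathbb{R}^n_+\setminus(-\Omega_\delta)=\{z_n\in[0,\delta|z|^2/2)\}$, split into a region $|z|\le R$ (whose measure is $O(\delta R^{n+1})$, giving an $O(\delta R^{n+1})$ contribution since the integrand is bounded there) and the tail $|z|>R$, controlled via the decays $|\varphi_{1,0}(z)|\lesssim (1+|z|)^{-(n-3)}$ or $(1+|z|)^{-(p(n-2)-3)}$ from \eqref{estphi0}--\eqref{estphi0'} and $U^q(|z|)\lesssim|z|^{-q(n-2)}$ or $|z|^{-q((n-2)p-2)}$ from Lemma \ref{lemasym'}. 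In both ranges of $p$ the product is integrable at infinity (using $q(n-2)>n$ in case (i) and $q((n-2)p-2)=(p+1)n>n$ in case (ii)), so picking $R=R(\delta)\to\infty$ slowly enough gives an $o(1)$ error.

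To evaluate $I$, I would apply Green's identity on $B_R^+=B_R\cap\mathbb{R}^n_+$ with $u=\varphi_{1,0}$, $v=V$, using $-\Delta\varphi_{1,0}=0$ in $\mathbb{R}^n_+$, $-\Delta V=U^q$ in $\mathbb{R}^n$, and letting $R\to\infty$. On the spherical cap $\partial B_R\cap\mathbb{R}^n_+$, Lemma \ref{lemasym'} together with \eqref{estphi0}--\eqref{estphi0'} yields boundary integrals of order $R^{-(n-3)}\to 0$ in both cases (i) and (ii), so the surface contribution at infinity vanishes. Since $V$ is radial, $\partial_{y_n}V|_{y_n=0}=0$, while the Neumann condition \eqref{phi0} gives $\partial_{y_n}\varphi_{1,0}|_{y_n=0}=-\tfrac{|y'|}{2}U'(|y'|)$, and the outward normal on $\partial\mathbb{R}^n_+$ is $\nu=-e_n$. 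Thus
\[
-I=\int_{\mathbb{R}^n_+}\varphi_{1,0}(-\Delta V) = \int_{\partial\mathbb{R}^n_+}\!\!\bigl(\varphi_{1,0}\partial_\nu V - V\partial_\nu\varphi_{1,0}\bigr)dy' = -\tfrac12\!\int_{\mathbb{R}^{n-1}}\!\!|y'|U'(|y'|)V(|y'|)\,dy',
\]
so $I=-\mathcal{C}_1/2$ by the definition of $\mathcal{C}_1$. Combining the rescaling identity with $I+o(1)$ yields the first equality; the symmetry $x\mapsto -x$ together with the analogous substitution $y=(x+e_n)/\delta$ immediately handles the companion integral involving $\varphi_{1,0}(\tfrac{e_n+x}{\delta})$ and $U_{-e_n,\delta}$. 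The second line of the lemma is proved by exactly the same scheme, with the roles interchanged ($V^p=-\Delta U$ and the Neumann condition in \eqref{phi0} for $\varphi_{2,0}$ producing $\mathcal{C}_2$ instead of $\mathcal{C}_1$).

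The main obstacle is the error control in passing from $\Omega_\delta$ to $\mathbb{R}^n_-$: although the region of discrepancy has width shrinking like $\delta|y|^2$, its contribution at large $|y|$ can a priori grow, so the decay assertions of Lemma \ref{lemasym'} and Proposition \ref{lemexpansion} (which differ substantially between the cases $p>\tfrac{n}{n-2}$ and $p<\tfrac{n}{n-2}$) must be invoked carefully. A secondary subtlety is justifying integration by parts on the unbounded half-space $\mathbb{R}^n_+$; one does this on $B_R^+$ and uses the sharp decay rates above to show that the surface term at $|y|=R$ tends to zero, while the bulk term $\int_{\mathbb{R}^n_+}\varphi_{1,0}U^q$ is integrable because the sum of the decay exponents exceeds $n$ in every configuration permitted by assumption \textbf{(P)}.
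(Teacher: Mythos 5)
Your approach is essentially the same as the paper's: rescale to a blown-up domain, approximate by the half-space integral, then apply Green's second identity in $\R^n_+$ together with the Neumann datum from \eqref{phi0} to produce the surface integral defining $\mathcal C_1$ (resp.\ $\mathcal C_2$). You supply more detail than the paper does on the passage from the rescaled domain $-\Omega_\delta$ to $\R^n_+$, splitting $\R^n_+\setminus(-\Omega_\delta)$ into a bounded piece of measure $O(\delta R^{n+1})$ and a tail controlled by the decay rates from Lemma~\ref{lemasym'} and \eqref{estphi0}--\eqref{estphi0'}; the paper's one-line "$\ =\ \delta\int_{\R^n_+}\varphi_{1,0}(-\Delta V)+o(\delta)$" silently asserts exactly this estimate, and it also restricts first to $\Omega_+$ without spelling out why the contribution from $\Omega\setminus\Omega_+$ is $o(\delta)$, which you handle in one pass.

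Two small inaccuracies, neither of which affects the conclusion. First, the chain
$-I=\int_{\R^n_+}\varphi_{1,0}(-\Delta V)$
is a sign slip: since $-\Delta V=U^q$, that right-hand side equals $I$, not $-I$. The correct bookkeeping is $\int_{\R^n_+}\varphi_{1,0}\Delta V=\int_{\partial\R^n_+}(\varphi_{1,0}\partial_\nu V-V\partial_\nu\varphi_{1,0})$, so $-I=\int_{\partial\R^n_+}(\varphi_{1,0}\partial_\nu V-V\partial_\nu\varphi_{1,0})$; the rest of your computation is then consistent and yields $I=-\mathcal C_1/2$, so your final answer is right, but the intermediate display does not equal what you wrote. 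Second, the identity you cite to control the tail in the case $p<\frac n{n-2}$ should read $(q+1)\bigl((n-2)p-2\bigr)=(p+1)n$, not $q\bigl((n-2)p-2\bigr)=(p+1)n$; the correct value is $q\bigl((n-2)p-2\bigr)=n+2(p+1)$. This is still strictly larger than $n$, which is all your integrability argument needs, so the step survives once the exponent is corrected.
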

\begin{proof}
By a change of variables and from \eqref{phi0}
\begin{align*}
&\delta^{-\frac n{q+1}+1}\int_{\Omega_+}\varphi_{1,0}(\frac{e_n-x}\delta)U_{e_n,\delta}^q
=\delta\int_{\frac{\Omega_+-e_n}{\delta}}\varphi_{1,0}(-x)U^q(x)\\
&=\delta\int_{\R^{n}_+}\varphi_{1,0}(x)(-\Delta V)dx+o(\delta)
=\delta\int_{\partial\R^{n}_+}\partial_\nu\varphi_{1,0}(x)V(x')dx'+o(\delta)\\
&=\frac\delta2\int_{\partial\R^{n}_+}|x'|U'(x)\Big|_{x_n=0}V(x')dx'+o(\delta)=-\frac{\mathcal C_1}{2}\delta+o(\delta)
\end{align*}
where $\nu$ is the exterior unitary normal on $\partial\R^n_+$.

Similarly, \begin{align*}
&\delta^{-\frac n{p+1}+1}\int_{\Omega_+}\varphi_{2,0}(\frac{e_n-x}\delta)V_{e_n,\delta}^p
=\frac\delta2\int_{\partial\R^{n}_+}|x'|V'(x)\Big|_{x_n=0}U(x')dx'+o(\delta)=-\frac{\mathcal C_2}{2}\delta+o(\delta).
\end{align*}

\end{proof}

\medskip
\begin{Lem}\label{lemb7}As $\delta\rightarrow0$,
\begin{align*}
&\delta^{-\frac n{q+1}+1}\int_\Omega\varphi_{1,0}(\frac{e_n-x}\delta)U_{-e_n,\delta}^q
=\delta^{-\frac n{q+1}+1}\int_\Omega\varphi_{1,0}(\frac{e_n+x}\delta)U_{e_n,\delta}^q=o(\delta);\\
&\delta^{-\frac n{p+1}+1}\int_\Omega\varphi_{2,0}(\frac{e_n-x}\delta)V_{-e_n,\delta}^p
=\delta^{-\frac n{p+1}+1}\int_\Omega\varphi_{2,0}(\frac{e_n+x}\delta)V_{e_n,\delta}^p=o(\delta).
\end{align*}
\end{Lem}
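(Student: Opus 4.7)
\textbf{Proof proposal for Lemma \ref{lemb7}.} My plan is to first reduce each of the four integrals to the study of a single rescaled ``bubble-interaction'' integral via symmetry and a change of variables, and then to control that integral by splitting the domain into a piece where $\varphi_{i,0}$ is small and a piece where $U^q$ (resp.\ $V^p$) is small. The equality of the two integrals on each line of the statement follows from the symmetry $x\mapsto -x$, which preserves $\Omega=B_1(0)$, together with the radial translation identities $U_{-e_n,\delta}(-x)=U_{e_n,\delta}(x)$ and $V_{-e_n,\delta}(-x)=V_{e_n,\delta}(x)$, so I only have to handle one representative, say the first.

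Performing the change of variable $y=(x+e_n)/\delta$, for which $(e_n-x)/\delta=(2e_n/\delta)-y$, and using $U_{-e_n,\delta}^q(x)=\delta^{-\frac{nq}{q+1}}U^q(y)$, the factor $\delta^{-\frac{n}{q+1}+1}$ together with the Jacobian $\delta^n$ gives exactly the overall scaling $\delta^{1}$, reducing the task to showing
\[
\int_{D_\delta}\varphi_{1,0}\!\Bigl(\tfrac{2e_n}{\delta}-y\Bigr)U^q(y)\,dy=o(1),\qquad D_\delta=\{y:|\delta y-e_n|<1\}\subset B_{2/\delta}(0).
\]
The analogous identity for the second line reduces to the same type of integral with $\varphi_{2,0}$ and $V^p$. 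I then split $D_\delta$ into $A_\delta=\{|y|\le 1/\delta\}$ and $B_\delta=D_\delta\setminus A_\delta$. On $A_\delta$ we have $|2e_n/\delta-y|\ge 1/\delta$, so by the pointwise bounds \eqref{estphi0}--\eqref{estphi0'}, $|\varphi_{i,0}|\le C\delta^{\alpha}$ with $\alpha=n-3$ if $p>\frac{n}{n-2}$ and $\alpha=p(n-2)-3$ if $p<\frac{n}{n-2}$; combining with the size of $\int_{A_\delta}U^q$ (or $\int_{A_\delta}V^p$, which is either bounded or grows like $\delta^{p(n-2)-n}$) yields a contribution of order $\delta^{\alpha}$ or $\delta^{p(n-2)-3}$. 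On $B_\delta$, the factor $U^q(y)$ (or $V^p(y)$) is controlled by $C\delta^{\beta}$ where $\beta=q(n-2)$ or $q((n-2)p-2)=2p+n+2$ (resp.\ $\beta=p(n-2)$), while substituting $z=2e_n/\delta-y$ gives $\int_{B_\delta'}|\varphi_{i,0}(z)|\,dz\le C\int_{|z|\le 2/\delta}(1+|z|)^{-\alpha}\,dz\le C\delta^{\alpha-n}$, so the contribution is of order $\delta^{\alpha+\beta-n}$.

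Both pieces are $o(1)$ provided $\alpha>0$ and $\alpha+\beta>n$ (or the analogous inequality $\alpha+p(n-2)>n$ in the $V^p$ case, i.e., $\alpha>3$--style). For $p>\frac{n}{n-2}$, $\alpha=n-3\ge 1$ since $n\ge 4$ and the various sums exceed $n$ by direct computation using the critical hyperbola. The delicate case, and what I expect to be the main obstacle, is case (ii): there one needs $p(n-2)>3$, i.e., $\alpha>0$, for $\varphi_{1,0}$ and also for $\varphi_{2,0}$ paired with $V^p$ when $p(n-2)<n$, where $\int_{|y|\le 1/\delta}V^p\sim\delta^{p(n-2)-n}$. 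This is precisely where the hypothesis $p>p_n$ in \textbf{(P)}(ii) enters: a short check against the explicit formula for $p_n$ (already noted in Remark \ref{rem'} and Remark \ref{remP'}) shows $p_n(n-2)>3$ for every $n\ge 4$, which closes the estimate. Assembling the two regions gives $\int_{D_\delta}\varphi_{i,0}(\cdots)\cdot(\text{bubble})^{q\text{ or }p}\,dy=o(1)$, and multiplying by the scaling factor $\delta$ yields the desired $o(\delta)$, completing the lemma.
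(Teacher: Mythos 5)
Your proposal is correct and follows essentially the same strategy as the paper's proof: split the domain into a region where the far bubble (resp.\ $U^q$ or $V^p$) supplies a polynomial factor of $\delta$ and a region where $\varphi_{i,0}$ does, use the algebraic decay rates from \eqref{asymV}--\eqref{asymU} and \eqref{estphi0}--\eqref{estphi0'}, and close via the critical-hyperbola identity $q((n-2)p-2)=n+2p+2$ together with the lower bound $p(n-2)>3$ guaranteed by hypothesis \textbf{(P)}(ii). The only difference is cosmetic — you rescale globally and then split at $|y|=1/\delta$, while the paper splits $\Omega$ into $\Omega_\pm$ (and the remainder) first and rescales each piece — and the resulting exponents agree with those in the paper.
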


\begin{proof}
First, for $p>\frac n{n-2}$,
\begin{align*}
&\delta^{-\frac n{q+1}+1}\int_{\Omega_+}\varphi_{1,0}(\frac{e_n-x}\delta)U_{-e_n,\delta}^q
\leq C\delta^{-\frac n{q+1}+1}\int_{\Omega_+}\varphi_{1,0}(\frac{e_n-x}\delta)\frac{\delta^{-\frac{qn}{q+1}}}{(1+\frac{|x+e_n|}{\delta})^{q(n-2)}}\\
&\leq C\delta^{-\frac n{q+1}+1+q(n-2)-\frac{qn}{q+1}}\int_{\Omega_+-e_n}\varphi_{1,0}(\frac{-x}\delta)
=C\delta^{q(n-2)+1}\int_{\frac{\Omega_+-e_n}\delta}\varphi_{1,0}(y)dy\\
&\leq C\delta^{q(n-2)+1}\int_{\frac{\Omega_+-e_n}\delta}\frac1{(1+|x|)^{n-3}}\leq C\delta^{q(n-2)+1}(1+\int_1^{\frac1{2\delta}}r^2dr)=O(\delta^{q(n-2)-2})=o(\delta).
\end{align*}
On the other hand,
\begin{align*}
&\delta^{-\frac n{q+1}+1}\int_{\Omega_-}\varphi_{1,0}(\frac{e_n-x}\delta)U_{-e_n,\delta}^q
\leq C\delta^{-\frac n{q+1}+1}\int_{\Omega_-+e_n}\varphi_{1,0}(\frac{2e_n-x}\delta)\frac{\delta^{-\frac{qn}{q+1}}}{(1+\frac{|x|}{\delta})^{q(n-2)}}\\
&\leq C\delta\int_{\frac{\Omega_-+e_n}\delta}\varphi_{1,0}(\frac{2e_n}\delta-x)\frac{1}{(1+|x|)^{q(n-2)}}
=O(\delta^{n-2})=o(\delta).
\end{align*}
So we can get
\begin{align*}
&\delta^{-\frac n{q+1}+1}\int_\Omega\varphi_{1,0}(\frac{e_n-x}\delta)U_{-e_n,\delta}^q
=\delta^{-\frac n{q+1}+1}\int_\Omega\varphi_{1,0}(\frac{e_n+x}\delta)U_{e_n,\delta}^q=o(\delta).
\end{align*}
Similarly, in this case,
\begin{align*}
&\delta^{-\frac n{p+1}+1}\int_\Omega\varphi_{2,0}(\frac{e_n-x}\delta)V_{-e_n,\delta}^p
=\delta^{-\frac n{p+1}+1}\int_\Omega\varphi_{2,0}(\frac{e_n+x}\delta)V_{e_n,\delta}^p=o(\delta).
\end{align*}

Second, for $p<\frac n{n-2}$,
\begin{align*}
&\delta^{-\frac n{q+1}+1}\int_{\Omega_+}\varphi_{1,0}(\frac{e_n-x}\delta)U_{-e_n,\delta}^q
\leq C\delta^{-\frac n{q+1}+1}\int_{\Omega_+}\varphi_{1,0}(\frac{e_n-x}\delta)
\frac{\delta^{-\frac{qn}{q+1}}}{(1+\frac{|x+e_n|}{\delta})^{q((n-2)p-2)}}\\
&\leq C\delta^{-\frac n{q+1}+1+q((n-2)p-2)-\frac{qn}{q+1}}\int_{\Omega_+-e_n}\varphi_{1,0}(\frac{-x}\delta)
=C\delta^{q((n-2)p-2)+1}\int_{\frac{\Omega_+-e_n}\delta}\varphi_{1,0}(y)dy\\
&\leq C\delta^{q((n-2)p-2)+1}\int_{\frac{\Omega_+-e_n}\delta}\frac1{(1+|x|)^{(n-2)p-3}}\leq C\delta^{q((n-2)p-2)+1}(1+\int_1^{\frac1{2\delta}}r^2dr)\\&=O(\delta^{pn}).
\end{align*}
On the other hand, since $q((n-2)p-2)>n$, we have
\begin{align*}
&\delta^{-\frac n{q+1}+1}\int_{\Omega_-}\varphi_{1,0}(\frac{e_n-x}\delta)U_{-e_n,\delta}^q
\leq C\delta^{-\frac n{q+1}+1}\int_{\Omega_-+e_n}\varphi_{1,0}(\frac{2e_n-x}\delta)\frac{\delta^{-\frac{qn}{q+1}}}{(1+\frac{|x|}{\delta})^{q(n-2)}}\\
&\leq C\delta\int_{\frac{\Omega_-+e_n}\delta}\varphi_{1,0}(\frac{2e_n}\delta-x)\frac{1}{(1+|x|)^{q((n-2)p-2)}}
=O(\delta^{(n-2)p-2}).
\end{align*}
In view of Remark \ref{remP'}, it holds that
\begin{align*}
&\delta^{-\frac n{q+1}+1}\int_\Omega\varphi_{1,0}(\frac{e_n-x}\delta)U_{-e_n,\delta}^q
=\delta^{-\frac n{q+1}+1}\int_\Omega\varphi_{1,0}(\frac{e_n+x}\delta)U_{e_n,\delta}^q=o(\delta).
\end{align*}

Similarly, in this case,
\begin{align*}
&\delta^{-\frac n{p+1}+1}\int_\Omega\varphi_{2,0}(\frac{e_n-x}\delta)V_{-e_n,\delta}^p
=\delta^{-\frac n{p+1}+1}\int_\Omega\varphi_{2,0}(\frac{e_n+x}\delta)V_{e_n,\delta}^p=o(\delta).
\end{align*}

\end{proof}
\medskip

 \noindent\textbf{Acknowledgments}
Guo was supported by NSFC grants (No.12271539,11771469).
Peng was supported by NSFC grants (No.11831009).

\end{document}